\newtheorem{theorem}{Theorem}[section]
\newtheorem{lemma}[theorem]{Lemma}
\newtheorem{proposition}[theorem]{Proposition}
\newtheorem{definition}[theorem]{Definition}
\newtheorem{corollary}[theorem]{Corollary}
\newtheorem{example}[theorem]{Example}
\newtheorem{remark}[theorem]{Remark}
\newtheorem{conjecture}[theorem]{Conjecture}
\newtheorem{question}[theorem]{Question}
\newenvironment{yt}{\begin{array}{c}\begin{ytableau}}
    {\end{ytableau}\end{array}}
\newcommand{\defin}[1]{\emph{#1}}
\newcommand{\setC}{\mathbb{C}}
\newcommand{\avec}{\mathbf{a}}
\newcommand{\bvec}{\mathbf{b}}
\newcommand{\pvec}{\mathbf{p}}
\newcommand{\qvec}{\mathbf{q}}
\newcommand{\rvec}{\mathbf{r}}
\newcommand{\macw}{w}
\newcommand{\length}{\ell}
\newcommand{\domgeq}{\trianglerighteq}
\newcommand{\domleq}{\trianglelefteq}
\newcommand{\tabHook}{\mathcal{HT}}
\newcommand{\tabPerm}{\mathcal{PT}}
\newcommand{\CD}{\mathcal{CD}}
\newcommand{\stirling}[2]{ \genfrac\{\}{0pt}{}{#1}{#2}}
\newcommand{\perms}{\mathfrak{S}}
\newcommand{\PP}[1]{\perms^{(2)}_#1}
\newcommand{\U}{\mathcal{U}}
\def\QQ{\mathbb{Q}}
\newcommand{\schurS}{S}
\newcommand{\jackJ}{J}
\newcommand{\zonalZ}{Z}
\newcommand{\monomM}{M}
\newcommand{\psump}{\mathrm{p}} 
\newcommand{\F}{F} 
\def\hatK{\hat{K}} 
\def\sh{\star}
\def\ShSchur{\schurS^\sh}
\def\shJack{\jackJ^{\sh,(\a)}}
\def\ShZonal{Z^{\sh}}
\DeclareMathOperator{\lrmin}{lrmin}
\DeclareMathOperator{\id}{id}
\DeclareMathOperator{\Ch}{\vartheta}
\DeclareMathOperator{\Ko}{\mathfrak{K}}
\DeclareMathOperator{\type}{type}
\DeclareMathOperator{\QSym}{QSym}
\def\la{\lambda}
\def\a{\alpha}
\def\Sn{\mathfrak{S}}
\def\si{\sigma}
\def\eps{\varepsilon}
\def\Cha{\Ch^{(\a)}}
\def\Koa{\Ko^{(\a)}}
\newcommand{\pst}[1]{\widetilde{\mathrm{p}_{#1}^\star}}
\renewcommand{\phi}{\varphi}
\title[Shifted symmetric functions and multirectangular coordinates]
{Shifted symmetric functions and multirectangular coordinates of Young diagrams}
\author[P.~Alexandersson]{Per Alexandersson}
\author[V.~Féray]{Valentin Féray}
\keywords{shifted symmetric functions, Jack polynomials, multirectangular coordinates,
zonal spherical functions, characters of symmetric groups.}
\subjclass[2010]{Primary: 05E05. Secondary: 20C30.}
\begin{document}

\begin{abstract}
  In this paper, we study shifted Schur functions $S_\mu^\star$,
  as well as a new family of shifted symmetric functions $\Ko_\mu$ linked to Kostka numbers.
  We prove that both are polynomials in multi-rectangular coordinates,
  with nonnegative coefficients when written in terms of falling factorials.

  We then propose a conjectural generalization to the Jack setting.
  This conjecture is a lifting of Knop and Sahi's positivity result for usual Jack polynomials
  and resembles recent conjectures of Lassalle.
  We prove our conjecture for one-part partitions.
\end{abstract}

\maketitle


\section{Introduction}

We use standard notation for partitions and symmetric functions,
which is recalled in \cref{SectPreliminaries}.

\subsection{Shifted symmetric functions}
\label{SubsectIntroShiftedSym}

Informally, a shifted symmetric function is a formal power series in
infinitely many variables
$x_1$, $x_2$, \ldots
that has bounded degree and is symmetric in the ``shifted'' variables $x_1-1$, $x_2-2$, \ldots
(a formal definition is given in \cref{SubSecDefShSym}).

Many properties of symmetric functions have natural analogue
in the shifted framework. Unlike in symmetric function theory, it is often relevant
to evaluate a shifted symmetric function $\F$ on the parts 
of a Young diagram $\la=(\la_1,\dots,\la_\ell)$.
Then we denote $\F(\la) \coloneqq F(\la_1,\dots,\la_\ell,0,0,\dots)$.
It turns out that shifted symmetric functions are determined
by their image on Young diagrams, so that
the shifted symmetric function ring will be identified with a subalgebra
of the algebra of functions on the set of all Young diagrams
(without size nor length restriction).
\medskip

Shifted symmetric functions were introduced
by Okounkov and Olshanski in \cite{OkounkovOlshanskiShiftedSchur}.
In this paper, the authors are particularly interested in the basis
of \emph{shifted Schur functions}, which can be defined as follows:
for any integer partition $\mu$ and any $n \geq 1$,
\[
\ShSchur_\mu (x_1,\dotsc,x_n)
= \frac {\det \left( (x_i+n-i)_{\mu_j+n-j} \right)}
{\det \left( (x_i+n-i)_{n-j} \right)} ,
\]
where $(x)_k$ denotes the falling factorial $x(x-1) \dotsm (x-k+1)$.
Note the similarity with the definition of Schur functions \cite[p. 40]{Macdonald1995}:
in particular, the highest degree terms of $\ShSchur_\mu$
is the Schur function $\schurS_\mu$.
Shifted Schur functions are also closely related to \emph{factorial Schur polynomials},
originally defined by Biedenharn and Louck in \cite{BiedenharnLouckFactorialSchur}
and further studied, \emph{e.g.}, in \cite{MacdonaldVariationsSchur,MolevSaganLRFactorialSchur}.
These functions display beautiful properties:
\begin{itemize}
    \item Some well-known formulas involving Schur functions 
        have a natural extension to shifted Schur functions,
        \emph{e.g.}, the combinatorial expansion in terms of
        semi-standard Young tableaux \cite[Theorem 11.1]{OkounkovOlshanskiShiftedSchur}
        and the Jacobi-Trudi identity \cite[Theorem 13.1]{OkounkovOlshanskiShiftedSchur}.
    \item The evaluation $\ShSchur_\mu(\la)$ of a shifted Schur function indexed by $\mu$
        on a Young diagram $\la$ 
        has a combinatorial meaning:
        it vanishes if $\lambda$ does not contain $\mu$ and is related to the number of standard
        Young tableaux of skew shape $\lambda / \mu$ otherwise; see \cite[Theorem 8.1]{OkounkovOlshanskiShiftedSchur}.
        Note that this beautiful property has no analogue for usual (\emph{i.e.}, non-shifted) symmetric functions.
    \item Lastly, they appear as eigenvalues of elements of well-chosen bases in highest weight modules
        for classical Lie groups, see \cite{OkounkovOlshanski1998}.\medskip
\end{itemize}

There exists another connection between representation theory and shifted symmetric functions,
that we shall explain now.
It is well-know, see, \emph{e.g.}, \cite[Section I.7]{Macdonald1995} that irreducible character
values $\chi^\la_\tau$ of the symmetric group $\Sn_n$ are indexed by two partitions of $n$:
$\la$ stands for the irreducible representation we are considering and $\tau$ is the cycle-type
of the permutation on which we are computing the character.
Let us fix a partition $\mu$ of size $k$. 
For $n \ge k$, we can add parts of size $1$ to $\mu$ to get a partition of size $n$,
that we will denote $\mu 1^{n-k}$.
Then consider the following function on Young diagrams:
\begin{equation}
\Ch_\mu(\la)= 
\begin{cases}
    |\lambda|(|\lambda|-1)\cdots (|\lambda|-|\mu|+1) \frac{\chi^\lambda_{\mu1^{|\lambda|-|\mu|}}}
    {\chi^\lambda_{1^{|\mu|} }} &\text{ if }|\lambda| \ge |\mu|;\\
    0 &\text{ if }|\lambda| < |\mu|.
\end{cases}
\label{EqDefCh1}
\end{equation}
It turns out that $\Ch_\mu$ is a shifted symmetric function 
\cite[Proposition 3]{KerovOlshanskiPolFunc}.
The family of \emph{normalized characters} $\Ch_\mu$ is an important
tool in asymptotic representation theory: in particular they are
central objects in the description by Ivanov, Kerov and Olshanski
of the fluctuations of random Young diagrams distributed with Plancherel
measures \cite{IvanovOlshanski2002}.
Finding combinatorial or analytic expressions for $\Ch_\mu(\la)$ in terms
of various set of coordinates of the Young diagrams
has been the goal of many research papers 
\cite{Biane2003,GouldenRattan2007,SniadyGenusExpansion,RattanStanleyTopDegree,
FerayPreuveStanley,FerayPreuveKerov,NousKerovExplicitInterpretation,PetrulloSenatoKerov}.
\bigskip

In this paper we will consider a third family
of shifted symmetric functions.
Following Macdonald \cite[page 73]{Macdonald1995},
we denote $K^{\la}_{\tau}$ the number of semi-standard Young tableaux
of shape $\la$ and type $\tau$, often called Kostka number.
Equivalently, $(K^{\la}_{\tau})_{\tau \vdash |\lambda|}$ is the family of coefficients
of the monomial expansion of Schur functions
\begin{equation}
\schurS_\la = \sum_{\tau \vdash |\lambda|} K^{\la}_{\tau} \monomM_\tau.
\label{EqSchurInMon}
\end{equation}
By analogy with \cref{EqDefCh1} (recall that irreducible character values of $\Sn_n$
are roughly the coefficients of the power-sum expansion of Schur polynomial),
we define
\begin{equation}\label{EqDefKo1}
\Ko_\mu(\lambda) =
\begin{cases}
n(n-1)\dotsm (n-k+1) \dfrac{  K^\lambda_{{\mu1^{n-k}} } }{ K^\lambda_{1^n}   } \text{ if } n \geq k \\
0 \text{ otherwise}.
\end{cases}
\end{equation}
It is easy to show that $\Ko_\mu$ is also a shifted symmetric function
(in fact it is easy to express it in terms of $\Ch_\mu$, see \cref{PropKoaOnCha}).
Note that, for any diagram $\la$, the quantity $\Ko_\mu(\la)$ is non-negative.
We will be interested in non-negativity properties 
of the function $\Ko_\mu$ itself (and not only its specializations).

\subsection{Multirectangular coordinates}
In a beautiful paper \cite{Stanley2003}, Stanley proved that the function
$\Ch_\mu$ evaluated on a rectangular Young diagram $\la=(r^p)=(r,\dots,r)$ (with $p$ parts),
has a very nice expression
\begin{equation}
    \Ch_\mu \big( (r^p) \big)
= (-1)^{|\mu|} \sum_{\substack{ \si, \tau \in \Sn_{|\mu|} \\ \si \, \tau = \pi }} p^{|C(\si)|} (-r)^{|C(\tau)|},
\label{EqChRectangle}
\end{equation}
where $\Sn_{|\mu|}$ is the symmetric group of size $|\mu|$,
$\pi$ is a fixed permutation of cycle-type $\mu$
and $C(\si)$ and $C(\tau)$ the set of cycles of $\si$ and $\tau$, respectively.

In order to generalize this formula, Stanley introduced the notion
of multirectangular coordinates\footnote{Beware that the definition
of multirectangular coordinates that we use here is slightly different than Stanley's.
To recover Stanley's from ours, just set $q_i=r_i+\dots+r_d$.}:
to two lists of non-negative integers $\pvec=(p_1,\dots,p_d)$ and
$\rvec=(r_1,\dots,r_d)$, we associate the Young diagram drawn in \cref{FigDiagMulti}.
We denote it $\rvec^\pvec$.

\begin{figure}[t]
    \centering
	\begin{tikzpicture}[scale=0.35,y=-1cm]
	\draw[black] (0, 0)--(0,6);
	\draw[black] (2, 0)--(2,6);
	\draw[black] (6, 0)--(6,4);
	\draw[black] (8,0) --(8,2);
	\draw[<->]  (-0.5, 0)--(-0.5, 2);
	\draw[<->]  (-0.5, 2)--(-0.5, 4);
	\draw[<->]  (-0.5, 4)--(-0.5, 6);
	\draw[black] (0, 0)--(8,0);
	\draw[black] (0, 2)--(8,2);
	\draw[black] (0, 4)--(6,4);
	\draw[black] (0, 6)--(2,6);
	\draw[<->] (0 , -0.5)--(2, -0.5);
	\draw[<->] (2 , -0.5)--(6, -0.5);
	\draw[<->] (6 , -0.5)--(8, -0.5);
	\node at (-1 , 1.0) {$p_1$};
	\node at (-1 , 3.0) {$p_2$};
	\node at (-1 , 5.0) {$p_3$};
	\node at ( 1 , -1) {$r_3$};
	\node at ( 4 , -1) {$r_2$};
	\node at ( 7 , -1) {$r_1$};
	\end{tikzpicture}
    \caption{Multirectangular coordinates of a Young diagram}
    \label{FigDiagMulti}
\end{figure}
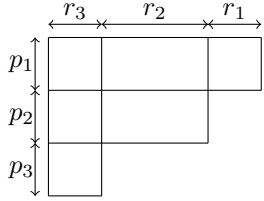

Throughout the article, we will consider $d$ as a fixed positive integer,
so that we consider diagrams which are superpositions of a given number of rectangles.
However, any diagram has such a description for some value of $d$,
thus our formulas, \emph{e.g.}, \cref{EqShSchurN,EqKoN},
give in effect  the evaluation of shifted symmetric functions on any diagram.

Stanley conjectured the following positivity property,
proved later in \cite{FerayPreuveStanley} 
(a simpler proof has been given shortly after in \cite{NousBoundsOnCharacters}).

\begin{theorem}[\cite{FerayPreuveStanley}]
    For each $i$ between $1$ and $d$, set $q_i=r_i+\dotsb+r_d$.
    Then, for every partition $\mu$, the quantity $(-1)^{|\mu|} \Ch_\mu(\rvec^\pvec)$ is a polynomial with non-negative integer
    coefficients in the variables $p_1, \dotsc, p_d$, $-q_1, \dotsc, -q_d$.
    \label{ThmPositivityCh1}
\end{theorem}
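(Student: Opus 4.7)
The plan is to refine Stanley's rectangular formula \eqref{EqChRectangle} to the multirectangular setting, producing a signed sum that, after regrouping terms, becomes a sum of monomials with non-negative integer coefficients in $p_1, \ldots, p_d, -q_1, \ldots, -q_d$.

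First, I would seek a refined formula of the shape
\[
\Ch_\mu(\rvec^\pvec) = (-1)^{|\mu|}
\sum_{\sigma \tau = \pi} \; \sum_{(f,g)} \; \prod_{c\in C(\sigma)} p_{f(c)} \prod_{c'\in C(\tau)} (-r_{g(c')}),
\]
where $\pi$ is a fixed permutation of cycle-type $\mu$, $f \colon C(\sigma) \to [d]$ and $g \colon C(\tau) \to [d]$ label the cycles of $\sigma$ and $\tau$, and the inner sum runs over pairs $(f,g)$ satisfying a compatibility condition dictated by the staircase shape of $\rvec^\pvec$ (roughly: $g(c') \le f(c)$ whenever the cycles $c$ and $c'$ share an element). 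Such a formula recovers \eqref{EqChRectangle} for $d=1$ and reflects the decomposition of $\rvec^\pvec$ into $d$ nested rectangles.

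Once this refinement is in hand, I would perform the summation over $g$ first. Since the compatibility condition couples $g(c')$ only with the labels $f(c)$ at the cycles $c \in C(\sigma)$ meeting $c'$, the inner sum over $g$ factorizes across cycles of $\tau$, each contributing a factor of the form $-\sum_{j \le m(c')} r_j = -q_{m'(c')}$ for an appropriate index $m'(c')$. After this telescoping, every term of the expansion is manifestly a product of $p_i$'s and $(-q_j)$'s, with the sign $(-1)^{|\mu|}$ absorbed into the $(-r_j)$'s that become $(-q_j)$'s. The coefficient of each monomial then counts triples $(\sigma, \tau, f)$ with $\sigma\tau = \pi$ and a constrained labeling, and is in particular a non-negative integer.

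The main obstacle is Step~1: establishing the refined multirectangular formula. One direct route is to iterate Stanley's single-rectangle argument while ``stacking'' rectangles and tracking how each additional rectangle extends the permutation sum. A more conceptual route, which I would attempt first, is to encode pairs $(\sigma, \tau)$ with $\sigma\tau = \pi$ as bipartite graphs embedded on surfaces whose black (resp.\ white) vertices correspond to cycles of $\sigma$ (resp.\ $\tau$); the labelings $(f,g)$ then describe colorings of the two vertex classes, and the staircase compatibility condition is exactly the combinatorial requirement that the map embeds into the diagram $\rvec^\pvec$. Once it is shown that $\Ch_\mu(\rvec^\pvec)$ enumerates such embeddings with the appropriate signs, Steps~2 and~3 are essentially bookkeeping, and the positivity in the variables $p_i, -q_j$ becomes a transparent consequence of the combinatorial interpretation.
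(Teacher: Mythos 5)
Your Steps 2 and 3 are sound, and they coincide exactly with how the paper extracts positivity: granted the combinatorial formula $\Ch_\mu(\rvec^\pvec)=\sum_{\sigma\tau=\pi_\mu}\eps(\tau)\,N_{\sigma,\tau}(\rvec,\pvec)$ of \cref{EqChN}, one passes to Stanley's coordinates as in \cref{RkMax}; the sum over the labels of the $\tau$-cycles factorizes and telescopes to a product of $q$'s, the signs $(-1)^{|\mu|}\eps(\tau)=(-1)^{|C(\tau)|}$ are absorbed into the factors $-q_j$, and each surviving term is a monomial in $p_1,\dotsc,p_d,-q_1,\dotsc,-q_d$ with coefficient $1$, the coefficient of a given monomial counting triples $(\sigma,\tau,f)$ as you say. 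One bookkeeping slip: with the theorem's convention $q_i=r_i+\dotsb+r_d$ and the geometry of \cref{FigDiagMulti} (the row block labelled $i$ meets the column block labelled $j$ only when $j\ge i$), the compatibility must read $g(c')\ge f(c)$ on intersecting cycles, the reverse of what you wrote; only then does the inner sum over $g$ produce $r_m+\dotsb+r_d=q_m$, whereas your inequality yields prefix sums $r_1+\dotsb+r_m$, which are not among the $q_i$ of the statement.

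The genuine gap is your Step 1. The ``refined multirectangular formula'' you posit is precisely \cref{EqChN} (your signed version is equivalent to it, since $(-1)^{|\mu|}\prod_{c'\in C(\tau)}(-r_{g(c')})=\eps(\tau)\prod_{c'}r_{g(c')}$), i.e., Stanley's conjectured formula \cite{Stanley-preprint2006}, and it is the entire substance of \cref{ThmPositivityCh1}: it is the main result of \cite{FerayPreuveStanley}, with a shorter proof in \cite{NousBoundsOnCharacters}, and the present paper does not reprove it but cites it (the remaining ingredients, polynomiality via \cref{CorolShiftSymPoly} and integrality due to Stanley, are comparatively easy). Your two proposed routes are only gestures: ``stacking'' rectangles does not reduce the general case to \cref{EqChRectangle}, since the $d=1$ proof exploits special features of rectangular shapes and no induction on the number of rectangles is available; and while the bipartite-maps-on-surfaces picture is indeed the language of the known proofs, showing that $\Ch_\mu(\rvec^\pvec)$ enumerates those signed embeddings is exactly the hard theorem, not a consequence of the encoding. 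As written, your argument reduces the statement to its only nontrivial ingredient and leaves it unproven; to be complete you must either supply a proof of \cref{EqChN} or invoke \cite{FerayPreuveStanley} (or \cite{NousBoundsOnCharacters}) for it, as the paper does.
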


Polynomiality in the statement above is an easy consequence of 
the shifted symmetry of $\Ch_\mu$; see \cref{CorolShiftSymPoly}.
Integrality of coefficients is a bit harder,
but was established by Stanley in the paper where he stated the conjecture.
The most interesting part is the non-negativity,
which is established 
by finding a combinatorial interpretation,
which was also conjectured by Stanley \cite{Stanley-preprint2006}.
This combinatorial formula is presented later in this paper in \cref{EqChN}.
\bigskip

We now consider the following question:
do the expressions of other families of shifted symmetric functions,
namely $(\ShSchur_\mu)$ and $(\Ko_\mu)$, in terms of multirectangular coordinates
also display some positivity property?
We know that, for any diagram $\lambda$,
both quantities $\ShSchur_\mu(\lambda)$ and $\Ko_\mu(\lambda)$ are non-negative
(for the latter, it is obvious from the definition; 
for the former, see \cite[Theorem 8.1]{OkounkovOlshanskiShiftedSchur}).
In other terms, when we specialize $p_1$, \dots, $p_d$, $r_1$, \dots, $r_d$
to non-negative integer values, then the polynomial expressions
$\ShSchur_\mu(\rvec^\pvec)$ and $\Ko_\mu(\rvec^\pvec)$ specialize to non-negative values.

A first guess would be that these polynomials have non-negative coefficients,
but this is not the case.
A natural basis of the polynomial ring $\QQ[\pvec,\rvec]$ (other than monomials),
whose elements 
have the non-negative specialization property described above,
is the falling factorial basis
\[
\bigg( (p_1)_{a_1} \cdots (p_d)_{a_d} (r_1)_{b_1} \cdots (r_d)_{b_d} \bigg)_{a_1,\dots,a_d,b_1,\dots,b_d \ge 0}.
\]
Our first main result is the following:
\begin{theorem}[First main result]
    \label{thm:mainshifted}
For every partition $\mu$,
the polynomials  $\ShSchur_\mu(\rvec^\pvec)$ and $\Ko_\mu(\rvec^\pvec)$
have non-negative rational coefficients in the falling factorial basis.
\end{theorem}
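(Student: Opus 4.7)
My strategy is in two parts: reduce the $\Ko_\mu$ case to the $\ShSchur_\mu$ case, then handle the shifted Schur case via the Okounkov--Olshanski tableau formula re-organized by the block structure of $\rvec^\pvec$.

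\textbf{Reduction $\Ko_\mu \to \ShSchur_\mu$.} Splitting a semi-standard tableau of shape $\lambda$ and content $\mu\, 1^{n-k}$ at the boundary between its ``$\mu$-part'' (filling a sub-shape $\nu \subseteq \lambda$ with $|\nu| = k$) and its standard extension on $\lambda/\nu$ gives the classical identity
\[
K^\lambda_{\mu 1^{n-k}} \;=\; \sum_{\nu \vdash k,\; \nu \subseteq \lambda} K^\nu_\mu \, f^{\lambda/\nu}.
\]
Combined with the Okounkov--Olshanski evaluation $\ShSchur_\nu(\lambda) = (|\lambda|)_{|\nu|}\, f^{\lambda/\nu}/f^\lambda$ from \cite[Theorem 8.1]{OkounkovOlshanskiShiftedSchur}, this lifts to the shifted-symmetric identity $\Ko_\mu = \sum_{\nu \vdash |\mu|} K^\nu_\mu\, \ShSchur_\nu$. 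Since $K^\nu_\mu \ge 0$, non-negativity in the falling factorial basis transfers from each $\ShSchur_\nu(\rvec^\pvec)$ to $\Ko_\mu(\rvec^\pvec)$, and I can focus entirely on proving the statement for $\ShSchur_\mu$.

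\textbf{Block decomposition of the tableau sum.} I start from the Okounkov--Olshanski tableau formula \cite[Theorem 11.1]{OkounkovOlshanskiShiftedSchur}, which writes $\ShSchur_\mu(\lambda)$ as a sum over reverse semi-standard tableaux $T$ of shape $\mu$ of a product, over cells $(i,j)$, of linear factors involving $\lambda_{T(i,j)}$, $T(i,j)$ and the column index $j$. For $\lambda = \rvec^\pvec$, set $P_i := p_1 + \cdots + p_i$ and $R_j := r_j + \cdots + r_d$; each row index $k$ belongs to a unique block $b(k) \in [d]$ with $P_{b(k)-1} < k \le P_{b(k)}$ and satisfies $\lambda_k = R_{b(k)}$. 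I regroup tableaux by their induced \emph{block tableau} $B(i,j) := b(T(i,j))$, which is itself a reverse plane partition of shape $\mu$ with entries in $[d]$. For each fixed $B$ the remaining sum factors over blocks: inside block $b$, the intra-block values $\tilde T := T - P_{b-1}$ lie in $[p_b]$ and inherit the monotonicity of $T$ on $B^{-1}(b)$, while each cell factor becomes an affine expression in $R_b$, $P_{b-1}$, the column index $j$, and $\tilde T(i,j)$.

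\textbf{Intra-block sums and extraction of falling factorials.} The crux is to show that, for each block $b$, the intra-block sum of the product of cell factors over $\tilde T$ expands with non-negative rational coefficients in the basis $\{(p_b)_a \prod_c (r_c)_{b_c}\}$; multiplying these contributions across all blocks and summing over $B$ then yields the desired expansion of $\ShSchur_\mu(\rvec^\pvec)$. The main obstacle is that each cell factor couples the $r$-variables (through $R_b$) with the $p$-variables (through $P_{b-1}$), so a naive monomial expansion introduces signs that must cancel. My plan is to split each cell factor into an ``$r$-part'' $R_b$ and a ``combinatorial part'' $-(P_{b-1} + \tilde T(i,j) - j)$, expand the product accordingly, and argue that the $\tilde T$-sum on the combinatorial side collapses to a product of falling factorials in $p_b$ by a standard count of monotone fillings; the surviving polynomial in $R_b$ then re-expands in the $(r_c)_{b_c}$ basis with non-negative coefficients. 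To calibrate this bookkeeping I would first settle the single-rectangle case $d = 1$, where $P_{b-1} = 0$ and the dependence on $r$ is cleanest, and then lift to general $d$ either directly block-by-block or by an induction that builds $\rvec^\pvec$ one rectangle at a time.
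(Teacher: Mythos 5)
Your reduction of $\Ko_\mu$ to $\ShSchur_\mu$ via $\Ko_\mu=\sum_{\nu\vdash|\mu|}K^\nu_\mu\,\ShSchur_\nu$ is correct and is exactly how the paper disposes of $\Ko_\mu$ (\cref{PropKoOnShSchur}); nothing to object to there. The problem is the shifted Schur half: what you give is a plan, not a proof, and the one step carrying all the difficulty is precisely the step you leave open. After regrouping the Okounkov--Olshanski tableau sum by block tableaux $B$, the contribution of a fixed $B$ is a count of monotone intra-block fillings (which is indeed nonnegative in the $(p_b)_a$'s) multiplied by a product of cell factors of the form $R_b-c$ with $c\ge 1$; such a factor already has a negative coefficient in the falling factorial basis, since $R_b-1=(r_b)_1+\dots+(r_d)_1-1$. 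So nonnegativity can only come from cancellations, either inside a fixed-$B$ term or across different $B$'s, and your block-by-block factorization does not organize them: you write that you would ``argue that the $\tilde T$-sum collapses'' and that you would ``first settle the single-rectangle case $d=1$'', i.e.\ both the base case and the lift to general $d$ are missing. (The case $d=1$ is genuinely easy along your lines, because for a rectangle every cell factor is independent of $T$ and the whole sum is a tableau count times a product of consecutive integer shifts of $r$; it is the interaction between blocks --- the coupling of $R_b$ with the contents and with $P_{b-1}$ --- that is the entire content of the theorem.)

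This is not a bureaucratic complaint, because the positivity is delicate. The paper does not go through the tableau formula at all: it starts from the Stanley--F\'eray combinatorial formula for $\Ch_\mu$ in terms of pairs of permutations and deduces \cref{EqShSchurN} (\cref{combFormulasInFerayN}), passes to the falling factorial basis by refining set-partitions (\cref{LemAFF}), and then proves nonnegativity of the resulting coefficients $B^\mu_{S,T}$ by a representation-theoretic argument with quasi-idempotents in $\setC[\Sn_k]$ (\cref{lem:nonnegsum}), using crucially that $X_\mu$ is central. The paper also records (\cref{question:bad}) that a closely analogous, equally natural signed sum fails to be nonnegative, which is a warning that an appeal to a ``standard count of monotone fillings'' cannot be expected to produce the cancellation for free. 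Until you actually prove your intra-block (or cross-block) positivity claim --- which, if true, would be an interesting alternative, more elementary route --- the shifted Schur half of the statement, and hence the theorem, remains unproven in your write-up.
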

This Theorem is proved in \cref{SectAlpha1}.
\smallskip

We end this section by a short discussion on the motivations
beyond multirectangular coordinates.
\begin{itemize}
  \item Take a diagram $\lambda$ and replace each box by a square
    of $s \times s$ boxes. The resulting diagram is denoted $s\cdot \la$
    and is called a {\em dilation} of $\la$.
    This results in multiplying all multirectangular coordinates by $s$.
    This makes multirectangular coordinates suited to study asymptotics
    of functions $f(\la)$ on Young diagrams in the {\em balanced case}
    ({\em i.e.} when the number of rows and columns of $\la$ both grow as $\sqrt{n}$);
    see \cite{NousBoundsOnCharacters}.
  \item The polynomials obtained by writing shifted symmetric functions
    in terms of multirectangular coordinates display some interesting symmetry structure.
    It is easy to see that they are {\em diagonally quasisymmetric} in $\pvec$ and $\rvec$.
    A finer analysis shows that the algebra of functions on Young diagrams that are polynomial
    in multirectangular coordinates turns out to be isomorphic to the quasisymmetric
    function ring $\QSym$, see \cite{FerayEtJeans}.
  \item As shown above (\cref{ThmPositivityCh1}),
    nice combinatorial formulas for normalized character
    values in terms of multirectangular coordinates
    have been found.
    This formula makes a new connection between
    symmetric group characters 
    and maps combinatorics.
    Thanks to this connection, characters on permutations
    of type $(k,1^{n-k})$ in terms of multirectangular coordinates
    can be computed efficiently \cite[Section 4]{NousUnicellulaire}.
  \item A similar formula have been found for the so-called
    zonal spherical functions of the Gelfand pair $(\Sn_{2k},H_k)$
    ($H_k$ is the hyperoctahedral group; see \cref{ssec:zonal_bakground}
    for a short account on these objects).
    A conjecture has been made by Lassalle \cite{LassalleConjecturePQ} to extend this to 
    {\em Jack characters}; we will discuss it in the next Section.
    A motivation for the current work is 
    an attempt to better understand this conjecture.
  \item Finally note that, if we set $p_i=1$ for each $i$,
    then we have $r_i=\la_i-\la_{i+1}$, so that
    multirectangular coordinates contain a very simple transformation
    of the parts of the Young diagram.
    Therefore \cref{thm:mainshifted} contains the following statement 
    in terms of the parts of the Young diagrams:
    the functions $\ShSchur_\mu(\la)$ and $\Ko_\mu(\la)$
    restricted to diagrams $\la$ with at most $\ell$ parts
    are polynomials
    in $\la_1,\dots,\la_\ell$ with nonnegative coefficients
    in the basis
    \[
    \bigg( (\la_1-\la_2)_{b_1} \cdots (\la_{\ell-1}-\la_\ell)_{b_\ell} (\la_\ell)_{b_\ell} \bigg)_{b_1,\dots,b_\ell \ge 0}.
    \]
    Results and conjectures of the next sections also
    have easy (conjectural) consequences in terms of parts of the partition,
    that we shall not write down.
\end{itemize}

\subsection{Jack analogues}
\label{SubsectIntroJack}
In a seminal paper \cite{Jack1970/1971},
Jack introduced a family of symmetric functions
$J^{(\a)}_\mu$ depending on an additional parameter
$\a$. These functions are now called \emph{Jack polynomials}.
They have been extensively studied from an algebraic combinatorics point of view;
see, \emph{e.g.}, \cite{Stanley1989}, \cite[Section VI.10]{Macdonald1995} and \cite{KnopSahiCombinatoricsJack}.
For $\a=1$, Jack polynomials coincide with Schur polynomials (up to multiplication by a scalar).
On the other hand, they are degenerate cases of Macdonald polynomials, 
when both parameters $q$ and $t$ tend to $1$ with $q=t^\a$.

We will also consider $\a$-shifted symmetric functions,
which are formal power series in infinitely many variables 
$x_1$, $x_2$, \ldots
that have bounded degree and are symmetric in $x_1-1/\a$, $x_2-2/\a$, \ldots
(a formal definition is given in \cref{SubSecDefShSym}).
While there is a trivial isomorphism between $1$-shifted symmetric functions
and $\a$-shifted symmetric functions 
\cite[Remark I.7]{OkounkovOlshanskiShiftedSchur},
when dealing with Jack polynomials,
it is more convenient to work with the $\a$-version.

In this setting,
Okounkov and Olshanski have defined and studied \emph{shifted Jack polynomials},
that we shall denote here by $\jackJ^{\sh,(\a)}_\mu$.
\begin{itemize}
    \item On the one hand, the function $\jackJ^{\sh,(1)}_\mu$
is a multiple of the shifted Schur function $\schurS^\sh_\mu$.
\item On the other hand, the top degree component of $\jackJ^{\sh,(\a)}_\mu$
    is the usual Jack polynomial $\jackJ_\mu$.
\end{itemize}
Besides, they admit a combinatorial description in terms of tableaux
\cite[Equation (2.4)]{OkounkovOlshanskiShiftedJack},
can be characterized by nice vanishing conditions
\cite{KnopSahiCombinatoricsJack}
and
appear in some binomial formulas for Jack polynomials \cite{LassalleBinomiauxGeneralises}.
All these properties make them natural extensions of shifted Schur functions,
that are worth being investigated.
\bigskip

Another family of $\a$-shifted symmetric functions,
which is a natural extension of $\Ch_\mu$,
has been recently introduced and studied by Lassalle \cite{LassalleConjecturePQ,LassalleFreeCumulants}.
Expanding Jack polynomials $J_\la^{(\a)}$ 
in power-sum symmetric function basis, 
we define the coefficients 
$\theta_{\tau}^{(\a)}(\la)$ by:
\begin{equation}
\label{eq:jack-characters}
J_\la^{(\a)}=\sum_{\substack{\tau: \\
|\tau|=|\la|}} 
\theta_{\tau}^{(\a)}(\la)\ \psump_{\tau}.
\end{equation}
Then, for a fixed partition $\mu$, we consider the following function on Young diagrams:
\begin{equation}
\label{EqDefChA}
\Ch_{\mu}^{(\a)}(\la)=
\begin{cases}
\binom{|\la|-|\mu|+m_1(\mu)}{m_1(\mu)}
\ z_\mu \ \theta^{(\a)}_{\mu,1^{|\la|-|\mu|}}(\la)
&\text{if }|\la| \ge |\mu| ;\\
0 & \text{if }|\la| < |\mu|.
\end{cases}
\end{equation}
Here, $z_\mu$ denotes the standard quantity $1^{m_1}m_1!2^{m_2}m_2!\dotsm$ if $m_i=m_i(\mu)$ is the number of parts
of $\mu$ of size $i$. When $\a=1$, 
the function $\Ch_{\mu}^{(1)}$ coincide with $\Ch_\mu$ \cite[Eq. (1.1)]{LassalleConjecturePQ}.
The $\a$-shifted symmetry of $\Ch_{\mu}^{(\a)}$ is non-trivial,
see \cite[Proposition 2]{LassalleConjecturePQ}.
\bigskip

The family $\Ko_\mu$ that we introduced in \cref{SubsectIntroShiftedSym} also has a natural
analogue for a general parameter $\a$.
Consider the monomial expansion of Jack polynomials:
\[\jackJ^{(\a)}_\la = \sum_{\tau \vdash |\lambda|} \hatK^{\la,(\a)}_{\tau} \monomM_\tau.\]
Then define by analogy with \cref{EqDefKo1}
(recall that $\hatK^{\la,(\a)}_{(1^{|\la|})}=n!$ for all partitions $\la$)
\begin{equation}\label{EqDefKoA}
    \Ko^{(\a)}_\mu(\lambda) =
\begin{cases}
    \frac{1}{(n-k)!} \, \hatK^{\la,(\a)}_{{\mu1^{n-k}} } & \text{ if } n \geq k \\
0 &\text{ otherwise.}
\end{cases}
\end{equation}
We will see in \cref{SubsectDefJack} that $\Ko^{(1)}_\mu=\Ko_\mu$.
(As for $\Ch_{\mu}^{(1)}=\Ch_\mu$, there is nothing deep or surprising in this specialization,
only checking that the normalization factors coincide needs a bit of care.)
\bigskip

As in the case $\a=1$, shifted symmetry implies a polynomial dependence
in multirectangular coordinates (see \cref{CorolShiftSymPoly};
the coefficients here are \emph{a priori}
rational functions in $\a$)
and one can investigate this expression.
In this direction, M.~Lassalle has formulated a
conjecture generalizing \cref{ThmPositivityCh1}.
\begin{conjecture}[\cite{LassalleConjecturePQ}]
    For each $i$ between $1$ and $d$, set $q_i=r_i+\dots+r_d$.
    Then, for every partition $\mu$, the quantity $(-1)^{|\mu|} \Ch^{(\a)}_\mu(\rvec^\pvec)$ is a polynomial with non-negative integer
    coefficients in the variables $\alpha-1$, $p_1$, \dots, $p_d$, $-q_1$, \dots, $-q_d$.
    \label{ConjLassallePQ}
\end{conjecture}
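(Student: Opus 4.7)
The plan is to mimic the strategy that established \cref{ThmPositivityCh1} in the $\alpha=1$ case and to lift it to general $\alpha$. The easy part is structural: polynomiality of $(-1)^{|\mu|}\Cha_\mu(\rvec^\pvec)$ in $p_1,\dots,p_d,q_1,\dots,q_d$ with coefficients in $\QQ(\alpha)$ is a direct consequence of the $\alpha$-shifted symmetry of $\Cha_\mu$, via the same mechanism as in \cref{CorolShiftSymPoly}. Integrality of these coefficients as polynomials in $\alpha$ is more subtle but should follow from Lassalle's explicit formulas for Jack characters, combined with the triangular change of basis between the power-sum and monomial expansions of Jack polynomials.

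For non-negativity, the guiding strategy is to exhibit a \emph{weighted combinatorial model} that specializes, at $\alpha=1$, to Stanley's formula \cref{EqChRectangle} and to its multi-rectangle extension given later as formula \eqref{EqChN}. Concretely, one would seek an identity of the form
\[
(-1)^{|\mu|}\Cha_\mu(\rvec^\pvec)
= \sum_{M\in\mathcal{F}_\mu} (\alpha-1)^{e(M)} \prod_{i=1}^{d} p_i^{a_i(M)}\,(-q_i)^{b_i(M)},
\]
where $\mathcal{F}_\mu$ is a family of ``decorated'' objects (non-orientable maps, labelled bipartite graphs equipped with a measure of non-orientability, coloured pairs of matchings, etc.) that refines the pairs of permutations $(\sigma,\tau)$ with $\sigma\tau=\pi$ appearing for $\alpha=1$. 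Such a model would fit naturally into the framework of the Matchings--Jack and Goulden--Jackson $b$-conjectures.

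Because the full statement is out of reach of current techniques, the concrete goal is the one-part case $\mu=(k)$, which is what the present paper actually settles. For $\mu=(k)$, the Jack character $\Cha_{(k)}$ can be computed explicitly via a Sekiguchi--Debiard eigenoperator, or via Lassalle's closed formulas on rectangular diagrams; on a single rectangle $(r^p)$ this produces a binomial-type sum in $\alpha$, $p$, $r$. One then rewrites this sum in the falling-factorial basis $\bigl((p)_a(r)_b\bigr)_{a,b\ge 0}$ and verifies by a direct binomial identity that the resulting coefficients are non-negative polynomials in $\alpha-1$. The multi-rectangle case can be reduced to the single-rectangle one by a recursion that stacks an additional rectangle on top: on the shifted-Jack side this corresponds to an explicit shift operator whose action on the falling-factorial basis preserves the required positivity.

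The main obstacle is positivity in $\alpha-1$ for general $\mu$: no combinatorial interpretation of Jack characters with weights in $\alpha-1$ is currently known, and this is essentially the content of Lassalle's wider conjectural program. A realistic outcome is therefore the one-part case via the analytic/algebraic route sketched above, together with partial structural evidence for the full conjecture (polynomiality, integrality in $\alpha$, and the correct specialization at $\alpha=1$, which reduces to \cref{ThmPositivityCh1}).
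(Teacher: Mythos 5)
The statement you are trying to prove is \cref{ConjLassallePQ}, which the paper records as an \emph{open conjecture} of Lassalle; the paper offers no proof of it, and your proposal does not close that gap. The structural part of your argument is fine and matches what the paper itself observes: $\a$-shifted symmetry of $\Cha_\mu$ gives polynomiality of $\Cha_\mu(\rvec^\pvec)$ in $p_1,\dots,p_d,q_1,\dots,q_d$ with coefficients in $\QQ(\a)$, exactly via \cref{CorolShiftSymPoly}. But everything beyond that is either circular or unsubstantiated. The core of your plan is to ``exhibit a weighted combinatorial model'' with weights in $\a-1$ refining \cref{EqChN}; no such model is known for general $\a$, and producing one is essentially the content of Lassalle's conjecture itself (this is the open ``dual approach'' program the paper discusses, cf.\ the Matchings-Jack and $b$-conjecture literature). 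Asserting that such a family $\mathcal{F}_\mu$ ``would'' exist is not a proof step, and likewise the integrality claim (``should follow from Lassalle's explicit formulas'') is not backed by an argument.

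Your fallback, the one-part case, also does not deliver what you claim. The paper's one-part result, \cref{ThmOnePart}, concerns $\shJack_{(k)}=k!\,\Koa_{(k)}$ and asserts positivity in the \emph{$\a$-falling-factorial basis} in $(p_i)$ and $(r_i)$; it is a case of the authors' own \cref{ConjMain}, not of \cref{ConjLassallePQ}, which is about the different family $\Cha_\mu$ and about monomial positivity in the variables $\a-1$, $p_i$, $-q_i$. Neither positivity statement is shown (in the paper or in your sketch) to imply the other, so ``the one-part case via the analytic/algebraic route'' would not prove any case of the statement under review even if carried out. Moreover, the two concrete mechanisms you invoke there --- a Sekiguchi--Debiard eigenoperator computation on a rectangle whose falling-factorial rewriting is ``verified by a direct binomial identity,'' and a ``shift operator'' recursion stacking rectangles that ``preserves the required positivity'' --- are named but never constructed; the second in particular is exactly the kind of positivity-preserving reduction that nobody knows how to make for Jack characters, which is why the conjecture is open. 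In short: the proposal is a research program, not a proof, and its only rigorously supported ingredient (polynomiality) is the part the paper already notes is easy.
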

We also formulate a conjecture for general $\a$.
Consider the ring of polynomials in variables $\a$, $p_1,\dots,p_d$, $r_1,\dots,r_d$
and let the following basis
\[
\bigg( \a^c (p_1)_{a_1} \cdots (p_d)_{a_d} (r_1)_{b_1} \cdots (r_d)_{b_d} \bigg)_{c,a_1,\dots,a_d,b_1,\dots,b_d \geq 0}.
\]
be the \emph{$\a$-falling factorial basis} of this ring.
\begin{conjecture} \label{ConjMain}
For every partition $\mu$,
the quantities  $\a^{|\mu|-\mu_1} \, \shJack_\mu(\rvec^\pvec)$ and $\Ko^{(\a)}_\mu(\rvec^\pvec)$
are polynomials with non-negative rational
coefficients in the $\a$-falling factorial basis.
\end{conjecture}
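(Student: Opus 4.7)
My plan is to mirror the strategy used to establish the $\a=1$ case (\cref{thm:mainshifted}) and try to lift both parts of the statement to arbitrary $\a$, keeping the one-part case $\mu=(k)$ as the only one I would expect to settle unconditionally.

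For the shifted Jack part $\a^{|\mu|-\mu_1}\,\shJack_\mu(\rvec^\pvec)$, I would begin with the Okounkov--Olshanski tableau formula for $\shJack_\mu(x_1,x_2,\dotsc)$ as a sum over reverse tableaux of shape $\mu$, then specialize the variables $x_i$ to the staircase coming from $(\pvec,\rvec)$. Each reverse tableau contributes a product that depends only on $\a$ and on which rectangle each cell's value lies in. The natural idea is to partition reverse tableaux into equivalence classes by this ``landing pattern'' and to show that the aggregate contribution of each class factors as a polynomial in $\a$ with non-negative rational coefficients, times a product $(p_1)_{a_1}\cdots (p_d)_{a_d}(r_1)_{b_1}\cdots (r_d)_{b_d}$. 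The prefactor $\a^{|\mu|-\mu_1}$ is exactly what is needed to clear the $1/\a$ contributions coming from the Jack normalization, and for $\mu=(k)$ this reduces the sum to a one-row expression that can be rearranged as a telescoping product over the rectangles, giving the one-part case cleanly.

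For $\Ko^{(\a)}_\mu(\rvec^\pvec)$, I would combine the Knop--Sahi combinatorial formula for Jack polynomials (expressing $\jackJ^{(\a)}_\la$ as a non-negative $\a$-weighted sum over admissible fillings) with the explicit multirectangular shape $\la = \rvec^\pvec$. Since $\Ko^{(\a)}_\mu(\la)$ is obtained by extracting the monomial coefficient $\hatK^{\la,(\a)}_{\mu 1^{n-k}}$ and dividing by $(n-k)!$, I would first write this coefficient as a sum over admissible fillings of $\la$ of content $\mu 1^{n-k}$, and then group the fillings according to how their rows distribute among the rectangles. Each group should yield a contribution of the form $\a^c \prod_i (p_i)_{a_i}(r_i)_{b_i}$ with a non-negative rational prefactor, the $1/(n-k)!$ being absorbed by the $(n-k)!$ permutations of the $1$'s across the rectangles.

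The main obstacle is the grouping step: even at $\a=1$, the positivity in the falling factorial basis is proved through a nontrivial combinatorial bijection (identifying groups of tableaux with maps on surfaces whose vertices are counted by $p$'s and $r$'s), and no Jack analogue of this bijection is presently known. Worse, this is exactly the difficulty lying behind \cref{ConjLassallePQ} of Lassalle, suggesting that a full proof of \cref{ConjMain} would essentially require a map-theoretic model for Jack characters, and is therefore out of reach with the current tools. A reasonable and more modest target, consistent with what the introduction announces, is to push the strategy through only for $\mu=(k)$: both $\shJack_{(k)}$ and $\Ko^{(\a)}_{(k)}$ admit explicit product/hook-content-type expressions whose specialization to $\rvec^\pvec$ can be manipulated directly, bypassing the need for any Jack-level bijection.
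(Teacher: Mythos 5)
The statement you are addressing is \cref{ConjMain}, which is a \emph{conjecture}: the paper does not prove it, and your proposal does not either --- you yourself concede in the last paragraph that the general case is out of reach. So there is no complete argument to assess; what can be assessed is whether your plan matches the partial results the paper actually establishes, namely the case $\a=1$ (\cref{thm:mainshifted}) and the one-part case $\mu=(k)$ (\cref{ThmOnePart}). On the first of these your background picture is slightly off: the falling-factorial positivity of $\ShSchur_\mu$ and $\Ko_\mu$ at $\a=1$ is not obtained in the paper through the maps-on-surfaces bijection (that machinery underlies the monomial positivity of $\Ch_\mu$ in \cref{ThmPositivityCh1}); it is obtained by rewriting the known formula for $\Ch_\mu$ through set-partitions (\cref{LemAFF}) and then invoking a purely representation-theoretic positivity statement in the group algebra, \cref{lem:nonnegsum}, built from quasi-idempotents $X_\mu$, $Y_S$, $\Sigma_T$. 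The paper moreover explains why this route breaks at $\a=2$ (the analogue of $X_\mu$ is not central and the analogous integers can be negative), which is a more precise obstruction than the one you cite.

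The concrete gap in your plan is the grouping step for $\Ko^{(\a)}_\mu$: you propose to take the Knop--Sahi admissible fillings of $\la=\rvec^\pvec$ of content $\mu 1^{n-k}$ and group them ``according to how their rows distribute among the rectangles.'' This cannot work as stated, even for $\mu=(k)$: the Knop--Sahi admissibility condition $T(i,j)\neq T(i',j-1)$ for $i'<i$, and the definition of critical boxes (hence the $\a$-weight), involve \emph{adjacent columns}, and are therefore not invariant under the rectangle-skeleton equivalence --- two fillings with the same distribution of marked rows/columns among rectangles need not have the same weight, nor even both be admissible. This is exactly the point of \cref{rk:KSTableuaux_NotAdequate}. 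The paper's way around it, for $\mu=(k)$ only, is the weight-preserving bijection $\Psi$ from Knop--Sahi hook tableaux to ``permuted tableaux,'' whose conditions and weights \emph{do} depend only on how many marked boxes sit in each row (no adjacency constraints), yielding \cref{thm:NewCombFormula2}; only then does the skeleton argument produce the products $\prod_i\binom{p_i}{a_i}\prod_j\binom{r_j}{b_j}$ and hence falling-factorial positivity. Your alternative suggestion for $\shJack_{(k)}$ via the Okounkov--Olshanski tableau formula and a telescoping product is not what the paper does (it uses $\shJack_{(k)}=k!\,\Koa_{(k)}$ and the argument above), and as written it is only a sketch whose key cancellation step --- that the prefactor $\a^{|\mu|-\mu_1}$ ``clears'' the $1/\a$ contributions and that each landing-pattern class factors as claimed --- is asserted rather than proved. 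In short: the general conjecture remains open for you as for the paper, and the one route you specify in enough detail to check (skeleton-grouping of Knop--Sahi fillings) fails for the reason recorded in \cref{rk:KSTableuaux_NotAdequate}, missing the bijection that is the actual content of the paper's \cref{SectOnePart}.
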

This conjecture implies in particular that,
for each diagram $\la$, $\Ko^{(\a)}_\mu(\lambda)$ is a polynomial with non-negative coefficients in $\alpha$.
This weaker statement was conjectured by Stanley and Macdonald around 1990
and proved a few years later by Knop and Sahi \cite{KnopSahiCombinatoricsJack}
(in fact, they prove also that the coefficients are integers, which we do not discuss here).

\cref{ConjMain} has been tested numerically for partition up to size $9$ and $d=4$.
We explain briefly in \cref{SectComputer} how we computed the expression of $\shJack_\mu(\rvec^\pvec)$ 
and $\Ko^{(\a)}_\mu(\rvec^\pvec)$ in terms of multirectangular coordinates.


In addition to the special value $\a=1$,
we are able to prove another particular case of the conjecture above, corresponding to one-part partitions $\mu=(k)$.
As observed in \cref{Eq:ShJKoa_OnePart} below, we have 
\hbox{$\shJack_{(k)}= k! \Ko^{(\a)}_{(k)}$},
so that both parts of the conjecture coincide in this case.
\begin{theorem}
    [Second main result]
    For any integer $k$,
    the quantity $\shJack_{(k)}(\rvec^\pvec)=k! \Ko^{(\a)}_{(k)}(\rvec^\pvec)$
    is a polynomial with non-negative rational coefficients in the $\a$-falling factorial basis.
    \label{ThmOnePart}
\end{theorem}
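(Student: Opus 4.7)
The plan is to obtain an explicit, manifestly positive closed form for $\Ko^{(\a)}_{(k)}(\lambda)$, evaluate it on $\lambda = \rvec^\pvec$, and reorganize the outcome in the $\a$-falling factorial basis using classical identities with nonnegative coefficients. For one-row partitions the shifted Jack polynomial has a particularly tractable form: the Okounkov--Olshanski tableau formula for $\shJack_{(k)}$ reduces to a sum over weakly monotone sequences $i_1 \le \cdots \le i_k$, and equivalently one can extract the coefficient of $\monomM_{(k,1^{n-k})}$ in $\jackJ^{(\a)}_\lambda$ from the Knop--Sahi combinatorial expansion. My goal in this first step is to derive a formula of the shape
\begin{equation*}
\Ko^{(\a)}_{(k)}(\lambda) \;=\; \sum_{i \ge 1} P_k\bigl(\lambda_i - (i-1)/\a;\, \a\bigr),
\end{equation*}
where $P_k$ is an explicit polynomial of degree $k$, ideally already a nonnegative rational combination of $\a$-shifted falling factorials of its first argument. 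For $\a=1$ this should recover a formula analogous to the well-known row expression for the shifted Schur function $\ShSchur_{(k)}$.

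Substituting $\lambda = \rvec^\pvec$ splits the sum over rows into $d$ blocks: in block $j$ all rows have length $q_j := r_j + \cdots + r_d$, and the inner summation runs over $p_j$ consecutive integers $i$. Summing a polynomial in $i$ over a consecutive range yields a polynomial in $p_j$ that expands with nonnegative coefficients in the falling-factorial basis $\bigl((p_j)_s\bigr)_{s \ge 0}$, provided the integrand is itself written in the adapted falling-factorial basis; this is an instance of the hockey-stick identity. After this step, $\Ko^{(\a)}_{(k)}(\rvec^\pvec)$ is a polynomial in $q_1,\ldots,q_d$ and $p_1,\ldots,p_d$ with nonnegative $\a$-factorial coefficients in the $p_j$'s. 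Finally, each $(q_j)_b$ expands in the $(r_i)_{b_i}$ basis (for $i \ge j$) via the Vandermonde identity
\begin{equation*}
(x_1 + \cdots + x_s)_b \;=\; \sum_{b_1 + \cdots + b_s = b} \binom{b}{b_1, \ldots, b_s}\, (x_1)_{b_1} \cdots (x_s)_{b_s},
\end{equation*}
which contributes only nonnegative integer coefficients, yielding the desired expansion.

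The central obstacle is Step 1, and more precisely the interplay between the $1/\a$ shifts inherent to $\a$-shifted symmetry and the polynomial (not rational) dependence on $\a$ dictated by the $\a$-falling factorial basis. A naive row-by-row expansion produces negative powers of $\a$, which must cancel after summation over rows within a block of $\rvec^\pvec$. A clean way to handle this is to work with rescaled variables $\widetilde{x}_i = \a x_i - (i-1)$: in those variables the $\a$-shifted symmetry becomes ordinary symmetry, the one-row formula lies in $\ZZ[\a, \widetilde{x}_1, \ldots, \widetilde{x}_n]$, and the multirectangular substitution $\widetilde{x}_i = \a q_{j(i)} - (i-1)$ gives a polynomial dependence on $\a$ whose block-wise summation can be controlled directly. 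Once Step 1 is performed in such a presentation where block-wise row sums are manifestly positive, the conversion to the target basis via the two elementary identities above is essentially bookkeeping.
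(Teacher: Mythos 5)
Your plan hinges entirely on the closed form in Step 1, namely $\Ko^{(\a)}_{(k)}(\la)=\sum_{i\ge 1}P_k\bigl(\la_i-(i-1)/\a;\,\a\bigr)$, and this formula is false for every $k\ge 2$. A function of this shape is additive over the rows, hence lies in the linear span of $1,\psump_1^\star,\dots,\psump_k^\star$; its top-degree homogeneous part is of the form $c\sum_i x_i^k$ and contains no square-free cross monomials. But the top-degree part of $k!\,\Ko^{(\a)}_{(k)}=\shJack_{(k)}$ is $\jackJ^{(\a)}_{(k)}$, whose coefficient of $x_1x_2\cdots x_k$ equals $k!$ (the paper recalls $\hatK^{\la,(\a)}_{(1^{|\la|})}=|\la|!$), so genuine cross terms between distinct rows are present. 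Concretely, for $k=2$, $\a=1$ one has $\Ko_{(2)}=\ShSchur_{(2)}$ with values $\Ko_{(2)}((1))=0$, $\Ko_{(2)}((1,1))=0$, $\Ko_{(2)}((2))=2$; even for the regularized sum $\sum_i\bigl[P_2(\la_i-i+1)-P_2(-i+1)\bigr]$ these values force $P_2(-1)=P_2(0)=P_2(1)$ while $P_2(2)\neq P_2(0)$, impossible for a polynomial of degree $2$. The tableau formulas you invoke (Okounkov--Olshanski for a one-row shape, or Knop--Sahi) are sums over $k$-tuples of row indices, i.e.\ over configurations of $k$ boxes spread across several rows; they do not collapse to a single sum over rows, and the coupling between rows is precisely the difficulty. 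Consequently the subsequent blockwise hockey-stick summation and the Vandermonde conversion never get off the ground, since they presuppose the additive structure.

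What is missing is a reformulation of the Knop--Sahi expansion that is compatible with multirectangular data. The paper does this by a weight-preserving bijection from Knop--Sahi hook tableaux to ``permuted tableaux'', which yields $\Ko^{(\a)}_{(k)}(\la)=\sum_A\prod_R\prod_{j=0}^{|R\cap A|-1}(1+j\a)$, the sum running over column-distinct $k$-element subsets $A$ of boxes of $\la$ and the product over rows $R$. The weight depends only on the ``skeleton'' of $A$ relative to the rectangles of $\rvec^\pvec$, and the number of subsets with a fixed skeleton is $\prod_i\binom{p_i}{a_i}\prod_j\binom{r_j}{b_j}$, which is where the falling factorials $(p_i)_{a_i}(r_j)_{b_j}$ (with positive rational coefficients) actually come from. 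Note also the paper's remark that the raw Knop--Sahi formula cannot be used directly, because its arm/leg weights and adjacent-column constraints are not functions of the skeleton; your sketch contains no ingredient playing the role of this bijection, and without it (or some substitute that decouples the row interactions into skeleton data) the positivity argument cannot be completed along the lines you propose.
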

This theorem is proved in \cref{SubsectKkFFPos}.
A key step is a new combinatorial description of $\Ko^{(\a)}_{(k)}$
that could be interesting in itself; see \cref{thm:NewCombFormula2}.

\subsection{Methods}
For our first main result (\cref{thm:mainshifted}),
the strategy of the proof is the following.
\begin{itemize}
    \item We use known combinatorial formulas for $\Ch^{(1)}_\mu$ in terms of rectangular coordinates.
        From them, we can easily deduce similarly looking formulas for $\ShSchur_\mu(\rvec^\pvec)$ and $\Ko_\mu(\rvec^\pvec)$
        (\cref{combFormulasInFerayN}).
    \item A classical trick consisting in considering partitioned objects allows to rewrite these quantities in the
      falling factorial basis (\cref{LemAFF}).
    \item We then use some representation-theoretical manipulation to prove the positivity (\cref{lem:nonnegsum}).
\end{itemize}
\bigskip

The method of proof for the second main result is completely different.
Indeed, for general $\a$, there is no known combinatorial formula for $\Ch^{(\a)}_\mu$.
In this case, our starting point is the Knop and Sahi combinatorial formula
for Jack polynomials in the monomial basis.
Reinterpreted with our point of view, this result gives a combinatorial description
of $\Ko_\mu^{(\a)}(\la)$.
Unfortunately, as is, this result cannot be used to compute expressions
in multirectangular coordinates.
But in the special case $\mu=(k)$, we were able to construct a bijection
that yields a new combinatorial expression for $\Ko_{(k)}^{(\a)}(\la)$,
which is suitable for evaluation in multirectangular coordinates,
(\cref{thm:NewCombFormula2}).

Whether this construction has a natural extension to any partition $\mu$
is an open problem.
We have not been able to find one but, somehow, \cref{ConjMain}
suggests that it might exist.

\subsection{Discussion}
One of the original motivations of this paper was to unify two seemingly different approaches on Jack polynomials:
\begin{itemize}
    \item The classical approach, initiated by Stanley \cite{Stanley1989} and Macdonald \cite[Section VI,10]{Macdonald1995},
        consists in finding a formula for $J^{(\a)}_\la$ for a fixed partition $\la$,
        as a weighted sum of combinatorial objects.
        A seminal result in this approach is the already mentioned formula of Knop and Sahi \cite{KnopSahiCombinatoricsJack},
        which implies that the coefficients of (augmented) monomial symmetric functions in $J^{(\a)}_\la$
        are polynomials in $\a$ with non-negative integer coefficients.
    \item The second approach is sometimes referred to as \emph{dual}.
        One looks at the coefficient of a fixed power-sum in Jack symmetric functions
        as a function of the partition $\la$
        which indexes the Jack function.
        This is how $\Ch^{(\a)}_\mu$ is defined.
        Then one expresses this function in terms of some set of coordinates of Young diagrams, 
        \emph{e.g.}, multirectangular coordinates.
        In this approach, most positivity questions are still open: \cref{ConjLassallePQ} is an instance of such open questions,
        see \cite{LassalleFreeCumulants,DFSOrientability} for other examples.
\end{itemize}
The hope was to make a link between the two approaches to be able to solve conjectures in the dual approach,
using the results from the classical approach.
We did not achieve this goal but our work could be a first step to bring together both approaches. Indeed,
\begin{itemize}
    \item our main conjecture (\cref{ConjMain}) involves multirectangular coordinates,
but implies the Knop--Sahi positivity result;
\item tools that we use to establish our partial results come both from the dual approach 
    (for our first main result) and the classical one (for the second main result).
\end{itemize}

To finish the discussion section, let us mention that Knop and Sahi have proposed
a different positivity conjecture on shifted Jack polynomials \cite[Section 7]{KnopSahiShiftedSym}.
It seems to be unrelated to ours.

We also point out that shifted Jack polynomials have some Macdonald analogues; see \cite{Okounkov1998}.
On the other hand, we do not know a Macdonald analogue of the family $\Cha_\mu$:
the coefficients of the power-sum expansion of Macdonald polynomials
are not $q,t$ shifted symmetric functions (even after appropriate normalization).
Since $\Cha_\mu$ and its shifted symmetry
play an important role in our work, we did not consider the Macdonald setting.

\subsection{The case  \texorpdfstring{$\a=2$}{a=2}}

For $\a=2$, Jack polynomials are known to specialize to the so-called zonal polynomials.
The latter appear in the theory of Gelfand pairs, see \emph{e.g.}, \cite[Section 7]{Macdonald1995}.
We denote by $\ShZonal_\mu \coloneqq \jackJ^{\sh,(2)}_\mu$ the shifted zonal polynomial,
\emph{i.e.}, the shifted Jack polynomial for $\a=2$.

In \cref{SectAlpha2}, we give some new formulas for $\ShZonal_\mu$ and $\Ko^{(2)}_\mu$, 
similar to the case $\a=1$.
Unfortunately, we have not been able to use them to prove \cref{ConjMain} for $\a=2$.
This case remains open.
%
%

\subsection{Outline of the paper}
\cref{SectPreliminaries} gives the necessary notation and background.
In \cref{SectAlpha1}, we prove our first main result: the positivity of shifted Schur functions
in the falling factorial basis.
\cref{SectAlpha2} gives some analogous formulas for $\a=2$ (although we cannot prove the positivity in this case).
In \cref{SectOnePart}, we prove our second main result: the positivity for one-part partitions $\mu=(k)$,
using a new combinatorial interpretation of $\Koa_{(k)}(\la)$.
We conclude the paper (\cref{SectComputer})
by a short description of the computer tests supporting our main conjecture.


\section{Preliminaries}
\label{SectPreliminaries}
\subsection{Partitions, Young diagrams and hooks}
\label{SubsecPartitions}

We review basic notions in the theory of Young tableaux and symmetric functions.
This material can be found in standard reference literature such as \cite{Macdonald1995}.

A \emph{partition} $\lambda = (\lambda_1,\dotsc,\lambda_n)$ is a finite weakly decreasing sequence of non-negative integers.
The number of positive entries is the \emph{length} of the partition, denoted $\length(\lambda)$
and the size, $|\lambda|$, is the sum of all entries in $\lambda$.
The number of entries in $\lambda$ which are equal to $j$ is denoted $m_j(\lambda)$.

The partition with $k$ entries equal to $1$ is denoted $1^k$.
We use the standard convention that $\lambda_i=0$ if $i>\length(\lambda)$.

We say that $\lambda$ \emph{dominates} $\mu$ if $\lambda_1+\lambda_2 + \dots + \lambda_j \geq \mu_1+\mu_2 + \dots + \mu_j$
for all $j=1,2,\dots$. This defines a partial order on the set of partitions of equal size
and this relation is denoted $\lambda \domgeq \mu$.
Another partial order on partitions of the same size is the following:
 we say that $\mu$ \defin{refines} $\la$ if there exists an ordered set-partition $I_1,\dots,I_{\ell(\la)}$ of 
  the set $\{1,\dots,\ell(\mu)\}$ such that, for each $i \le \ell(\la)$, one has $\la_i=\sum_{j \in I_i} \mu_j$.
  If $\mu$ refines $\la$, then $\la$ dominates $\mu$, but the converse is not true.

We also write $\lambda \supseteq \mu$ if $\lambda_i \geq \mu_i$ for all $i$.
 Note that this last partial order compares partition with different sizes.
\medskip 

To every partition, we associate a \emph{Young diagram}, which is a left-justified arrangement of boxes in the plane,
where the number of boxes in row $i$ (from the top) is given by $\lambda_i$. See \cref{fig:diagramExample} for an example.
 Partitions will be often identified with their Young diagram.
Whenever $\lambda \supseteq \mu$, we define the skew Young diagram of shape $\lambda/\mu$
as the diagram obtained from $\lambda$ by removing
the of the diagram $\mu$.

\begin{figure}[h]
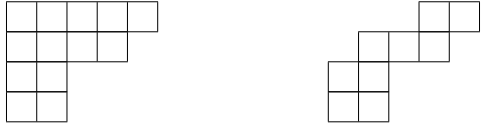

 $\young(\hfil\hfil\hfil\hfil\hfil,\hfil\hfil\hfil\hfil,\hfil\hfil,\hfil\hfil)$
\hspace{2cm}
$\young(:::\hfil\hfil,:\hfil\hfil\hfil,\hfil\hfil,\hfil\hfil)$
  \caption{Diagram of shape $(5,4,2,2)$ and $(5,4,2,2)/(3,1)$.}\label{fig:diagramExample}
\end{figure}


The \emph{arm-length} $a_\lambda(s)$ of a box $s$ in the Young diagram $\lambda$ is the number of boxes to the right of $s$,
and the \emph{leg-length} $l_\lambda(s)$ is the number of boxes below $s$.
The \emph{hook} value of a box is given by the arm-length, plus the leg length plus one. The hook values
  for a Young diagram is given in \cref{fig:hookLengthExample}.

\begin{figure}[!ht]
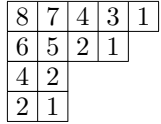

\centering
  $\young(87431,6521,42,21)$
 \caption{Diagram with hook lengths.}\label{fig:hookLengthExample}
\end{figure}
The \emph{hook-product} of a Young diagram is the product of all hook values of the boxes in the diagram.
There are two $\alpha$ deformations of the hook-product, given by
\[
H_\lambda^{(\alpha)} = \prod_{s \in \lambda} (\alpha a_\lambda(s) + l_\lambda(s) + 1), \quad
{H'}_\lambda^{(\alpha)} = \prod_{s \in \lambda} (\alpha a_\lambda(s) + l_\lambda(s) + \alpha).
\]

\bigskip

A \defin{semi-standard Young tableau} of shape $\lambda$
is a filling of a Young diagram of shape $\lambda$ with positive integers, such that
each row is weakly increasing left to right, and each column is strictly increasing from top to bottom.
We define skew semi-standard Young tableaux in the same way, but for diagrams of shew shape $\lambda/\mu$.

The \defin{type} $\tau=(\tau_1,\tau_2,\dots,\tau_l)$ of a semi-standard Young tableau $T$
is the integer composition $\tau$ such that $\tau_i$ counts the number of boxes in $T$ filled with $i$.
A Young tableau is \defin{standard} if its type is $1^n$ where $n$ is the number of boxes in its diagram.
The \emph{Kostka coefficient} $K^\lambda_\tau$ is the number of semi-standard Young tableaux of shape $\lambda$
and type $\tau$.

\subsection{Symmetric functions}

In this section, we briefly present some bases of the symmetric function ring and some relations between them.

The \defin{monomial symmetric functions}, denoted $M_\mu$ and indexed by partitions, are defined as
\[
 M_\mu = \sum_{ \rho \text{ distinct permutations of } \mu} x^{\rho}.
\]
The \defin{power sum symmetric functions} $\psump_\mu$ are defined as
\[
 \psump_{\mu} = \psump_{\mu_1}\psump_{\mu_2}\dotsm \psump_{\mu_l}, \text{ where } \psump_j = x_1^j + x_2^j + \dotsb.
\]
Finally, as mentioned in the introduction, the Schur function $S_\mu$ is defined by 
its restrictions to finite number of variables:
\[
\schurS_\mu (x_1,\dotsc,x_n)
= \frac {\det \left( x_i^{\mu_j+n-j} \right)}
{\det \left( x_i^{n-j} \right)} .
\]
They can be alternatively defined in terms of tableaux,
by their expansion in the monomial basis given in \cref{EqSchurInMon}.
\medskip

Let us consider the monomial expansion of power sum symmetric functions.
For a partition $\nu$ of $k$, there exists a collection of numbers 
$(L_{\nu,\mu})_{\mu \vdash k}$ such that:
\begin{equation}
    \psump_\nu = \sum_{\mu \vdash k} L_{\nu,\mu} M_\mu.
    \label{EqPowerOnMon}
\end{equation}
We record two important properties of these coefficients.
\begin{description}
\item[triangularity]
$L_{\nu,\mu} = 0$ unless $\nu$ refines $\mu$. In particular, $L_{\nu,\mu} = 0$ unless $\nu \domleq \mu$; \cite[p. 103]{Macdonald1995}.

\item[stability]
$L_{\nu \cup (1),\mu \cup (1)} = (m_1(\nu)+1) \cdot L_{\nu,\mu}$.
This follows easily from the combinatorial interpretation in \cite[Prop. 7.7.1]{StanleyEC2}.
In particular iterating it, one has that
        \begin{equation}
            L_{\nu 1^{n-k}, \mu 1^{n-k}} = (m_1(\nu)+1) \cdots (m_1(\nu)+n-k)\, L_{\nu,\mu}
            = \frac{(m_1(\nu)+n-k)!}{m_1(\nu)!}\, L_{\nu,\mu}.
            \label{EqLAddNmKUn}
        \end{equation}
\end{description}

\begin{lemma}
Let $\pi$ be a permutation and $\type(\pi)$ its cycle-type.
Then $L_{\type(\pi),\mu}$ is the number of functions 
$f:[k] \to [\ell(\mu)]$ that are constant on cycles of $\pi$
and such that the size of the pre-image $f^{-1}(i)$ is exactly $\mu_i$.
\label{LemLPerm}
\end{lemma}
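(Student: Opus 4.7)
The plan is to extract the coefficient $L_{\type(\pi),\mu}$ by directly expanding $\psump_\nu$ as a sum of monomials and identifying which of them equal the ``canonical'' monomial of $M_\mu$.

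First, I would expand
\[
\psump_\nu = \prod_{j=1}^{\ell(\nu)} \psump_{\nu_j} = \prod_{j=1}^{\ell(\nu)} \Big( \sum_{i \geq 1} x_i^{\nu_j} \Big) = \sum_{g : [\ell(\nu)] \to \setN_{>0}} \prod_{j=1}^{\ell(\nu)} x_{g(j)}^{\nu_j},
\]
where $\nu = \type(\pi)$. Labeling the cycles of $\pi$ by $[\ell(\nu)]$ so that cycle $j$ has length $\nu_j$, I view $g$ as a function from the set $C(\pi)$ of cycles of $\pi$ to $\setN_{>0}$; the monomial contributed by $g$ is $\prod_{c \in C(\pi)} x_{g(c)}^{|c|}$.

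Next, I would fix the ``standard'' monomial $x^\mu := x_1^{\mu_1} x_2^{\mu_2} \cdots x_{\ell(\mu)}^{\mu_{\ell(\mu)}}$. By definition, $M_\mu$ is the sum of all \emph{distinct} monomials obtained by permuting the exponent vector of $\mu$, so $x^\mu$ has coefficient $1$ in $M_\mu$ and coefficient $0$ in $M_{\mu'}$ for every $\mu' \neq \mu$. Consequently, $L_{\type(\pi),\mu} = [x^\mu]\psump_\nu$. From the expansion above, this coefficient equals the number of functions $g : C(\pi) \to [\ell(\mu)]$ satisfying $\sum_{c \in C(\pi) : g(c)=i} |c| = \mu_i$ for every $i \in [\ell(\mu)]$.

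Finally, I would pass from $g$ to the function $f : [k] \to [\ell(\mu)]$ defined by $f(x) = g(c)$ whenever $x$ lies in the cycle $c$; this is a bijection between functions $C(\pi) \to [\ell(\mu)]$ and functions $[k] \to [\ell(\mu)]$ constant on cycles of $\pi$. Under this bijection, the condition $\sum_{c : g(c)=i} |c| = \mu_i$ becomes $|f^{-1}(i)| = \mu_i$, yielding exactly the count claimed in the lemma. There is no serious obstacle here; the only point that requires a touch of care is making sure that $x^\mu$ does pick out $L_{\type(\pi),\mu}$ unambiguously from $\psump_\nu$, which is guaranteed by the defining combinatorial description of $M_\mu$ as a sum over \emph{distinct} permutations of $\mu$.
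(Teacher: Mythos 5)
Your proof is correct. It covers the same ground as the paper's argument, but by a more self-contained route: the paper disposes of the lemma in two lines by observing that functions on $[k]$ constant on the cycles of $\pi$ are the same as functions on $C(\pi)$, and then citing the known combinatorial interpretation of the coefficients in $\psump_\nu=\sum_\mu L_{\nu,\mu}M_\mu$ (Stanley, EC2, Prop.\ 7.7.1). You instead reprove that cited fact from scratch, expanding $\psump_\nu=\sum_g \prod_j x_{g(j)}^{\nu_j}$ and extracting the coefficient of the distinguished monomial $x_1^{\mu_1}\cdots x_{\ell(\mu)}^{\mu_{\ell(\mu)}}$, which appears with coefficient $1$ in $M_\mu$ and $0$ in every other $M_{\mu'}$; the final bijection between $g:C(\pi)\to[\ell(\mu)]$ and $f:[k]\to[\ell(\mu)]$ constant on cycles is exactly the paper's observation. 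The coefficient-extraction step is sound (a $g$ hitting an index $i>\ell(\mu)$, or missing an index $i\le\ell(\mu)$, cannot produce the target monomial, so the image constraint comes for free), so the only trade-off is length: your version makes the lemma independent of the external reference, while the paper's version is shorter by outsourcing the identity for $L_{\nu,\mu}$.
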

\begin{proof}
    Note that a function on $[k]$ that is constant on cycles of $\pi$
    can be equivalently seen as a function on $C(\pi)$, the cycles of $\pi$.
    Using this, the lemma is just a rewording of \cite[Prop. 7.7.1]{StanleyEC2}.
\end{proof}
 
\emph{Note:} since this article deals mostly with \emph{shifted symmetric functions},
we will use the terminology \emph{usual symmetric functions}
to refer to symmetric functions, as described in this Section.
\medskip

\emph{Notation:}
Beware that the letter $p$ is used at the same time for power-sum symmetric functions
and multi-rectangular coordinates.
Since both uses of $p$ are classical, we decided not to change the notation.
We hope that it will not create any difficulty.
Note that we use a slightly different font $\psump$ for power-sums,
which may help in case of doubts.

 \subsection{Characters of the symmetric group and symmetric functions}
  \label{Subsec:Frobenius}
Irreducible representations of the symmetric group $S_n$ 
are indexed by partitions $\la$ of $n$.
If $\pi$ lies in $S_n$, we will denote 
$\chi^\la(\pi)$ the \emph{character} of the representation
indexed by $\la$, evaluated on the permutation $\pi$.

This character depends only on the cycle-type $\tau$ of $\pi$.
Therefore, if $\la$ and $\tau$ are two partitions of $n$,
we will also denote by $\chi^\la_\tau$ the character
$\chi^\la(\pi)$, where $\pi$ is \emph{any} permutation of cycle-type $\tau$.
\medskip

By a result of Frobenius, these irreducible character values
appear in symmetric function theory; see, \emph{e.g.}, \cite{Macdonald1995,SaganSymmetric}.
Namely, for any partition $\la$ of $n$,
\begin{equation}
    \schurS_\lambda = \sum_\tau \chi^\lambda_\tau \frac{\psump_\tau}{z_\tau}.
    \label{EqFrobenius}
\end{equation}

\subsection{Jack polynomials}

\label{SubsectDefJack}
In this subsection, we review a few properties of Jack polynomials
that are useful in this paper.
For details and proofs, we refer to Macdonald's seminal book \cite[Section VI,10]{Macdonald1995}.
We also use the notation of this book:
in particular we work with the $J$-normalization of Jack polynomials.

Denote $\QQ(\a)$ the field of rational fraction in an indeterminate $\a$
over the rational numbers.
We consider the ring $\Lambda_{\QQ(\a)}$ of symmetric functions
over the field $\QQ(\a)$.
Then the family of Jack polynomials
$(\jackJ_\la^{(\a)})$, indexed by partitions $\la$,
is a basis of $\Lambda_{\QQ(\a)}$.
Recall that $\hatK^{\la,(\a)}_{\tau}$ denotes the coefficients of the monomial expansion
 of $\jackJ^{(\a)}_\la$, { i.e.}
  $\jackJ^{(\a)}_\la = \sum_{\tau \vdash |\lambda|} \hatK^{\la,(\a)}_{\tau} \monomM_\tau$.
   \medskip

For $\a=1$, Jack polynomials coincide up to a multiplicative constant, with Schur functions,
namely:
\[ J_\la^{(1)} = H^{(1)}_\la S_\la,\]
where $H^{(1)}_\la=|\la|!/\chi^\lambda_{1^{|\la|}}$ is the hook product of $\la$.
Taking monomial coefficients, we get that, for any $\la$ and $\tau$,
one has $\hatK^{\la,(1)}_\tau = H^{(1)}_\la K^\la_\tau$.
Using also the fact that $K^\la_{1^n}=n!/H^{(1)}_\la$ 
(this is the number of standard Young tableaux of shape $\la$),
we have
\[\Ko_\mu^{(1)}(\la) = \frac{1}{(n-k)!} H^{(1)}_\la K^\la_{\mu 1^{n-k}}
= \frac{n!}{(n-k)!} \frac{K^\la_{\mu 1^{n-k}}}{K^\la_{1^n}} = \Ko_\mu(\la),\]
as claimed in~\cref{SubsectIntroJack}.

\subsection{Shifted symmetric functions}
\label{SubSecDefShSym}
In this section, we formally introduce the notion of $\a$-shifted symmetric functions
and present a few useful facts about them.
Our presentation mainly follows the one in \cite[Subsection 2.2]{LassalleConjecturePQ}.
\begin{definition}
    An $\alpha$-shifted symmetric function $\F$ is a sequence $(\F_N)_{N \ge 1}$
    such that 
    \begin{itemize}
        \item For each $N \ge 1$, $\F_N$ is a polynomial in $N$ variables
            $x_1, \cdots, x_N$ with coefficients in $\QQ(\a)$
            that is symmetric in $x_1-1/\alpha$, $x_2-2/\alpha$, \ldots,
            $x_N-N/\alpha$.
        \item We have the stability property: for each $N \ge 1$,
            \[\F_{N+1} (x_1,\dotsc,x_N,0) = \F_N(x_1,\dotsc,x_N).\]
        \item We have $\sup_{N \ge 1} \deg(\F_N) < \infty$.
    \end{itemize}
\end{definition}

An example of $\a$-shifted symmetric function is the following,
where we omit the index $N$ for readability:
\begin{equation}
    \psump_k^\star(x_1,\dotsc,x_N)=\sum_{i \ge 1}^N
\big[ (\a x_i-i +1/2)^k - (-i +1/2)^k \big]. 
\label{EqPkStar}
\end{equation}
It is easily shown that $(\psump_k^\star)$ is an algebraic basis
of the $\a$-shifted symmetric function ring \cite[Corollary 1.6]{OkounkovOlshanskiShiftedSchur}.
As usual for power-sums, if $\nu=(\nu_1,\dots,\nu_h)$ is a partition,
we denote $\psump_\nu^\star=\psump_{\nu_1}^\star \dots \psump_{\nu_h}^\star$.
This is a linear basis of the $\a$-shifted symmetric function ring.

Let $F$ be an $\a$-shifted symmetric function and 
$\la=(\la_1,\dotsc,\la_\ell)$ a Young diagram
with $\ell$ rows.
Then, as explained in the introduction, we define
\[F(\la) = F_\ell(\la_1,\dotsc,\la_\ell). \]
The polynomial $F_\ell$ in $\ell$ variables is determined
by its values on non-increasing lists of positive integers,
that is, Young diagrams with $\ell$ rows,
so that $F$ is determined by its values on all Young diagrams.
From this simple remark, we can identify the algebra of 
$\a$-shifted symmetric functions with a subalgebra of functions on Young diagrams.
We use this identification repeatedly in this paper without further mention.
\bigskip

Following Okounkov and Olshanski \cite{OkounkovOlshanskiShiftedJack}
--- beware that they are working with the $P$ normalization of Jack polynomials ---
we define the shifted Jack polynomials $\shJack_\mu$ as the unique $\a$-shifted
symmetric function of degree at most $|\mu|$ such that, for any partition $\la$ with $|\la| \le |\mu|$,
\begin{equation}
\shJack_\mu(\la) = \begin{cases}
    \frac{H_\la^{(\a)} {H'}_\lambda^{(\alpha)}}{\a^{|\mu|}}  & \text{if }\la=\mu, \\
    0 & \text{otherwise.}
\end{cases}
\label{eq:def_ShJack_Vanishing}
\end{equation}
The existence and uniqueness of such a function is not obvious.
Indeed, if we look for solutions of \eqref{eq:def_ShJack_Vanishing} under the form
\[\shJack_\mu=\sum_{\nu:|\nu| \le |\mu|}  a_\nu \psump_\nu^\star,\]
then \eqref{eq:def_ShJack_Vanishing} becomes a linear system of equations,
with as many indeterminates $(a_\nu)_{|\nu| \le |\mu|}$ as equations 
(there is one equation for each $\la$ with $|\la| \le |\mu|$).
We should therefore check that this system is non-degenerate.
This was done by Knop and Sahi, using an inductive argument,
in a much more general context; see \cite{KnopSahiShiftedSym}.

From this definition, the fact (advertised in the introduction) that
the top component of $\shJack_\mu$ is $J_\mu^{(\a)}$ 
is far from being obvious.
This is again a result of Knop and Sahi \cite{KnopSahiShiftedSym}.
\bigskip

The functions $\shJack_\mu$ are by definition $\a$-shifted symmetric.
From \cite[Proposition 2]{LassalleConjecturePQ},
we know that $\Cha_\mu$ is also $\a$-shifted symmetric.
Finally, the $\a$-shifted symmetry of $\Koa_\mu$
is established in the next subsection.

\subsection{Change of bases in shifted symmetric function ring}
\label{Subsect:change_bases}
\begin{proposition}
    Let $\mu$ be a partition of $k$.
    As functions on Young diagrams, one has:
    \[   \Koa_\mu = \sum_{\nu \vdash k}
        \frac{L_{\nu,\mu}}{z_\nu} \Cha_\nu. \]
    In particular, $\Koa_\mu$ is a shifted symmetric function.
    \label{PropKoaOnCha}
\end{proposition}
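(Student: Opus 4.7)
The plan is to start from the two monomial/power-sum expansions of the Jack polynomial and play them off against each other via the change-of-basis coefficients $L_{\nu,\mu}$ introduced in \cref{EqPowerOnMon}.

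Concretely, I would combine
\[
J^{(\a)}_\la = \sum_{\tau \vdash n} \hatK^{\la,(\a)}_\tau \, \monomM_\tau
\quad\text{and}\quad
J^{(\a)}_\la = \sum_{\tau \vdash n} \theta^{(\a)}_\tau(\la) \, \psump_\tau,
\]
substitute $\psump_\tau = \sum_\mu L_{\tau,\mu} \monomM_\mu$, and compare coefficients in the monomial basis to deduce the \emph{key identity}
\[
\hatK^{\la,(\a)}_\mu = \sum_{\tau \vdash n} \theta^{(\a)}_\tau(\la) \, L_{\tau,\mu}.
\]
I would then specialize this identity at the partition $\mu 1^{n-k}$ with $n=|\la|$ and $k=|\mu|$, which is exactly what appears in the definition \eqref{EqDefKoA} of $\Koa_\mu(\la)$.

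The second step is to exploit the triangularity property of $L$ to cut down the sum: $L_{\tau,\mu 1^{n-k}} = 0$ unless $\tau$ refines $\mu 1^{n-k}$. Since a part equal to $1$ can only be refined trivially, every such $\tau$ has the form $\tau = \nu \cup 1^{n-k}$ for some $\nu \vdash k$ refining $\mu$. The stability property \eqref{EqLAddNmKUn} then gives
\[
L_{\nu 1^{n-k},\, \mu 1^{n-k}}
= \frac{(m_1(\nu)+n-k)!}{m_1(\nu)!}\, L_{\nu,\mu}.
\]
Plugging this into the key identity and dividing by $(n-k)!$ yields
\[
\Koa_\mu(\la)
= \sum_{\nu \vdash k} \theta^{(\a)}_{\nu 1^{n-k}}(\la) \, \binom{n-k+m_1(\nu)}{m_1(\nu)} L_{\nu,\mu}.
\]

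The final step is a cosmetic one: using \eqref{EqDefChA} to rewrite
\[
\theta^{(\a)}_{\nu 1^{n-k}}(\la)\,\binom{n-k+m_1(\nu)}{m_1(\nu)} = \frac{\Cha_\nu(\la)}{z_\nu},
\]
which transforms the right-hand side into $\sum_{\nu \vdash k} \frac{L_{\nu,\mu}}{z_\nu} \Cha_\nu(\la)$, as desired. The shifted-symmetry of $\Koa_\mu$ follows since each $\Cha_\nu$ is $\a$-shifted symmetric (by \cite[Proposition 2]{LassalleConjecturePQ}) and the coefficients $L_{\nu,\mu}/z_\nu$ do not depend on $\la$.

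The only subtle point is the combinatorial bookkeeping in matching the binomial and factorial factors coming from the stability of $L$ with the normalization constants hidden in the definitions of $\Cha_\nu$ and $\Koa_\mu$; apart from this, the argument is a straightforward change-of-basis computation. The case $|\la|<|\mu|$ is immediate from the vanishing conventions in both definitions.
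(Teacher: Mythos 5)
Your proposal is correct and follows essentially the same route as the paper's proof: compare the monomial and power-sum expansions of $J^{(\a)}_\la$, extract the coefficient of $\monomM_{\mu 1^{n-k}}$, use the triangularity of $L$ to reduce to $\tau=\nu 1^{n-k}$, and then the stability relation \eqref{EqLAddNmKUn} together with the definition \eqref{EqDefChA} to match the normalizations. The bookkeeping of the binomial factor $\binom{n-k+m_1(\nu)}{m_1(\nu)}$ is exactly as in the paper, and the vanishing case $|\la|<|\mu|$ is handled the same way.
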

\begin{proof}
    Fix a Young diagram $\la$ of size $n$. 
       We want to show that 
            \[   \Koa_\mu(\la) = \sum_{\nu \vdash k}
                        \frac{L_{\nu,\mu}}{z_\nu} \Cha_\nu(\la). \]
    If $n<k$, both sides of the equality above are zero by definition,
    so let us focus on the case $n \ge k$.
    From \cref{eq:jack-characters,EqPowerOnMon}, we get
\[
J_\la^{(\a)}=\sum_{\tau \vdash n} 
\theta_{\tau}^{(\a)}(\la)\, \psump_{\tau}
= \sum_{\tau \vdash n} 
 \sum_{\rho \vdash n} 
 \theta_{\tau}^{(\a)}(\la)\, L_{\tau,\rho}\, M_\rho.
\]
Extracting the coefficient of $M_{\mu 1^{n-k}}$,
 we get
  \[\hatK^{\lambda,(\a)}_{\mu 1^{n-k}} = \sum_{\tau \vdash n}\theta_{\tau}^{(\a)}(\la)\, L_{\tau,\mu 1^{n-k}}.\]
  Using the definition of $\Koa_\mu$, given in \cref{EqDefKoA}, this yields:
  \[
\Koa_\mu(\la) = \frac{1}{(n-k)!} \hatK^{(\a)}_{\mu 1^{n-k}} (\lambda) =
\frac{1}{(n-k)!} \sum_{\tau \vdash n} \theta_{\tau}^{(\a)}(\la) \, L_{\tau,\mu 1^{n-k}}.
\]
But $L_{\tau,\mu 1^{n-k}} \neq 0$ only if $\mu 1^{n-k} $ refines $\tau$.
In particular, if this is the case, $\tau$ must be equal to $\nu 1^{n-k}$
for some partition $\nu$ of $k$.
Therefore,
\[\Koa_\mu(\la) = \frac{1}{(n-k)!} \sum_{\nu \vdash k} \theta_{\nu 1^{n-k}}^{(\a)}(\la)
L_{\nu 1^{n-k},\mu 1^{n-k}}.\]
From \cref{EqDefChA,EqLAddNmKUn},
we obtain:
\[
\Koa_\mu(\la) = \frac{(m_1(\nu)+n-k)!}{m_1(\nu)! \, (n-k)!} 
\sum_{\nu \vdash k} L_{\nu,\mu} \theta_{\nu 1^{n-k}}^{(\a)}(\la)
 = \sum_{\nu \vdash k} \frac{L_{\nu,\mu}}{z_\nu} \Cha_\nu(\la). \qedhere
\]
\end{proof}

\begin{proposition}
    Let $\mu$ be a partition of $k$.
    As functions on Young diagrams, one has:
    \[\shJack_\mu = \sum_{\nu \vdash k} 
    \theta_\nu^{(\a)}(\mu) \a^{-k+\ell(\nu)} \Cha_\nu. \]
    \label{PropShJackOnCha}
\end{proposition}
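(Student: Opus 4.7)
The plan is to show that the right-hand side $F := \sum_{\nu \vdash k} \theta_\nu^{(\a)}(\mu)\, \a^{-k+\ell(\nu)}\, \Cha_\nu$ satisfies the defining vanishing characterization \cref{eq:def_ShJack_Vanishing} of $\shJack_\mu$, and then to invoke the Knop--Sahi uniqueness result recalled in \cref{SubSecDefShSym}. Since each $\Cha_\nu$ with $\nu \vdash k$ is $\a$-shifted symmetric of degree at most $k$, so is $F$; the remaining task is to check that $F(\la) = \shJack_\mu(\la)$ for every $\la$ with $|\la| \leq k$.

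The case $|\la| < k$ is immediate: by \cref{EqDefChA}, $\Cha_\nu(\la) = 0$ whenever $|\la| < |\nu| = k$, so $F(\la) = 0 = \shJack_\mu(\la)$. For $|\la| = k$, the binomial factor in \cref{EqDefChA} equals $1$, giving $\Cha_\nu(\la) = z_\nu\, \theta_\nu^{(\a)}(\la)$, and hence
\[
F(\la) = \a^{-k} \sum_{\nu \vdash k} z_\nu\, \a^{\ell(\nu)}\, \theta_\nu^{(\a)}(\mu)\, \theta_\nu^{(\a)}(\la).
\]
The key step is to recognize this sum as an instance of $\a$-deformed Hall orthogonality of Jack polynomials. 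Expanding both Jack polynomials in power-sums via \cref{eq:jack-characters} and combining with the standard relations $\langle \psump_\nu, \psump_\rho \rangle_\a = \delta_{\nu,\rho}\, z_\nu\, \a^{\ell(\nu)}$ and $\langle J_\mu^{(\a)}, J_\la^{(\a)} \rangle_\a = H_\mu^{(\a)}\, {H'}_\mu^{(\a)}\, \delta_{\la,\mu}$ yields
\[
\sum_{\nu \vdash k} z_\nu\, \a^{\ell(\nu)}\, \theta_\nu^{(\a)}(\mu)\, \theta_\nu^{(\a)}(\la) = H_\mu^{(\a)}\, {H'}_\mu^{(\a)}\, \delta_{\la,\mu},
\]
so that $F(\la) = \a^{-k}\, H_\mu^{(\a)}\, {H'}_\mu^{(\a)}\, \delta_{\la,\mu}$, in agreement with \cref{eq:def_ShJack_Vanishing}. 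Uniqueness then gives $F = \shJack_\mu$.

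The only non-cosmetic ingredient is the degree bound $\deg \Cha_\nu \leq |\nu|$, which is needed so that Knop--Sahi uniqueness applies to $F$; this is part of the cited Lassalle result establishing the $\a$-shifted symmetry of $\Cha_\nu$. Beyond that, the argument parallels the proof of \cref{PropKoaOnCha}, with the vanishing characterization of $\shJack_\mu$ playing the role that the explicit formula \cref{EqDefKoA} plays there, and the Hall orthogonality replacing the change-of-basis identity \cref{EqPowerOnMon}.
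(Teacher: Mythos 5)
Your proof is correct, but it takes a genuinely different route from the paper. The paper's proof is a one-line transport of the power-sum expansion \cref{eq:jack-characters} through Lassalle's linear isomorphism $f \mapsto f^\#$, using his two identities $(J^{(\a)}_\mu)^\# = \shJack_\mu$ and $\psump_\nu^\# = \a^{-k+\ell(\nu)}\Cha_\nu$. You instead verify directly that the right-hand side satisfies the vanishing characterization \cref{eq:def_ShJack_Vanishing}: the case $|\la|<k$ is immediate from \cref{EqDefChA}, and for $|\la|=k$ you reduce the evaluation to $\a^{-k}\sum_{\nu\vdash k} z_\nu \a^{\ell(\nu)}\theta^{(\a)}_\nu(\mu)\theta^{(\a)}_\nu(\la)$, which the orthogonality $\langle \psump_\nu,\psump_\rho\rangle_\a = \delta_{\nu\rho} z_\nu \a^{\ell(\nu)}$ together with $\langle J^{(\a)}_\mu, J^{(\a)}_\la\rangle_\a = H^{(\a)}_\mu {H'}^{(\a)}_\mu \delta_{\la\mu}$ identifies with $H^{(\a)}_\mu {H'}^{(\a)}_\mu\delta_{\la\mu}$; this matches \cref{eq:def_ShJack_Vanishing} exactly, and Knop--Sahi uniqueness concludes. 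What your argument buys is independence from Lassalle's identity $(J^{(\a)}_\mu)^\# = \shJack_\mu$ (you only need his Proposition~2 for the $\a$-shifted symmetry and the degree bound $\deg \Cha_\nu \le |\nu|$, which you correctly flag as the one external ingredient needed for the uniqueness step); the price is invoking Macdonald's norm formula for the $J$-normalization, which the paper's proof avoids. Both routes are sound, and your intermediate orthogonality identity $\sum_{\nu\vdash k} z_\nu\a^{\ell(\nu)}\theta^{(\a)}_\nu(\mu)\theta^{(\a)}_\nu(\la) = H^{(\a)}_\mu {H'}^{(\a)}_\mu\delta_{\la\mu}$ is the standard dual-basis statement, so the computation checks out.
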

\begin{proof}
    We use the work of Lassalle \cite{LassalleConjecturePQ}.
    In this paper, Lassalle constructs a linear isomorphism that he denotes $f \mapsto f^\#$
    between usual symmetric functions and $\a$-shifted symmetric functions.
    He also proves
    \begin{align*}
        (J^{(\a)}_\mu)^\# &= \shJack_\mu \qquad \text{\cite[beginning of Section 3]{LassalleConjecturePQ}};\\
        \psump_\mu^\# &= \a^{-k+\ell(\nu)} \Cha_\nu \qquad \text{\cite[Proposition 2]{LassalleConjecturePQ}}.
    \end{align*}
    In fact, \cite[Proposition 2]{LassalleConjecturePQ}
    only gives the second equality when evaluated on
    a partition $|\la|$ of size at least $|\nu|$, but it is
    straightforward to check that both sides are equal to $0$
    when evaluated on a partition $|\la|$ of size smaller than $|\nu|$.
    Applying the linear map $f \mapsto f^\#$ to \cref{eq:jack-characters}
    gives the proposition.
\end{proof}

\begin{remark}
    The case $\a=1$ of this proposition is a reformulation of
    a result of Okounkov and Olshanski 
    \cite[Equation (15.21)]{OkounkovOlshanskiShiftedSchur}.
\end{remark}

We now observe that, for any partition $\nu$ of size $k$, one has
$L_{\nu,(k)}=1$, while $\theta_\nu^{(\a)}( (k))=\frac{k!}{z_\nu} \a^{k-\ell(\nu)}$
\cite[Section VI.10, Example 1]{Macdonald1995}.
Comparing with \cref{PropKoaOnCha,PropShJackOnCha} for $\mu=(k)$, we get
\begin{equation}
  \shJack_{(k)}=k! \cdot \Koa_{(k)}.
  \label{Eq:ShJKoa_OnePart}
\end{equation}

Finally, in the case $\a=1$, we will need a direct relation between
the $\Ko_\mu$ basis and the shifted Schur basis.
\begin{proposition}
    Let $\mu$ be a partition of $k$.
    As functions on Young diagrams, one has:
    \[\Ko_\mu = \sum_{\nu \vdash k} 
       K^\nu_\mu \ShSchur_\nu.\]
    \label{PropKoOnShSchur}
\end{proposition}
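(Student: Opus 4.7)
The plan is to expand both sides in the linear basis of normalized characters $(\Ch_\nu)_{\nu \vdash k}$ of the degree-$k$ shifted symmetric functions and match coefficients, using a shifted analogue of the Frobenius formula combined with \cref{PropKoaOnCha}. The whole identity then boils down to a classical symmetric-function identity relating Kostka numbers, irreducible characters of $\mathfrak{S}_n$, and the coefficients $L_{\tau,\mu}$.

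First, specializing \cref{PropKoaOnCha} at $\a=1$ gives immediately
\[
\Ko_\mu=\sum_{\nu \vdash k}\frac{L_{\nu,\mu}}{z_\nu}\,\Ch_\nu.
\]
Next, the key auxiliary step is the shifted Frobenius formula
\[
\ShSchur_\mu=\sum_{\nu \vdash k}\frac{\chi^\mu_\nu}{z_\nu}\,\Ch_\nu,
\]
which I would derive by specializing \cref{PropShJackOnCha} at $\a=1$: on the one hand, $\jackJ^{(1)}_\mu=H^{(1)}_\mu \schurS_\mu$ together with the classical Frobenius formula~\eqref{EqFrobenius} gives $\theta^{(1)}_\nu(\mu)=H^{(1)}_\mu\chi^\mu_\nu/z_\nu$; on the other hand, as recalled in \cref{SubsectIntroJack} (and readily checked from~\eqref{eq:def_ShJack_Vanishing} since $H^{(1)}_\mu={H'}^{(1)}_\mu$), one has $\shJack^{(1)}_\mu=H^{(1)}_\mu\,\ShSchur_\mu$. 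Plugging these into \cref{PropShJackOnCha} and cancelling $H^{(1)}_\mu$ yields the displayed shifted Frobenius formula. (This also matches the Okounkov–Olshanski identity mentioned in the remark after \cref{PropShJackOnCha}.)

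Substituting this shifted Frobenius into the right-hand side of the proposition and reorganizing gives
\[
\sum_{\nu \vdash k} K^\nu_\mu\,\ShSchur_\nu
=\sum_{\tau \vdash k}\frac{1}{z_\tau}\Biggl(\sum_{\nu \vdash k} K^\nu_\mu\,\chi^\nu_\tau\Biggr)\Ch_\tau,
\]
so, comparing with the expansion of $\Ko_\mu$ above and using that the $\Ch_\tau$ are linearly independent, it is enough to prove
\[
L_{\tau,\mu}=\sum_{\nu \vdash k} K^\nu_\mu\,\chi^\nu_\tau.
\]
This is a purely classical identity: expand $\psump_\tau$ in the monomial basis in two ways, directly via~\eqref{EqPowerOnMon} and by first applying the inverse Frobenius formula $\psump_\tau=\sum_\nu\chi^\nu_\tau\,\schurS_\nu$ (which is equivalent to~\eqref{EqFrobenius} by orthogonality of characters) followed by $\schurS_\nu=\sum_\mu K^\nu_\mu\,\monomM_\mu$; comparing coefficients of $\monomM_\mu$ gives the claim.

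There is no real obstacle here beyond bookkeeping of normalization constants; the only mildly delicate point is verifying the proportionality constant $H^{(1)}_\mu$ between $\shJack^{(1)}_\mu$ and $\ShSchur_\mu$, which follows from the vanishing characterization~\eqref{eq:def_ShJack_Vanishing} and the standard evaluation $\ShSchur_\mu(\mu)=H^{(1)}_\mu$ of \cite{OkounkovOlshanskiShiftedSchur}. Once this proportionality is in hand, the rest is a routine manipulation of bases in shifted and usual symmetric function theory.
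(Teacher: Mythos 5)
Your proof is correct, but it takes a genuinely different route from the paper's. The paper argues directly and combinatorially: a semistandard tableau of shape $\la$ and type $\mu 1^{n-k}$ decomposes into a semistandard tableau of shape $\nu\vdash k$ and type $\mu$ together with a standard tableau of skew shape $\la/\nu$, so that $K^\la_{\mu 1^{n-k}}=\sum_{\nu\vdash k}K^\nu_\mu K^{\la/\nu}_{1^n}$, and then the Okounkov--Olshanski evaluation $K^{\la/\nu}_{1^n}/K^\la_{1^n}=\ShSchur_\nu(\la)/(n)_k$ \cite{OkounkovOlshanskiShiftedSchur} finishes the computation in one line. You instead expand both sides in the family $(\Ch_\tau)_{\tau\vdash k}$: the left-hand side via \cref{PropKoaOnCha} at $\a=1$, the right-hand side via the shifted Frobenius formula $\ShSchur_\nu=\sum_{\tau\vdash k}\chi^\nu_\tau z_\tau^{-1}\Ch_\tau$, which you correctly extract from \cref{PropShJackOnCha} after pinning down the constant $\shJack^{(1)}_\nu=H^{(1)}_\nu\,\ShSchur_\nu$ through the vanishing characterization \eqref{eq:def_ShJack_Vanishing} and $\ShSchur_\nu(\nu)=H^{(1)}_\nu$; the statement then reduces to the classical identity $L_{\tau,\mu}=\sum_{\nu\vdash k}K^\nu_\mu\chi^\nu_\tau$, which your two expansions of $\psump_\tau$ in the monomial basis (via \eqref{EqFrobenius}, \eqref{EqSchurInMon} and \eqref{EqPowerOnMon}) establish. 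All ingredients you invoke are either proved in the paper or are standard facts about shifted Schur functions, and the proportionality $\shJack^{(1)}_\nu=H^{(1)}_\nu\,\ShSchur_\nu$ is in any case used (silently) by the paper itself when it specializes \cref{PropShJackOnCha} to $\a=1$ in the proof of \cref{combFormulasInFerayN}, so your argument carries no extra liability. Two small remarks: you do not really need the linear independence of the $\Ch_\tau$ (which does hold, their top homogeneous components being proportional to power sums), since substituting $L_{\tau,\mu}=\sum_\nu K^\nu_\mu\chi^\nu_\tau$ into the expansion of $\Ko_\mu$ already yields the claim in one direction; and the trade-off is that the paper's proof is self-contained and bijective, avoiding character theory altogether, whereas yours makes transparent that the proposition is exactly the $\a=1$ shadow of the change-of-basis formulas of \cref{Subsect:change_bases} combined with a textbook symmetric-function identity.
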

\begin{proof}
  For any diagram $\la$, we have:
\begin{equation}
\Ko_\mu(\lambda)
= (n)_k \dfrac{  K^\lambda_{{\mu1^{n-k}}} }{  K^\lambda_{1^n}   }
= \sum_{\nu \vdash k}  K^\nu_\mu \dfrac{ (n)_k K^{\lambda/\nu}_{1^n} }{ K^\lambda_{1^n}   }
= \sum_{\nu \vdash k}  K^\nu_\mu \ShSchur_\nu(\lambda)
\end{equation}
where $K^{\lambda/\nu}_{1^n}$ is the number of standard Young tableaux of skew shape $\lambda/\mu$.
Indeed, the first equality follows from the definition of $\Ko_\mu$, while the second equality uses the fact that
a semi-standard Young tableau of shape $\lambda$ and type $\mu1^{n-k}$ can be decomposed as
a semi-standard Young tableau of shape $\nu$ and type $\mu$, together with a standard Young tableau of
skew shape $\lambda/\nu$. The last equality comes from \cite[Eq. (0.14)]{OkounkovOlshanskiShiftedSchur},
which states that
\[
 \frac{K^{\lambda/\nu}_{1^n}}{K^\lambda_{1^n}} = \frac{\ShSchur_\nu(\lambda) }{(n)_k}.\qedhere
\]
\end{proof}

\subsection{Shifted symmetric functions are polynomials in multirectangular coordinates}

The purpose of this Section is to establish the statement in its title.
The proof is a straight-forward adaptation of some arguments
of Ivanov and Olshanski for $\a=1$ \cite[Sections 1 and 2]{IvanovOlshanski2002}.
\bigskip

For any positive integer $N$,
we consider the following expression: 
\[ \phi_N(x_1,\dotsc,x_N;z) = \prod_{i=1}^N \frac{z +i-1/2}{z-\a x_i +i-1/2 }. \]
The logarithm of $\phi_N$ has the following expansion around $z=\infty$:
\[ \ln \big(\phi_N(x_1,\dotsc,x_N;z)\big) =
\sum_{k \ge 1} \psump_k^\star(x_1,\dotsc,x_N) z^{-k}/k,\]
where $p_k^\star$ is defined in \cref{EqPkStar}.
Recall that $(\psump_k^\star)_{k \ge 1}$
form an algebraic basis of the $\a$-shifted symmetric function ring.
We also consider the following quotient:
\[\Psi_N(x_1,\dotsc,x_N;z) = \frac{\phi_N(x_1,\dotsc,x_N;z-1/2)}{\phi_N(x_1,\dotsc,x_N;z+1/2)}\]
Again, its logarithm can be expanded around $z=\infty$:
there exist $\a$-shifted symmetric functions $(\pst{k})_{k \ge 1}$ such that
\[\ln \big(\Psi_N(x_1,\dotsc,x_N;z)\big) =
\sum_{k \ge 1} \pst{k}(x_1,\dotsc,x_N) z^{-k}/k.\]
The relation between $(\psump_k^\star)_{k \ge 1}$ and $(\pst{k})_{k \ge 1}$
is discussed in \cite[Proposition 2.7 and Corollary 2.8]{IvanovOlshanski2002}.
In particular, it is easily seen that $\pst{1}=0$
and that $(\pst{k})_{k \ge 2}$ is an algebraic basis 
of the $\a$-shifted symmetric function ring.
\medskip

If $\lambda$ is a Young diagram
with $\ell$ rows, we define $\Psi(\la;z)=\Psi_\ell(\la_1,\dotsc,\la_\ell;z)$.
The following statement is an $\a$-analog of \cite[Proposition 2.6]{IvanovOlshanski2002}.
\begin{proposition}
    Let $\pvec=(p_1,\dotsc,p_d)$ and $\rvec=(r_1,\dotsc,r_d)$ be two lists of non-negative integers.
    Recall that $\rvec^\pvec$ denotes the Young diagram in multirectangular
    coordinates $\rvec$ and $\pvec$; see \cref{FigDiagMulti}.
    Then
    \[\Psi(\rvec^\pvec;z)= \frac{z\, \prod_{s=1}^d \big( z-\a(r_s+\dotsb+r_d) + p_1 +\dotsb +p_s \big) }
    {\prod_{s=1}^{d+1} \big( z-\a(r_{s}+\dotsb+r_d)+p_1 +\dotsb +p_{s-1} \big) }.\]
    \label{PropPsiCorners}
\end{proposition}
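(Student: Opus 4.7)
The plan is to compute $\Psi(\rvec^\pvec;z)$ directly from the definition and observe that the products telescope, both globally and within each rectangular block.

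First, from the definition of $\phi_N$ we have
\[
\Psi_N(\la;z) = \frac{\phi_N(\la;z-1/2)}{\phi_N(\la;z+1/2)}
= \prod_{i=1}^N \frac{(z+i-1)(z+i-\a \la_i)}{(z+i)(z+i-1-\a \la_i)}.
\]
The ``numerator $z+i-1$ over denominator $z+i$'' part gives an immediate global telescoping:
\[
\prod_{i=1}^N \frac{z+i-1}{z+i} = \frac{z}{z+N}.
\]
So it remains to analyze $\prod_{i=1}^N \frac{z+i-\a\la_i}{z+i-1-\a\la_i}$, and this is where multirectangularity enters.

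Second, I would group the rows by the rectangles they belong to. Writing $P_s = p_1+\cdots+p_s$ and $q_s = r_s+\cdots+r_d$ (with $P_0=0$, $q_{d+1}=0$), the Young diagram $\rvec^\pvec$ has $\la_i = q_s$ for $P_{s-1}<i\le P_s$. Setting $b_i = z+i-\a q_s$ inside block $s$, the block contribution is
\[
\prod_{i=P_{s-1}+1}^{P_s}\frac{b_i}{b_i-1}.
\]
Since $\la_i$ is constant on the block, consecutive $b_i$ differ by $1$, so $b_i = b_{i+1}-1$ and the interior factors cancel, leaving
\[
\prod_{i=P_{s-1}+1}^{P_s}\frac{b_i}{b_i-1}
= \frac{z+P_s-\a q_s}{z+P_{s-1}-\a q_s}.
\]
Taking the product over $s=1,\dotsc,d$ and combining with $\frac{z}{z+N}$ (noting that $z+N = z+P_d-\a q_{d+1}$, which corresponds to the $s=d+1$ factor in the denominator of the proposition), I obtain exactly
\[
\Psi(\rvec^\pvec;z) = \frac{z\,\prod_{s=1}^d\bigl(z-\a q_s+P_s\bigr)}{\prod_{s=1}^{d+1}\bigl(z-\a q_s+P_{s-1}\bigr)}.
\]

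The argument is essentially bookkeeping. The only mildly delicate point is checking that the global factor $\frac{z}{z+N}$ combines correctly with the per-block telescoping to match the asymmetric indexing of the numerator (product over $s=1,\dotsc,d$) and denominator (product over $s=1,\dotsc,d+1$) in the stated formula; this is exactly the identification of the ``$s=d+1$ boundary term'' $z+N$ with $z-\a q_{d+1}+P_d$. There is no real obstacle beyond this indexing check.
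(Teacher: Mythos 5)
Your proof is correct and follows essentially the same route as the paper's: the same global telescoping of the factors $\frac{z+i-1}{z+i}$ giving $\frac{z}{z+\ell}$, followed by block-by-block telescoping of $\frac{z-\a\la_i+i}{z-\a\la_i+i-1}$ over the rows of each rectangle, and the same identification of $z+\ell$ with the $s=d+1$ boundary factor via $\ell=p_1+\cdots+p_d$. Nothing further is needed.
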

\begin{proof}
    Straight from the definitions, we have, that, for any Young diagram $\la$ with $\ell$ rows,
    \begin{multline}
        \Psi(\la;z) = \frac{\phi_\ell(\la_1,\dotsc,\la_\ell;z-1/2)}
    {\phi_\ell(\la_1,\dotsc,\la_\ell;z+1/2)}
    =\frac{\prod_{i=1}^\ell (z +i-1)/(z-\a \la_i +i-1)}
    {\prod_{i=1}^\ell (z +i)/(z-\a \la_i +i)}\\
    =\frac{z}{z+\ell} \, \prod_{i=1}^\ell
    \frac{z-\a \la_i +i}{z-\a \la_i +i-1}.
    \label{EqPsiLa}
\end{multline}
Set now $\la=\rvec^\pvec$.
By definition of multirectangular coordinates,
for $i$ between $1$ and $p_1$, one has $\la_i=r_1+\dotsb+r_d$.
The product of the factors in \cref{EqPsiLa} indexed by these values simplifies as follows:
\[
\prod_{i=1}^{p_1} \frac{z-\a \la_i +i}{z-\a \la_i +i-1} = 
\frac{z- \a(r_1+\dotsb+r_d)+p_1}{z-\a(r_1+\dotsb+r_d)}.
\]
More generally, let $s$ be an integer with $1 \le s \le r$.
If $p_1+\cdots+p_{s-1} < i \le p_1 +\dotsb+p_s$, then $\la_i=r_s+\dotsb+r_d$.
Here is the corresponding partial product in \cref{EqPsiLa}:
\[
\prod_{i=p_1+\cdots+p_{s-1}+1}^{p_1+\cdots+p_s} \frac{z-\a \la_i +i}{z-\a \la_i +i-1} =
\frac{z- \a(r_s+\cdots+r_d)+p_1+\cdots+p_s}{z-\a(r_s+\cdots+r_d) +p_1+\cdots+p_{s-1}}.
\]
Putting everything together we get that:
\[
\Psi(\rvec^\pvec;z) =\frac{z}{z+\ell} \, \prod_{s=1}^d
\frac{z- \a(r_s+\cdots+r_d)+p_1+\cdots+p_s}{z-\a(r_s+\cdots+r_d) +p_1+\cdots+p_{s-1}}.
\]
Since the number $\ell$ of rows of the Young diagram $\la=\rvec^\pvec$
is equal to $p_1+\cdots+p_d$, this ends the proof.
\end{proof}
\begin{corollary}
    Let $F$ be an $\alpha$-shifted symmetric function and $d$ a positive integer.
    Then $F(\rvec^\pvec)$ is a polynomial in $p_1,\dotsc,p_d,r_1,\dotsc,r_d$
    with coefficients in $\QQ(\a)$.
    \label{CorolShiftSymPoly}
\end{corollary}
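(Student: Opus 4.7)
The plan is to exploit the algebraic independence of $(\pst{k})_{k\ge 2}$, which is recalled just above the statement. Since these functions form an algebraic basis of the $\alpha$-shifted symmetric function ring over $\QQ(\a)$, any such $F$ is a polynomial in the $\pst{k}$ with coefficients in $\QQ(\a)$. It therefore suffices to prove the corollary for $F=\pst{k}$ for each $k\ge 2$.

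Next, I would read $\pst{k}(\rvec^\pvec)$ off the explicit product formula for $\Psi(\rvec^\pvec;z)$ provided by \cref{PropPsiCorners}. The numerator and denominator of that formula each factor into $d+1$ linear factors in $z$ of the form $z+u_s$ or $z+v_s$, where the $u_s$ and $v_s$ are linear polynomials in $p_1,\dotsc,p_d,r_1,\dotsc,r_d$ with coefficients in $\QQ(\a)$. Applying the expansion $\log(1+u/z)=\sum_{k\ge 1}\tfrac{(-1)^{k+1}}{k}u^k z^{-k}$ to each linear factor and combining yields
\[
\log\Psi(\rvec^\pvec;z)=\sum_{k\ge 1}\frac{(-1)^{k+1}}{k}\Bigl(\sum_s u_s^k-\sum_s v_s^k\Bigr)z^{-k}.
\]
Comparing this with the defining expansion $\log\Psi=\sum_{k\ge 1}\pst{k}(\rvec^\pvec)z^{-k}/k$ identifies $\pst{k}(\rvec^\pvec)$ with $(-1)^{k+1}\bigl(\sum_s u_s^k-\sum_s v_s^k\bigr)$, which is manifestly an element of $\QQ(\a)[p_1,\dotsc,p_d,r_1,\dotsc,r_d]$, as required.

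There is no substantive obstacle here: once \cref{PropPsiCorners} is in hand, the argument is routine formal-power-series bookkeeping, and termwise identification of the two expansions in $1/z$ is immediate. As a sanity check, the known identity $\pst{1}=0$ translates into $\sum_s u_s=\sum_s v_s$, which reflects the fact that the numerator and denominator of $\Psi(\rvec^\pvec;z)$ are monic polynomials of the same degree $d+1$, so that $\Psi(\rvec^\pvec;z)-1=O(z^{-2})$ at infinity.
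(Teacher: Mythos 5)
Your proof is correct and follows essentially the same route as the paper: reduce to $F=\pst{k}$ via the algebraic basis $(\pst{k})_{k\ge 2}$, then deduce polynomiality from the explicit product formula of \cref{PropPsiCorners}, with your explicit $\log(1+u/z)$ expansion merely spelling out the step the paper leaves implicit. (Only your closing sanity remark is slightly loose—monicity and equal degree alone give $\Psi-1=O(z^{-1})$; the $O(z^{-2})$ statement is equivalent to $\sum_s u_s=\sum_s v_s$ rather than a consequence of degree counting—but this plays no role in the argument.)
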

\begin{proof}
    Since $(\pst{k})_{k \ge 2}$ is an algebraic basis of the $\a$-shifted symmetric function ring,
    it is enough to prove this corollary for $F=\pst{k}$. But, by definition,
    \[\ln \big(\Psi(\rvec^\pvec;z)\big) = \sum_{k \ge 1} \pst{k}(\rvec^\pvec) z^{-k}/k.\]
    Therefore, it follows from \cref{PropPsiCorners} that $\pst{k}(\rvec^\pvec)$
    is a polynomial in the variables $p_1,\dotsc,p_d,r_1,\dotsc,r_d$
    with coefficients in $\QQ(\a)$.
\end{proof}

\section{The Schur case: proof of  \texorpdfstring{\cref{thm:mainshifted}}{the case a=1}}
\label{SectAlpha1}

\subsection{Set-partitions}
\label{SubsecSetPartitions}
We first recall a few basic facts about set-partitions.

A \emph{set-partition} $S$ of a ground set $E$ is
a set $\{S_1,\dots,S_s\}$ of disjoint subsets of $E$
whose union is $E$.
The sets $S_i$ are called \emph{blocks} (for $1 \le i \le s$)
and $s=|S|$ is the number of blocks.

Set-partitions of a given ground set $E$
are endowed with a natural partial order,
called the \emph{refinement order}.
By definition, $\tilde{S}\leq S$ if each block of $\tilde{S}$
is included in some block of $S$.
We then say that $\tilde{S}$ is \emph{finer} than $S$
or $S$ is \emph{coarser} than $\tilde{S}$.
It is well-known that the set of set-partitions of $E$
is a lattice with respect to this order.

We will use the following easy facts.
\begin{itemize}
    \item Assume $\tilde{S}\leq S$ and let $S_i$ be a block of $S$.
        Then the set of blocks of $\tilde{S}$ that are included in $S_i$
        is a set-partition of $S_i$.
        We call this set-partition \emph{induced} by $\tilde{S}$ on $S_i$
        and denote it by $\tilde{S}/ S_i$.
        In particular $|\tilde{S}/S_i|$ is the number of blocks of $\tilde{S}$ included in $S_i$.
    \item Conversely, if for each block $S_i$ of $S$,
        we choose a set-partition $\tilde{S}^i$ of $S_i$,
        then the union of the $\tilde{S}^i$ is a set-partition $\tilde{S}$ of $E$
        that is finer than $S$.
    \item Fix a set-partition $S$ of $E$.
      There is a canonical bijection between set-partitions \hbox{$T=\{T_1,\dots,T_t\}$} of $S$
        and set-partitions of $E$ that are coarser than $S$.
        The bijection is constructed by replacing
        each block $T_j=\{S_{i_1},\dots,S_{i_j}\}$ of $T$ by the union
        $S_{i_1} \cup \dots \cup S_{i_j}$.
\end{itemize}

A function $f:S \to D$ associates by definition an element of $D$
with each block of $S$.
Note that $f$ induces canonically a map $\bar{f}$ from the ground set $E$
to $D$ which is constant on each block of $S$.
Conversely, given a function $g:E \to D$,
we denote $\Pi(g)$ the set-partition $\{g^{-1}(\{x\}), x \in g(E)\}$ of $E$.
In other words, $e$ and $e'$ are in the same part of $\Pi(g)$ if and only if
$g(e)=g'(e)$. 
If $S$ is a set-partition of $E$ and $f$ a function from $S$ to $D$,
then we have the relation: $S \le \Pi\big(\bar{f}\big)$.

\subsection{A family of polynomials indexed by pairs of permutations}
\label{SubsectN}

Let $d$ and $k$ be fixed positive integers
and $S$ and $T$ be set-partitions of $[k]$.
Two maps
\[
 v:S \to [d] \qquad w:T \to [d]
\]
are said to be \emph{compatible}
if $v(S_i) \leq w(T_j)$
whenever $S_i$ and $T_j$
are blocks of $S$ and $T$ with a non-empty intersection.

Now fix two permutations $\sigma$ and $\tau$ in $\perms_k$.
Their sets of cycles $C(\sigma)$ and $C(\tau)$
can be interpreted as set-partitions of $[k]$
by treating each cycle as a block.

Now we define $N_{\sigma,\tau}(\rvec,\pvec)$ as
\begin{equation}\label{EqFerayNDef}
 N_{\sigma,\tau}(\rvec,\pvec) = \sum_{\substack{v:C(\sigma) \to [d] \\ w :C(\tau) \to [d] \\ v, w\text{ compatible}}} \left(\prod_{\sigma_i \in C(\sigma)} p_{v(\sigma_i)} \right) \left(\prod_{\tau_j \in C(\tau)} r_{w(\tau_j)}\right).
\end{equation}
Note that $N_{\sigma,\tau}(\rvec,\pvec) = N_{\sigma',\tau}(\rvec,\pvec)$
whenever the cycles of $\sigma$ and $\sigma'$ induce the same set-partition of $[k]$.
The same statement holds for $\tau$ and $\tau'$.
Also if $\rho$ is a permutation of $k$, then
$N_{\rho^{-1} \sigma \rho ,\rho^{-1} \tau \rho}(\rvec,\pvec) = N_{\sigma,\tau}(\rvec,\pvec)$.


The relevance of this family of polynomials will be evident in the next subsection.
We will first give a second expression for $N_{\sigma,\tau}(\rvec,\pvec)$.

\begin{lemma}\label{LemFerayN2}
For $\sigma, \tau \in \perms_k$, the polynomial $N_{\sigma,\tau}(\rvec,\pvec)$ is given by
\begin{equation}\label{eq:nst}
N_{\sigma,\tau}(\rvec,\pvec) =
\sum_{\substack{ S, T \text{ set-partitions of } [k] \\ C(\sigma) \leq S \\ C(\tau) \leq T }}
\quad
\sum_{
\substack{v:S \hookrightarrow [d] \\ w : T \hookrightarrow [d] \\ v, w\text{ compatible}\\ \text{and injective}}
}
\quad
\prod_{\substack{S_i \in S \\ T_j \in T }} p^{|C(\sigma)/S_i|}_{v(S_i)} q^{|C(\tau)/T_j|}_{w(T_j)}.
\end{equation}
The middle sum is performed over all compatible \emph{injective} maps $v$, $w$
defined on $S$ and $T$, respectively.
\end{lemma}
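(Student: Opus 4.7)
The plan is to show that the right-hand side of (\ref{eq:nst}) is obtained from the definition (\ref{EqFerayNDef}) by a bijective reindexing of the sum: each compatible pair $(v,w)$ appearing in (\ref{EqFerayNDef}) is classified by its fiber structure, which is recorded as a pair of set-partitions $(S,T)$ of $[k]$ coarsening $(C(\sigma),C(\tau))$, together with a compatible injective pair $(\bar v,\bar w)$ defined on these coarser partitions. This is essentially a bookkeeping argument, using the canonical correspondences recalled in Section~\ref{SubsecSetPartitions}.

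Concretely, starting from a compatible pair $v:C(\sigma)\to[d]$, $w:C(\tau)\to[d]$, I would form the set-partition $\Pi(v)$ of $C(\sigma)$ induced by $v$, and then apply the canonical bijection of Section~\ref{SubsecSetPartitions} to view it as a set-partition $S$ of $[k]$ with $C(\sigma)\leq S$; explicitly, the blocks of $S$ are obtained by unioning all cycles of $\sigma$ sharing the same $v$-value. Then $v$ factors through an injection $\bar v:S\hookrightarrow[d]$, and one defines $T$ and $\bar w$ analogously from $w$. Conversely, any quadruple $(S,T,\bar v,\bar w)$ as on the right-hand side of (\ref{eq:nst}) determines $(v,w)$ by precomposing $\bar v,\bar w$ with the natural projections $C(\sigma)\to S$ and $C(\tau)\to T$. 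These two constructions are inverse to each other.

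Next, I would verify that compatibility translates correctly under this correspondence. If $(v,w)$ are compatible and $S_i\cap T_j\neq\emptyset$, pick $x\in S_i\cap T_j$ and let $\sigma_a,\tau_b$ be the cycles containing $x$; since $S_i$ is a union of $\sigma$-cycles, $\sigma_a\subseteq S_i$, so $\bar v(S_i)=v(\sigma_a)$, and similarly $\bar w(T_j)=w(\tau_b)$. Compatibility of $(v,w)$ at the pair $(\sigma_a,\tau_b)$ then yields $\bar v(S_i)\leq \bar w(T_j)$. The reverse implication is immediate, since every pair $(\sigma_a,\tau_b)$ with $\sigma_a\cap\tau_b\neq\emptyset$ lies inside some pair of blocks $(S_i,T_j)$ with $S_i\cap T_j\neq\emptyset$.

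Finally, it remains to track monomials. Under the bijection, each block $S_i$ of $S$ contains exactly $|C(\sigma)/S_i|$ cycles of $\sigma$, each contributing a factor $p_{v(\sigma_a)}=p_{\bar v(S_i)}$ to the monomial in (\ref{EqFerayNDef}); so the $p$-factors regroup as $\prod_{S_i\in S} p_{\bar v(S_i)}^{|C(\sigma)/S_i|}$, and similarly the $r$-factors regroup into $\prod_{T_j\in T} r_{\bar w(T_j)}^{|C(\tau)/T_j|}$ (the $q$ in the statement should read $r$). Splitting the sum in (\ref{EqFerayNDef}) according to the pair $(S,T)$ then produces exactly (\ref{eq:nst}). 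No real obstacle arises; the only point requiring a moment of care is the two-way check of compatibility, which is the only nontrivial structural property preserved by the reindexing.
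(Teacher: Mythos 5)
Your proof is correct and takes essentially the same route as the paper's: the paper also replaces the sum over $v:C(\sigma)\to[d]$ (resp.\ $w:C(\tau)\to[d]$) by a double sum over the coarsening $S=\Pi(\bar v)$ (resp.\ $T=\Pi(\bar w)$) together with the induced injective map, and observes that compatibility is preserved in both directions. Your side remark that the $q$ in \eqref{eq:nst} should read $r$ is also well taken, since the left-hand side is $N_{\sigma,\tau}(\rvec,\pvec)$ and no change to the coordinates $q_i=r_i+\dots+r_d$ is performed in the proof.
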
 
\begin{proof}
Consider a map $v: C(\sigma) \to [d]$.
Recall that it corresponds to a map $\bar{v}: [k] \to [d]$,
constant on blocks of $C(\sigma)$.
This map $\bar{v}$ corresponds to a unique pair $(S,v')$,
where $S$ is a set-partition of $[k]$ and $v'$ an \emph{injective}
map from $S$ to $d$.
Indeed, necessarily $S=\Pi(\bar{v})$. 

Therefore, the sum over $v:C(\sigma) \to [d]$ in \eqref{EqFerayNDef}
can be replaced by a double sum over $S$ and $v'$ as above.
Similarly, the sum over $w : C(\tau) \rightarrow [d]$ can be replaced
by a double sum over $T$ and $w'$ with $T \ge C(\tau)$ and
$w': T \rightarrow [d]$ injective.
Lastly, $v$ and $w$ are compatible if and only if $v'$ and $w'$ are,
which completes the proof.
\end{proof}

\begin{remark}
    \label{RkMax}
    In its original paper \cite{Stanley2003}, Stanely uses a slightly different
    version of multirectangular coordinates, namely $\pvec$ and $\qvec$
    where $q_i=r_i+r_{i+1} +\ldots$.
    In terms of these coordinates, the definition of $N$ is rewritten as:
\begin{equation}
 N_{\sigma,\tau}(\qvec,\pvec) =
 \sum_{\substack{v:C(\sigma) \to [d] \\ w :C(\tau) \to [d] \\ v, w\text{ max-compatible}}} 
 \left(\prod_{\sigma_i \in C(\sigma)} p_{v(\sigma_i)} \right) \left(\prod_{\tau_j \in C(\tau)} q_{w(\tau_j)}\right),
\end{equation}
where $v, w\text{ max-compatible}$ means that, for $\tau_j \in C(\tau)$,
\[w(\tau_j) = \max_{\substack{\sigma_i \in C(\sigma) \\ \sigma_i \cap \tau_j \neq \emptyset}} v(\sigma_i).\]
In particular it corresponds to the quantity $N^{\la}(\si,\tau)$ considered, \emph{e.g.}, in \cite{NousBoundsOnCharacters}.
\end{remark}

\subsection{Combinatorial formulas}
\label{SectCombForulasA1}

Given a partition $\mu \vdash k$, 
let $\pi_\mu$ denote an arbitrarily chosen permutation 
in $\perms_k$ with cycle type $\mu$.
We also arbitrarily choose a set-partition $U_\mu$,
so that the lengths of its part, sorted in decreasing order, are given by $\mu$.
Denote the sign of a permutation $\sigma$ with $\eps(\sigma)$.

\begin{proposition}\label{combFormulasInFerayN}
The polynomials $\Ch_\mu(\rvec^\pvec)$, $\ShSchur_\mu(\rvec^\pvec)$ and $\Ko_\mu(\rvec^\pvec)$
are expressed in terms of $N_{\sigma,\tau}$ as follows:
\begin{align}
\Ch_\mu(\rvec^\pvec) &= \sum_{\substack{\sigma, \tau \in \perms_k \\ \sigma \tau = \pi_\mu}} \eps(\tau)N_{\sigma,\tau}(\rvec,\pvec),
\label{EqChN}\\
    \ShSchur_\mu(\rvec^\pvec) &= \frac{1}{k!}  \sum_{ \substack{\sigma,\tau \in \perms_k} } \chi^\mu( \sigma \tau )  \eps(\tau) N_{\sigma,\tau}(\rvec,\pvec),
\label{EqShSchurN}\\
\Ko_\mu(\rvec^\pvec) &= \frac{1}{\prod_{i=1}^{\ell(\mu)} \mu_i!}
\sum_{\substack{\sigma, \tau \in \perms_k \\ C(\sigma \tau) \leq U_\mu}} \eps(\tau)N_{\sigma,\tau}(\rvec,\pvec).
     \label{EqKoN}
\end{align}
\end{proposition}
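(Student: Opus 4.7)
The plan is to take \eqref{EqChN} as the starting point---it is the combinatorial formula for $\Ch_\mu$ in multirectangular coordinates, conjectured by Stanley and proved in \cite{FerayPreuveStanley} (compare \cref{EqChRectangle} in the rectangular case)---and then to derive \eqref{EqShSchurN} and \eqref{EqKoN} from it by expanding $\ShSchur_\mu$ and $\Ko_\mu$ in the basis $(\Ch_\nu)_{\nu\vdash k}$, using the change-of-basis identities from \cref{Subsect:change_bases}, and finally rearranging the resulting sum combinatorially.

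For \eqref{EqShSchurN}, I would specialize \cref{PropShJackOnCha} to $\a=1$. Combined with $\jackJ^{\sh,(1)}_\mu = H^{(1)}_\mu\cdot \ShSchur_\mu$ and the classical Frobenius relation $\theta^{(1)}_\nu(\mu) = H^{(1)}_\mu\,\chi^\mu_\nu/z_\nu$, this yields the shifted Frobenius-type expansion
\[
\ShSchur_\mu = \sum_{\nu \vdash k}\frac{\chi^\mu_\nu}{z_\nu}\,\Ch_\nu.
\]
Evaluating at $\rvec^\pvec$ and invoking \eqref{EqChN} gives a double sum indexed by $\nu$ and by pairs $(\sigma,\tau)$ with $\sigma\tau=\pi_\nu$. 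Since $N_{\sigma,\tau}$ is conjugation-invariant and the number of permutations in $\perms_k$ of cycle-type $\nu$ equals $k!/z_\nu$, one can rewrite
\[
\sum_{\nu\vdash k}\frac{\chi^\mu_\nu}{z_\nu}\sum_{\sigma\tau=\pi_\nu}\eps(\tau)N_{\sigma,\tau}
= \frac{1}{k!}\sum_{\pi\in\perms_k}\chi^\mu(\pi)\sum_{\sigma\tau=\pi}\eps(\tau)N_{\sigma,\tau},
\]
after which the inner sum collapses into an unconstrained sum over pairs $(\sigma,\tau)\in\perms_k\times\perms_k$, yielding \eqref{EqShSchurN}.

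For \eqref{EqKoN}, \cref{PropKoaOnCha} at $\a=1$ gives $\Ko_\mu = \sum_\nu (L_{\nu,\mu}/z_\nu)\,\Ch_\nu$; the very same manipulation then produces
\[
\Ko_\mu(\rvec^\pvec) = \frac{1}{k!}\sum_{\sigma,\tau\in\perms_k}L_{\type(\sigma\tau),\mu}\,\eps(\tau)\,N_{\sigma,\tau}(\rvec,\pvec).
\]
The key combinatorial step is to rewrite $L_{\type(\pi),\mu}$ as
\[
L_{\type(\pi),\mu} = \frac{1}{\prod_i \mu_i!}\,\#\bigl\{\rho\in\perms_k : C(\pi)\leq \rho\,U_\mu\,\rho^{-1}\bigr\}.
\]
Indeed, by \cref{LemLPerm} the left-hand side counts functions $f:[k]\to[\ell(\mu)]$ with $|f^{-1}(i)|=\mu_i$ that are constant on cycles of $\pi$. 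Fixing an enumeration $U_{\mu,1},\dotsc,U_{\mu,\ell(\mu)}$ of the blocks of $U_\mu$ with $|U_{\mu,i}|=\mu_i$, each such $f$ lifts to exactly $\prod_i\mu_i!$ permutations $\rho$ whose restriction to $U_{\mu,i}$ is a bijection onto $f^{-1}(i)$; the condition that $f$ is constant on cycles of $\pi$ then translates precisely to $C(\pi)\leq \rho\,U_\mu\,\rho^{-1}$. After substituting this identity, swapping the order of summation, and applying the change of variables $(\sigma,\tau)\mapsto(\rho^{-1}\sigma\rho,\rho^{-1}\tau\rho)$---under which $\eps(\tau)$ and $N_{\sigma,\tau}$ are unchanged---the sum over $\rho$ contributes a factor of $k!$ and leaves exactly $\tfrac{1}{\prod_i\mu_i!}\sum_{C(\sigma\tau)\leq U_\mu}\eps(\tau)N_{\sigma,\tau}(\rvec,\pvec)$.

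The main difficulty is purely the bookkeeping in this last step: translating \cref{LemLPerm}'s description of $L_{\nu,\mu}$ via ordered set-partitions into a statement about conjugates of the fixed set-partition $U_\mu$. Once that identity is pinned down, both \eqref{EqShSchurN} and \eqref{EqKoN} follow from the same template---a Frobenius-like expansion in $(\Ch_\nu)_{\nu\vdash k}$ combined with the conjugation-invariance of $N_{\sigma,\tau}$---with no further input beyond \cref{SectPreliminaries}.
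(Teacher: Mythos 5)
Your proposal is correct and follows essentially the same route as the paper: \eqref{EqChN} is quoted from \cite{FerayPreuveStanley}, \eqref{EqShSchurN} and \eqref{EqKoN} are obtained by specializing \cref{PropShJackOnCha} and \cref{PropKoaOnCha} at $\a=1$ and then regrouping via the conjugation-invariance of $\eps(\tau)N_{\sigma,\tau}$, with \cref{LemLPerm} supplying the interpretation of $L_{\type(\sigma\tau),\mu}$. The only (harmless) cosmetic difference is in the last bookkeeping step for \eqref{EqKoN}, where you sum over the $\prod_i\mu_i!$ permutations $\rho$ inducing each fiber-function $f$, whereas the paper sums over the $k!/\prod_i\mu_i!$ functions $f$ themselves and notes each is of the form $f_0\circ\rho$.
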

\begin{remark}
    \cref{EqKoN} is not used later in the proof.
    We give it only for completeness.
\end{remark}
\begin{proof}
\cref{EqChN} is the main result of \cite{FerayPreuveStanley};
see \cite{NousBoundsOnCharacters} for a more elementary alternate proof.

Specializing \cref{PropShJackOnCha} for $\a=1$, we get
\begin{equation}
\ShSchur_\mu(\rvec^\pvec) = \sum_{\nu \vdash k } \frac{ \chi^\mu_\nu }{z_\nu} \Ch_\nu(\lambda)  =
\sum_{\nu \vdash k } \frac{ \chi^\mu_\nu }{z_\nu} \sum_{ \substack{\sigma,\tau \in \perms_k \\ \sigma\tau = \pi_\nu}} \eps(\tau) N_{\sigma,\tau}(\rvec,\pvec).
\label{EqTechS1}
\end{equation}
On the other hand,
\begin{equation}
    \frac{1}{k!}  \sum_{ \substack{\sigma,\tau \in \perms_k} } \chi^\mu( \sigma \tau )  \eps(\tau) N_{\sigma,\tau}(\rvec,\pvec)
= \frac{1}{k!} \sum_{\pi \in \Sn_k} \chi^\mu(\pi) \left[ \sum_{ \substack{\sigma,\tau \in \perms_k \\ \sigma \tau = \pi} } \eps(\tau) N_{\sigma,\tau}(\rvec,\pvec) \right]. 
\label{EqTechSS}
\end{equation}
Since $N$ is invariant by simultaneous conjugacy of $(\sigma,\tau)$,
the expression in the bracket depends only on the conjugacy class $\pi$.
So does $\chi^\mu(\pi)$.
For each partition $\nu \vdash k$, there are $k!/z_\nu$ permutations conjugated to $\pi_\nu$.
Therefore, \cref{EqTechSS} rewrites
\begin{equation}
    \frac{1}{k!}  \sum_{ \substack{\sigma,\tau \in \perms_k} } \chi^\mu( \sigma \tau )  \eps(\tau) N_{\sigma,\tau}(\rvec,\pvec)
    = \sum_{\nu \vdash k}\frac{\chi^\mu(\pi_\nu)}{z_\nu}
    \left[ \sum_{ \substack{\sigma,\tau \in \perms_k \\ \sigma \tau = \pi_\nu} }
    \eps(\tau) N_{\sigma,\tau}(\rvec,\pvec) \right]. 
    \label{EqTechS2}
\end{equation}
Comparing \cref{EqTechS1,EqTechS2} gives \cref{EqShSchurN} --- note that $\chi^\mu_\nu=\chi^\mu(\pi_\nu)$ by definition.

Specializing \cref{PropKoaOnCha} for $\a=1$, we get
\begin{equation}
    \Ko_\mu(\rvec^\pvec) = \sum_{\nu \vdash k } \frac{ L_{\nu,\mu} }{z_\nu} \Ch_\nu(\lambda)  =
    \sum_{\nu \vdash k } \frac{ L_{\nu,\mu}}{z_\nu} 
    \sum_{ \substack{\sigma,\tau \in \perms_k \\ \sigma\tau = \pi_\nu}} \eps(\tau) N_{\sigma,\tau}(\rvec,\pvec).
\end{equation}
Using the same trick as above and denoting $\type(\sigma\tau)$ the cycle type of the permutation $\sigma\tau$,
we get
\[\Ko_\mu(\rvec^\pvec) = \frac{1}{k!} \sum_{\sigma,\tau \in S_d} \eps(\tau) 
\, L_{\type(\sigma\tau),\mu} \, N_{\sigma,\tau}(\rvec,\pvec). \]
From \cref{LemLPerm},
$L_{\type(\sigma\tau),\mu}$ is the number of functions 
$f:[k] \to [\ell(\mu)]$ with $C(\sigma\tau) \le \Pi(f)$
and such that each number $i$ has exactly $\mu_i$ pre-images by $f$.
Hence
\begin{equation}
    \Ko_\mu(\rvec^\pvec) = \frac{1}{k!} 
\sum_{ \substack{f:[k] \to [\ell(\mu)] \\ |f^{-1}(i)|=\mu_i}}
\left[ \sum_{ \substack{ \sigma,\tau \in S_d \\ C(\sigma\tau) \le \Pi(f) }}
 \eps(\tau) \, N_{\sigma,\tau}(\rvec,\pvec) \right].
 \label{TechKo}
 \end{equation}
 Fix some function $f_0$ in the first sum.
 Any other function $f$ with $|f^{-1}(i)|=\mu_i$ (for all $i \le \ell(\mu)$)
 writes as $f_0 \circ \rho$ for some permutation $\rho \in S_k$.
 Then a pair $(\sigma,\tau)$ fulfills $C(\sigma\tau) \le \Pi(f)$
 if and only if $(\sigma',\tau') \coloneqq (\rho \sigma \rho^{-1}, \rho\tau \rho^{-1})$ fulfills
 $C(\sigma'\tau') \le \Pi(f_0)$.
 As $\eps(\tau)$ and $N_{\sigma,\tau}(\rvec,\pvec)$ are invariant by simultaneous conjugation
 of $\sigma$ and $\tau$, the expression in the bracket in \eqref{TechKo}
 does not depend of $f$.
 The number of functions $f$ with $|f^{-1}(i)|=\mu_i$ (for all $i \le \ell(\mu)$)
 is $k!/\prod_{i=1}^{\ell(\mu)} \mu_i!$, so that
 \eqref{TechKo} can be rewritten as:
 \[\Ko_\mu(\rvec^\pvec) = \frac{1}{\prod_{i=1}^{\ell(\mu)} \mu_i!} 
\left[ \sum_{ \substack{ \sigma,\tau \in S_d \\ C(\sigma\tau) \le \Pi(f_0) }}
 \eps(\tau) \, N_{\sigma,\tau}(\rvec,\pvec) \right].\]
Since the only condition on $f_0$ is that the block sizes of $\Pi(f_0)$
are given by $\mu$,
we can assume without loss of generality that $\Pi(f_0)=U_\mu$.
This conclude the proof of \cref{EqKoN}.
\end{proof}

\bigskip

\subsection{Positivity of  \texorpdfstring{$\ShSchur_\mu$}{shifted Schur} and  \texorpdfstring{$\Ko_\mu$}{Kostka} in the falling factorial basis}
\label{SubsectProofAUn}
Let us start by $\ShSchur_\mu$.
We first express the shifted Schur polynomial $\ShSchur_\mu$ as a sum of smaller pieces,
each of which will be proved to be nonnegative in the falling factorial basis.
For this, we introduce a new family of polynomials. Let $\mu \vdash k$ and $S$, $T$ be set-partitions of $[k]$
with respectively $s$ and $t$ parts. Define $A^\mu_{S,T}$ as
\begin{equation}
A^\mu_{S,T}(x_1,\dotsc,x_s,y_1,\dotsc,y_t) =  \sum_{ \substack{\sigma,\tau \in \perms_k \\ C(\sigma) \leq S \\ C(\tau) \leq T }}
\chi^\mu( \sigma \tau )  \epsilon(\tau)
\prod_{\substack{S_i \in S \\ T_j \in T }} x^{|C(\sigma)/S_i|}_{i} y^{|C(\tau)/T_j|}_{j}.
\label{EqDefA}
\end{equation}
Note that combining \cref{EqShSchurN} and \cref{eq:nst} 
and rearranging sums yields:
\begin{equation}
\ShSchur_\mu (\rvec^\pvec) = \frac{1}{k!} \sum_{\substack{ S, T \\ \text{set-partitions}\\ \text{of } [k]}}
\sum_{\substack{v:S \hookrightarrow [d] \\ w : T \hookrightarrow [d] \\ v, w\text{ compatible}\\ \text{and injective}}}
A^{\mu}_{S,T}\left( p_{v(S_1)},\dotsc,p_{v(S_s)}, r_{w(T_1)},\dotsc,r_{w(T_t)} \right).
\label{eq:ShSchur_A}
\end{equation}
Observe that if $A^\mu_{S,T}(x_1,\dotsc,x_s,y_1,\dotsc,y_t)$ is non-negative
in the falling factorial basis, then so is $\ShSchur_\mu$ --- 
it is crucial here that the inner sum is over injective functions,
so that we always evaluate $A_{S,T}^\mu$ in distinct variables $\bm{p}$ and 
$\bm{r}$;
otherwise, the expression of  $A_{S,T}^\mu$ in the falling factorial basis
in $\bm{x}$ and $\bm{y}$
would not directly yield the expression of $\ShSchur_\mu$
in the falling factorial basis in $\bm{p}$ and $\bm{r}$.

Hence, it suffices to examine $A^\mu_{S,T}$ in the falling factorial basis.
\begin{lemma}
    Fix some set-partitions $S$ and $T$ of $[k]$ and an integer partition $\mu \vdash k$.
    Then
\begin{equation}
A^\mu_{S,T}(x_1,\dotsc,x_s,y_1,\dotsc,y_t) = \sum_{ \substack{ \tilde{S} \leq S \\ \tilde{T} \leq T }} \left(
\sum_{ \substack{\sigma,\tau \in \perms_k \\ C(\sigma) \leq \tilde{S} \\ C(\tau) \leq \tilde{T} }}
\chi^\mu( \sigma \tau )  \epsilon(\tau) \right)\,
\left( \prod_{\substack{S_i \in S \\ T_j \in T }} (x_i)_{|\tilde{S}/S_i|} (y_i)_{|\tilde{T}/T_i|} \right).
\label{EqAFF}
\end{equation}
\label{LemAFF}
\end{lemma}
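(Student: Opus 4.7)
The plan is to expand each monomial factor in the definition \eqref{EqDefA} of $A^\mu_{S,T}$ into the falling factorial basis by means of the classical identity
\[x^n = \sum_{k=0}^n \stirling{n}{k} (x)_k = \sum_{P} (x)_{|P|},\]
where the last sum ranges over all set-partitions $P$ of an $n$-element set. Applying this identity with $n = |C(\sigma)/S_i|$, and viewing the set being partitioned as the set of cycles of $\sigma$ contained in $S_i$, each term yields a set-partition $P_i$ of $C(\sigma)/S_i$, and $(x_i)_{|P_i|}$.

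The next step is to package these local choices into a global one. By the third bullet of \cref{SubsecSetPartitions}, a set-partition $P_i$ of the set of cycles $C(\sigma)/S_i$ corresponds canonically to a set-partition of $S_i$ that is coarser than $C(\sigma)/S_i$, namely by taking unions of cycles in the same block of $P_i$. Assembling such partitions over all blocks $S_i \in S$ then gives, by the second bullet of that same subsection, a single set-partition $\tilde{S}$ of $[k]$ satisfying $C(\sigma) \leq \tilde{S} \leq S$, and moreover $|\tilde{S}/S_i| = |P_i|$. The same reasoning applied to $\tau$ and $T$ yields a set-partition $\tilde{T}$ with $C(\tau) \leq \tilde{T} \leq T$. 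In particular,
\[\prod_{S_i \in S} x_i^{|C(\sigma)/S_i|} \prod_{T_j \in T} y_j^{|C(\tau)/T_j|} = \sum_{\substack{\tilde{S} \leq S,\, \tilde{T} \leq T \\ C(\sigma) \leq \tilde{S},\, C(\tau) \leq \tilde{T}}} \prod_{S_i \in S} (x_i)_{|\tilde{S}/S_i|} \prod_{T_j \in T} (y_j)_{|\tilde{T}/T_j|}.\]

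Substituting this into \eqref{EqDefA} and exchanging the summation over $(\sigma,\tau)$ with the one over $(\tilde{S},\tilde{T})$, the outer constraints $C(\sigma) \leq S$ and $C(\tau) \leq T$ become automatic (since $\tilde{S} \leq S$ and $\tilde{T} \leq T$), and we are left precisely with \eqref{EqAFF}. I do not anticipate any significant obstacle: the argument is a direct application of the Stirling-number expansion together with the standard correspondence between partitions of a block set and coarsenings, both of which were recalled explicitly in \cref{SubsecSetPartitions}. The only bookkeeping to be careful about is to verify that the conditions $C(\sigma) \leq \tilde{S}$ produced locally on each $S_i$ combine correctly to the global condition on $[k]$, which is immediate from the construction.
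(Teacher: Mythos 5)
Your proposal is correct and follows essentially the same route as the paper's own proof: expand each power $x_i^{|C(\sigma)/S_i|}$ via the Stirling/set-partition identity, reassemble the local partitions of the blocks $S_i$ into a single set-partition $\tilde{S}$ with $C(\sigma)\leq\tilde{S}\leq S$ (and likewise for $\tau$, $T$), then substitute into \eqref{EqDefA} and exchange the order of summation. No gaps to report.
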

\begin{proof}
    Recall that the \emph{Stirling number of the second kind} $\stirling{j}{r}$
    counts set-partitions of a ground set $E$ of cardinality $j$ into $r$ blocks.
    Besides, it is well-known that, for any $j \ge 0$
    \[ x^j = \sum_{r=1} \stirling{j}{r} (x)_r = \sum_S (x)_{|S|},\]
    where the second sum runs over set-partitions of $E$.

    Now consider a permutation $\sigma$ with $C(\sigma) \le S$
    and fix $i$ between $1$ and $S$.
    Let $j=|C(\sigma)/S_i|$; then $\sigma$
    has exactly $j$ cycles $C_{i,1},\dots,C_{i,j}$
         that are included in $S_i$ (we see $C_{i,1},\dots,C_{i,j}$ as sets).
             Let $\tilde{S}^i$ be a set-partition of $\{C_{i,1},\dots,C_{i,j}\}$.
    Equivalently, $\tilde{S}^i$ can be seen as a set-partition of $S_i$
    that is coarser than $\{C_{i,1},\dots,C_{i,j}\}$.
    We have
    \[x_i^{|C(\sigma)/S_i|} = \sum_{\tilde{S}^i} (x_i)_{|\tilde{S}^i|},\]
    where the sum runs over set-partitions $\tilde{S}^i$ as above.
    Multiplying over $i$, we get:
    \[ \prod_{i=1}^s x_i^{|C(\sigma)/S_i|} = \sum_{(\tilde{S}^1,\dots,\tilde{S}^s)}\
    \prod_{i=1}^s (x_i)_{|\tilde{S}^i|}.\]
    Here the sum runs over $s$-tuples $(\tilde{S}^1,\dots,\tilde{S}^s)$,
    where $\tilde{S}^i$ is a set-partition of $S_i$ coarser than $C(\sigma)/S_i$.
    Such an $s$-tuple can be interpreted as a set-partition $\tilde{S}$ of $[k]=\bigsqcup S_i$,
    that is finer than $S$ but coarser than $C(\sigma)$
    --- see \cref{SubsecSetPartitions}.
    Finally,
    \[ \prod_{i=1}^s x_i^{|C(\sigma)/S_i|} =
    \sum_{\substack{ \tilde{S} \text{ set-partition of }[k] \\ C(\sigma) \le \tilde{S} \le S }}\,
    \prod_{i=1}^s (x_i)_{|\tilde{S}/S_i|}. \]
    Similarly,
    \[ \prod_{j=1}^t x_j^{|C(\tau)/T_j|} =
    \sum_{\substack{ \tilde{T} \text{ set-partition of }[k] 
    \\ C(\tau) \le \tilde{T} \le T}}\,
    \prod_{j=1}^t (x_j)_{|\tilde{T}/T_j|}. \]
    Plugging these two formulas in \cref{EqDefA} and changing the order of summation,
    we get \cref{EqAFF}.
\end{proof}

Note that the inner sum in \cref{EqAFF} does not depend on $S$ and $T$, but only on $\tilde{S}$ and $\tilde{T}$.
We prove that it is nonnegative in the following lemma.
If $S$ is a set-partition of $[k]$, let $\Sn_S \subseteq \Sn_k$ denote the subgroup
$\Sn_S = \{ \sigma \in \Sn_k : C(\sigma) \leq S \}$. 
\begin{lemma}\label{lem:nonnegsum}
For every $\mu \vdash k$  and pair of set-partitions $S$ and $T$ of $[k]$, the integer $B^\mu_{S,T}$ defined as
\begin{equation}
B^\mu_{S,T}=\sum_{ \substack{\sigma \in \Sn_S \\ \tau \in \Sn_T }}
\chi^\mu( \sigma \tau )  \epsilon(\tau)
\label{EqDefB}
\end{equation}
is non-negative.
\end{lemma}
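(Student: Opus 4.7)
My plan is to interpret $B^\mu_{S,T}$ as the trace of a product of two orthogonal projectors on the irreducible representation $V^\mu$, and then invoke the general fact that such a trace is always non-negative.

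More precisely, consider the two elements of the group algebra $\setC[\Sn_k]$:
\[
e_S = \sum_{\sigma \in \Sn_S} \sigma, \qquad a_T = \sum_{\tau \in \Sn_T} \epsilon(\tau)\, \tau.
\]
Let $\rho^\mu : \Sn_k \to \mathrm{GL}(V^\mu)$ be the irreducible representation indexed by $\mu$. Since $\chi^\mu(\sigma\tau) = \trace_{V^\mu} \rho^\mu(\sigma\tau)$, expanding linearly gives
\[
B^\mu_{S,T} = \trace_{V^\mu}\bigl(\rho^\mu(e_S)\, \rho^\mu(a_T)\bigr).
\]

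Next I would verify that $P_S := \tfrac{1}{|\Sn_S|} \rho^\mu(e_S)$ and $Q_T := \tfrac{1}{|\Sn_T|} \rho^\mu(a_T)$ are orthogonal projectors with respect to any $\Sn_k$-invariant Hermitian inner product on $V^\mu$ (such an inner product exists since $\Sn_k$ is finite, and makes each $\rho^\mu(\pi)$ unitary). Idempotence follows from $e_S^2 = |\Sn_S|\, e_S$ and $a_T^2 = |\Sn_T|\, a_T$, which are standard for a subgroup sum and a signed subgroup sum. Self-adjointness follows from $\rho^\mu(\pi)^* = \rho^\mu(\pi^{-1})$ together with the facts that $\Sn_S$ and $\Sn_T$ are closed under inversion and $\epsilon(\tau^{-1}) = \epsilon(\tau)$. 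Geometrically, $P_S$ projects onto the $\Sn_S$-invariants of $V^\mu$, and $Q_T$ projects onto the $\epsilon$-isotypic subspace of $V^\mu$ under $\Sn_T$.

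Finally, I would use the elementary fact that if $P$ and $Q$ are orthogonal projectors on a finite-dimensional Hermitian inner product space, then
\[
\trace(PQ) = \trace(P^2 Q) = \trace(PQP) = \trace\bigl((PQ)(PQ)^*\bigr) \geq 0,
\]
since the last expression is the Hilbert--Schmidt norm squared of $PQ$. Applying this to $P_S$ and $Q_T$ yields
\[
B^\mu_{S,T} \;=\; |\Sn_S|\,|\Sn_T|\, \trace_{V^\mu}(P_S Q_T) \;\geq\; 0.
\]

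I do not foresee a genuine obstacle here: the representation-theoretic reformulation is mechanical, and the positivity is a one-line linear-algebra argument once one has the two orthogonal projectors in hand. The only step requiring a tiny bit of care is checking that $a_T$ really gives a self-adjoint operator under a unitary realization (which reduces to $\epsilon(\tau^{-1}) = \epsilon(\tau)$ and $\Sn_T$ being closed under inverses).
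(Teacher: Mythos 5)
Your proof is correct. It reaches the same positivity principle as the paper, but by a genuinely different (and arguably cleaner) route. The paper works entirely inside the group algebra $\setC[\Sn_k]$: it writes $B^\mu_{S,T}=[\id]\,X_\mu Y_S \Sigma_T$ with $X_\mu=\sum_\pi \chi^\mu(\pi^{-1})\pi$, and needs three inputs: quasi-idempotence of all three elements (including $X_\mu^2=\bigl(n!/\chi^\mu_{1^k}\bigr)X_\mu$), the \emph{centrality} of $X_\mu$, and the positivity $[\id]\,AA^*\ge 0$. You instead push everything into the irreducible module $V^\mu$, writing $B^\mu_{S,T}=\trace_{V^\mu}\bigl(\rho^\mu(e_S)\rho^\mu(a_T)\bigr)=|\Sn_S|\,|\Sn_T|\,\trace(P_S Q_T)$ and invoking the standard fact that the trace of a product of two orthogonal projectors is non-negative; your chain $\trace(PQ)=\trace(PQP)=\trace\bigl((PQ)(PQ)^*\bigr)$ is valid, using $Q^2=Q$, $(PQ)^*=QP$ and cyclicity, and the self-adjointness checks via a unitarizing inner product are exactly right. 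What your version buys: you never need $X_\mu$, its centrality, or the constant $n!/\chi^\mu_{1^k}$ --- only unitarizability and elementary projector algebra --- and it gives a transparent geometric meaning to the quantity, namely $|\Sn_S|\,|\Sn_T|$ times the overlap $\trace(P_S Q_T)$ between the $\Sn_S$-invariant subspace and the sign-isotypic subspace for $\Sn_T$ inside $V^\mu$. What the paper's group-algebra formulation buys is mainly expository: it makes visible exactly which step breaks in the $\a=2$ analogue discussed later (there the analogue $X^{(2)}_\mu$ of $X_\mu$ is not central), whereas in your formulation the corresponding obstruction is hidden in the fact that zonal spherical functions are not traces of irreducible representations. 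Both arguments ultimately rest on the same ``norm squared is non-negative'' mechanism.
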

\begin{proof}
Consider the symmetric group algebra $\setC[\Sn_k]$ and let
\[
X_\mu = \sum_{\pi \in \Sn_k} \chi^\mu(\pi^{-1}) \pi, \quad
Y_{S} = \sum_{\substack{\sigma \in \Sn_S}} \sigma \quad
 \text{ and } \quad
\Sigma_{T} = \sum_{\substack{\tau \in \Sn_T}} \epsilon(\tau)\tau.
\]
Then $B^\mu_{S,T} = [\id]X_\mu Y_S \Sigma_T$, 
where $[\id] A$ denotes the coefficient of the identity permutation in $A \in \setC[\Sn_k]$.
Note that all three elements considered above are quasi-idempotents:
indeed $Y_S^2 = |\Sn_S|\, Y_S$
and $\Sigma_T^2 = |\Sn_T|\, \Sigma_T$, 
while a classical result of representation theory --- see \emph{e.g.}, \cite[Exercise 6.4]{Serre} ---
asserts that $X_\mu^2 = \big(n! / \chi^\mu_{1^k}\big) \, X_\mu$.
\medskip

Furthermore, $X_\mu$ is a central element of $\setC[\Sn_k]$
and hence commutes with $Y_S$ and $\Sigma_T$.
Also using the identity $[\id]AB = [\id]BA$ for elements $A$ and $B$ in $\setC[\Sn_k]$, we can write:
\[
B^\mu_{S,T} = [\id]X_\mu Y_S \Sigma_T 
= \frac{\chi^\mu_{1^k}}{n!\, |\Sn_S|\,|\Sn_T|} [\id]X^2_\mu Y^2_S \Sigma^2_T 
	= \frac{\chi^\mu_{1^k}}{n!\, |\Sn_S|\,|\Sn_T|} [\id] (\Sigma_T X_\mu Y_S)  (Y_S X_\mu \Sigma_T).
    \]
We set $A^*=\sum_{g \in \Sn_k} \overline{c_g}\, g^{-1}$ if $A = \sum_{g \in \Sn_k} c_g \, g$.
Note that this operation is an anti-morphism, \emph{i.e.}, $(AB)^*=B^* A^*$.
Besides, for any element $A$ in $\setC[\Sn_k]$, one has: $[\id]AA^* = \sum_{g \in A} |c_g|^2 \geq 0$.
It is now evident that $Y_S = Y_S^*$, $\Sigma_T = \Sigma_T^*$ and $X_\mu = X_\mu^*$.
Finally
\[
B^\mu_{S,T} = \frac{\chi^\mu_{1^k}}{n!\, |\Sn_S|\,|\Sn_T|} [\id] (\Sigma_T X_\mu Y_S)  (Y^*_S X^*_\mu \Sigma^*_T) 
	= \frac{\chi^\mu_{1^k}}{n!\, |\Sn_S|\,|\Sn_T|} [\id] (\Sigma_T X_\mu Y_S)  (\Sigma_T X_\mu Y_S)^*,
    \]
which clearly is non-negative.
\end{proof}
\begin{proof}
    [Proof of \cref{thm:mainshifted}]
    \cref{LemAFF,lem:nonnegsum} imply that the polynomials
    $A_{S,T}^\mu$ have nonnegative coefficients in the falling factorial basis.
    But, from \cref{eq:ShSchur_A} (see also discussion right after this equation),
    $\ShSchur_\mu(\rvec^\pvec)$ is a nonnegative
    linear combination of such functions.
    Thus, it also has nonnegative coefficients in the falling factorial basis.

From \cref{PropKoOnShSchur}, $\Ko_\mu$ is a nonnegative linear combination
of $(\ShSchur_\nu)_{\nu \vdash |\mu|}$.
Therefore $\Ko_\mu$ also has nonnegative coefficients in the falling factorial basis.
\end{proof}

\subsection{Discussion}

As explained in the proof of \cref{thm:mainshifted} above,
the nonnegativity of $\Ko_\mu$ follows from the nonnegativity of $\ShSchur_\mu$.
Nevertheless, it is natural to try to give a direct proof,
starting from \cref{EqKoN}, along the same lines as for $\ShSchur_\mu$.
This raises the following question.

\begin{question}\label{question:bad}
For any triple $(S,T,U)$ of set-partitions of $[k]$, is it true that
\begin{equation}
    \sum_{\substack{\sigma \in \Sn_S \\ \tau \in \Sn_T \\ \sigma \tau \in \Sn_U}} \epsilon(\tau) \ge 0 ?
\end{equation}
\end{question}
Unlike \cref{lem:nonnegsum}, this does not hold:
computer exploration has given us a minimal counter-example for $k=8$ ---
see \cite{MathOverflow194852} for details.


\section{Formulas for  \texorpdfstring{$Z^\sh_\mu$}{shifted zonal} and  \texorpdfstring{$\Ko^{(2)}_\mu$}{Kostka} in the falling factorial basis}
\label{SectAlpha2}

In this section,
we give formulas for $Z^\sh_\mu$ and $\Ko^{(2)}_\mu$ in the falling factorial basis.
Although these formulas are very much similar to the case $\a=1$,
we have not been able to use them to prove the case $\a=2$ of \cref{ConjMain}.

As often when considering Jack polynomials for $\a=2$,
permutations should be replaced by pair-partitions
and characters of the symmetric groups 
 by zonal spherical functions of the pair $(S_{2k},H_k)$ ---
  $H_k$ denotes here the hyperoctahedral group, seen as a subgroup of $S_{2k}$,
  the definition is given below.


\subsection{Pair-partitions}

\begin{definition}
A pair-partition $P$ of $[2k]=\{1,\ldots,2k\}$ is a set of pairwise disjoint
two-element sets, such that their (disjoint) union is equal to
$[2k]$. 
The set of pair-partitions of $[2k]$ is denoted $\PP{k}$.
\end{definition}
The simplest example is the \emph{first} pair-partition, which plays a
par\-ti\-cu\-lar role:
\begin{equation}
    \label{eq:first-pair-partition}
    S_\star=\big\{ \{1,2\},\{3,4\},\dots,\{2k-1,2k\}\big\}.
\end{equation}

Let us consider two pair-partitions $S_1,S_2$ of the same set $[2k]$.
We denote by $\U(S_1,S_2)$ the join of $S_1$ and $S_2$ in the set-partition lattice,
\emph{i.e.}, the finest set-partition that is coarser than $S_1$ and $S_2$.
Each block of $\U(S_1,S_2)$ is a disjoint union of blocks of $S_1$,
and hence has even size.
Let $2\ell_1\geq 2\ell_2\geq\cdots$ be the ordered sizes of these blocks.
The (integer) partition $(\ell_1,\ell_2,\dots)$ of $k$
is called the \emph{type} of the pair $(S_1,S_2)$.

\begin{example}
    Consider $S_1 = \big\{ \{1,2\},\{3,4\},\{5,6\} \big\}$
    and $S_2 = \big\{ \{1,3\},\{2,4\},\{5,6\} \big\}$.
    Then $\U(S_1,S_2)= \big\{ \{1,2,3,4\},\{5,6\} \big\}$
    and $(S_1,S_2)$ has type $(2,1)$.
\end{example}

Permutations $\sigma$ in $\Sn_{2k}$ act on pair-partitions $S$ of $\PP{k}$ as follows:
\[ \sigma \cdot \big\{ \{i_1,j_1\}, \dots, \{i_k,j_k\} \big\} =
 \big\{ \{\sigma(i_1),\sigma(j_1)\}, \dots, \{\sigma(i_k),\sigma(j_k)\} \big\}.  \]
 
 Note that the type of a pair of pair-partitions is invariant by simultaneous action of the symmetric group,
 \emph{i.e.}, $(S_1,S_2)$ and $(\sigma \cdot S_1,\sigma \cdot S_2)$ have the same type.

\subsection{The Gelfand pair  \texorpdfstring{$(\Sn_{2k},H_k)$}{} and its zonal spherical functions}~\\
\label{ssec:zonal_bakground}
Representation-theoretical arguments from \cref{SectAlpha1} should be replaced
for $\a=2$ by properties of the so-called \emph{Gelfand pair} $(\Sn_{2k},H_k)$.
Here, we review the necessary material about the theory of finite Gelfand pairs
and the particular case $(\Sn_{2k},H_k)$.
All statements of this Section can be found with proofs in \cite[Sections VII,1 and VII,2]{Macdonald1995}.

Let us consider the symmetric group $\Sn_{2k}$ on $2k$ elements.
Recall that permutations in $\Sn_{2k}$ act on the set $\PP{k}$ of pair-partitions.
The set of permutations that fix $S_\star$ is a subgroup of $\Sn_{2k}$
called hyperoctahedral group and denoted by $H_k$.
It is well-known that $H_k$ is isomorphic to the group of signed permutations.
In particular, $|H_k|=2^k k!$.

Here we will be interested in the double-class algebra $\setC[H_k \backslash \Sn_{2k} / H_k]$,
that is the subalgebra of $\setC[\Sn_{2k}]$ that is invariant by left or right multiplication by
an element of $H_k$.
This algebra turns out to be commutative;
hence $(\Sn_{2k},H_k)$ is called a \emph{Gelfand pair}
and $\setC[H_k \backslash \Sn_{2k} / H_k]$ its \emph{Hecke algebra}.
One can show that two permutations $\sigma$ and $\tau$ in $S_k$
are in the same double $H_k$-coset (\emph{i.e.}, there exist $h$ and $h'$ in $H_k$
such that $\sigma=h\, \tau \, h'$) if and only if 
the pairs $\big(S_\star,\sigma \cdot S_\star\big)$ and $\big(S_\star, \tau  \cdot S_\star\big)$
 have the same type.
  The type of $\big(S_\star,\sigma \cdot S_\star\big)$ is called \defin{coset-type} of $\sigma$.
  It plays an analogous role to the cycle-type for $\a=1$.

Given a Gelfand pair, one can define \emph{zonal spherical functions},
that play a similar role to that of normalized characters in representation theory.
In the case of the Gelfand pair $(\Sn_{2k},H_k)$, the zonal spherical functions
are usually denoted by $\macw$ and indexed by partition $\mu$ of $k$.
They are functions on the symmetric group $\Sn_{2k}$ that only depend on the coset-type of the permutation.
We will denote $\macw^{\mu}_\nu$ the value $\macw^{\mu}(\pi)$ of the zonal spherical function $\macw^{\mu}$
on any permutation $\pi$ of coset-type $\nu$.
We also use the notation $\macw^{\mu}_{(S_1,S_2)}$ for $\macw^{\mu}_\nu$ if $(S_1,S_2)$ has type $\nu$.

%
%
Zonal spherical functions of $(\Sn_{2k},H_k)$ are connected to Jack polynomials
by the following analogue of Frobenius formula (\cref{Subsec:Frobenius}):
$
z_{2\nu} \theta_\nu^{(2)}(\mu) = |H_k| \macw^{\mu}_\nu
$.
Recall that $|H_k|=2^k k!$ and 
$z_{2\nu}=z_\nu 2^{\ell(\nu)}$.
We can therefore rewrite the above equality as
\begin{equation}
    z_\nu 2^{\ell(\nu)} \theta_\nu^{(2)}(\mu) = 2^k k! \macw^{\mu}_\nu.
    \label{EqSphericalJack2Bis}
\end{equation}

\subsection{A family of polynomials indexed by pair-partitions}
As in \cref{SubsectN}, we introduce an auxiliary family of polynomials,
here indexed by three pair-partitions $S_0$, $S_1$ and $S_2$. Define
\begin{equation}
 N_{(S_0,S_1,S_2)}(\rvec,\pvec) =
\sum_{\substack{v: \U(S_0,S_2) \to [d] \\ w: \U(S_0,S_1) \to [d] \\ v, w \text{ compatible}}}
\left(\prod_{ B \in \U(S_0,S_2)} p_{v(B)}\right)
\left( \prod_{ B' \in \U(S_0,S_1)} r_{w(B')}\right).
\end{equation}


\begin{remark}
    As in the case $\a=1$ --- see \cref{RkMax} ---
    $N_{(S_0,S_1,S_2)}(\rvec,\pvec)$ could be equivalently defined
    using Stanley's original version of multirectangular coordinates
    and the max-compatibility condition.
    In particular, the function $N_{(S_0,S_1,S_2)}$ defined here
    corresponds to the function $N^{(1)}_{(S_0,S_1,S_2)}$ in \cite{NousZonal}
    --- see \cite[Lemma 3.9]{NousZonal}.
\end{remark}

\subsection{Combinatorial formulas}

Given a partition $\mu \vdash k$, 
let $S_1^\mu$ and $S_2^\mu$ denote arbitrarily chosen pair-partitions of $[2k]$
so that the type of $(S_1^\mu,S_2^\mu)$ is $\mu$.
We also choose an arbitrary set-partition $U_\mu$
so that the lengths of its part, sorted in decreasing order, are given by $2\mu \coloneqq (2\mu_1,2\mu_2,\dotsc)$.

\begin{proposition}\label{combFormulasInFerayNA2}
The polynomials $\Ch^{(2)}_\mu(\rvec^\pvec)$, $\zonalZ^\sh_\mu(\rvec^\pvec)$ and $\Ko^{(2)}_\mu(\rvec^\pvec)$
are expressed in terms of $N_{(S_0,S_1,S_2)}$ as follows:
\begin{align}
    \Ch^{(2)}_\mu(\rvec^\pvec) &= \frac{(-1)^k}{2^{l(\mu)}}\sum_{S_0 \in \PP{k}} (-2)^{|\U(S_0,S^\mu_1)|} N_{(S_0,S^\mu_1,S^\mu_2)}(\rvec,\pvec) \, ;
\label{EqChN2}\\
\zonalZ^\sh_\mu(\rvec^\pvec) &= \frac{(-1)^k k!}{(2k)!} \sum_{S_0,S_1,S_2 \in \PP{k}} \macw^{\mu}_{(S_1,S_2)} (-2)^{|\U(S_0,S_1)|} N_{(S_0,S_1,S_2)}(\rvec,\pvec) \, ;
\label{EqShZonalN} \\
\Ko^{(2)}_\mu(\rvec^\pvec) &= \frac{(-1)^k}{\prod_i (2\mu_i)!}
\left[ \sum_{ \substack{ S_0,S_1,S_2 \in \PP{k} \\ S_0,S_1 \le U_\mu }}          
(-2)^{|\U(S_0,S_1)|} N_{(S_0,S_1,S_2)}(\rvec,\pvec) \right].
     \label{EqKoN2}
\end{align}
\end{proposition}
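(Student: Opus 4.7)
The strategy mirrors that of \cref{combFormulasInFerayN} for $\a=1$, with permutations of $\perms_k$ replaced by pair-partitions of $[2k]$ and irreducible characters of $\perms_k$ replaced by the zonal spherical functions $\macw^\mu$ of the Gelfand pair $(\perms_{2k},H_k)$. The first identity, \cref{EqChN2}, is essentially the main combinatorial result of \cite{NousZonal} under the notational identification $N_{(S_0,S_1,S_2)}=N^{(1)}_{(S_0,S_1,S_2)}$ noted in the remark above, so my plan is to cite it rather than reprove it.

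For \cref{EqShZonalN}, I would start from \cref{PropShJackOnCha} at $\a=2$, which expresses $\ShZonal_\mu$ as a linear combination of the $\Cha_\nu$ with coefficients $\theta_\nu^{(2)}(\mu)\,2^{-k+\length(\nu)}$. Applying \cref{EqSphericalJack2Bis} rewrites these coefficients as $(k!/z_\nu)\,\macw^\mu_\nu$; substituting \cref{EqChN2} into each $\Cha_\nu$ then produces a sum involving fixed representatives $(S_1^\nu,S_2^\nu)$. The next step exploits that both $N_{(S_0,S_1,S_2)}$ and $|\U(S_0,S_1)|$ are invariant under the diagonal action of $\perms_{2k}$ on the triple, so the inner quantity depends on $(S_1^\nu,S_2^\nu)$ only through its type $\nu$; one may then average over all pairs $(S_1,S_2)$ of type $\nu$. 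The required combinatorial input is
\[ \#\{(S_1,S_2)\in\PP{k}^2 \text{ of type }\nu\} = \frac{(2k)!}{z_\nu\, 2^{\length(\nu)}}, \]
which follows from Macdonald's formula \cite[Chapter VII, (2.4)]{Macdonald1995} for the size of the double coset $H_k\pi_\nu H_k$ combined with orbit-stabilizer. Collecting the constants gives \cref{EqShZonalN}.

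The plan for \cref{EqKoN2} runs in parallel, with \cref{PropKoaOnCha} at $\a=2$ as the starting point. After the same substitution and averaging, the weight $L_{\type(S_1,S_2),\mu}$ appears, and a pair-partition analogue of \cref{LemLPerm} interprets it as the number of labeled set-partitions $U$ of $[2k]$ whose block sizes are $2\mu$ and that are coarser than $\U(S_1,S_2)$ (equivalently $S_1,S_2\le U$, since the join is the least upper bound in the partition lattice). Swapping the order of summation to put $U$ outside, invariance again makes the inner sum independent of $U$, so one may fix $U=U_\mu$ and divide by the count $(2k)!/\prod_i(2\mu_i)!$ of such labeled partitions, producing the claimed denominator.

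The main obstacle, as is typical for zonal analogues, is the bookkeeping of powers of $2$: the factor $2^{-k+\length(\nu)}$ from \cref{PropShJackOnCha}, the $2^{\length(\nu)}$ denominator inside \cref{EqChN2}, and the $2^{\length(\nu)}$ hidden in $z_{2\nu}=z_\nu\,2^{\length(\nu)}$ must all cancel correctly. No single step is delicate, but verifying that the conventions for the ordering of the pair $(S_1,S_2)$ agree between \cref{EqChN2}, \cref{EqShZonalN} and \cref{EqKoN2} — and that the refinement condition on $U_\mu$ in \cref{EqKoN2} correctly emerges after the symmetrization from the natural condition $\U(S_1,S_2)\le U$ coming from the analogue of \cref{LemLPerm} — is where the risk of sign or factor errors is highest.
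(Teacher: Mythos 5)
Your treatment of \eqref{EqChN2} and \eqref{EqShZonalN} is exactly the paper's proof: \eqref{EqChN2} is quoted from the zonal-character paper, and \eqref{EqShZonalN} follows from \cref{PropShJackOnCha} at $\a=2$, the count $(2k)!/(z_\nu 2^{\length(\nu)})$ of pairs of pair-partitions of a given type, invariance of $N_{(S_0,S_1,S_2)}$ and of $|\U(S_0,S_1)|$ under the diagonal $\perms_{2k}$-action, and \cref{EqSphericalJack2Bis}; whether you apply the Frobenius-type identity before or after symmetrizing is immaterial, and your bookkeeping of the powers of $2$ is right.

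The substantive point is precisely the one you flagged for \eqref{EqKoN2}, and you should resolve it in your favour rather than try to make the printed condition ``emerge''. Your derivation (which parallels the paper's) produces the weight $L_{\type(S_1,S_2),\mu}$, hence via the pair-partition analogue of \cref{LemLPerm} the constraint $\U(S_1,S_2)\le\Pi(f)$, and after fixing $f_0$ the condition $S_1,S_2\le U_\mu$ --- not the printed $S_0,S_1\le U_\mu$. These two conditions are genuinely inequivalent and no further symmetrization bridges them: for $k=2$, $\mu=(1,1)$, $U_\mu=\{\{1,2\},\{3,4\}\}$, the constrained pair-partitions are forced to equal $\{\{1,2\},\{3,4\}\}$, and the right-hand side with the condition on $(S_1,S_2)$ evaluates to $E^2-E$ where $E=\sum_{a\le b}p_a r_b=|\la|$, which is the correct value $\Ko^{(2)}_{(1,1)}(\la)=n(n-1)$; the version with the condition on $(S_0,S_1)$ gives instead $E^2+2\sum_a p_a(r_a+\dotsb+r_d)^2$, already wrong for $\la=(1)$. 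So your argument, carried through with the condition on the pair whose type is $\nu$, proves the correct identity; the printed \eqref{EqKoN2}, and the sentence in the paper's proof that sets $\nu=\type(S_1,S_2)$ but then applies the analogue of \cref{LemLPerm} to blocks of $\U(S_0,S_1)$, carries an index slip. The only change this forces in your plan is to keep $\U(S_1,S_2)\le U_\mu$ (equivalently $S_1,S_2\le U_\mu$) in the final formula instead of the stated constraint.
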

\begin{proof}
The first statement is a reformulation of \cite[Thm. 1.6]{NousZonal}.

To prove the second statement, we use \cref{PropShJackOnCha} together with \eqref{EqChN2}:
\begin{align*}
\zonalZ^\sh_\mu(\rvec^\pvec)
&=
\sum_{\nu \vdash k}
2^{-(|\nu|-\length(\nu))} \theta_\nu^{(2)}(\mu)
\frac{(-1)^k}{2^{\length(\nu)}}
\sum_{S_0 \in \PP{k}} (-2)^{|\U(S_0,S_1^\nu)|} N_{(S_0,S_1^\nu,S_2^\nu)}(\rvec,\pvec) \\
&=
(-1)^k 2^{-k} \sum_{\nu \vdash k}
\theta_\nu^{(2)}(\mu)
\sum_{S_0 \in \PP{k}} (-2)^{|\U(S_0,S_1^\nu)|} N_{(S_0,S_1^\nu,S_2^\nu)}(\rvec,\pvec).
\end{align*}
From \cite[Lemma 2.4]{NousZonal}, for each partition $\nu \vdash k$,
there are exactly $(2k)!/(z_\nu \, 2^{\ell(\nu)})$ pairs $(S_1,S_2)$ of pair-partitions of $[2k]$
with type $\nu$.
Therefore,
\[
\zonalZ^\sh_\mu(\rvec^\pvec)
= (-1)^k 2^{-k} \sum_{S_0,S_1,S_2 \in \PP{k}}
\frac{z_\nu2^{\length(\nu)}}{(2k)!}
\theta_\nu^{(2)}(\mu)
(-2)^{|\U(S_0,S_1)|}N_{(S_0,S_1,S_2)}(\rvec,\pvec),
\]
where $\nu$ is the type of $(S_1,S_2)$.
Using \cref{EqSphericalJack2Bis},
the expression above further simplifies to:
\[
\zonalZ^\sh_\mu(\rvec^\pvec)
= \frac{(-1)^k k!}{(2k)!} \sum_{S_0,S_1,S_2 \in \PP{k}}
\macw^{\mu}_\nu
(-2)^{|\U(S_0,S_1)|}N_{(S_0,S_1,S_2)}(\rvec,\pvec).
\]

Let us now prove \cref{EqKoN2}.
Specializing \cref{PropKoaOnCha} for $\a=2$, we get
\begin{equation}
    \Ko^{(2)}_\mu(\rvec^\pvec) = \sum_{\nu \vdash k } \frac{ L_{\nu,\mu} }{z_\nu} \Ch^{(2)}_\nu(\lambda)  =
    \sum_{\nu \vdash k } \frac{ L_{\nu,\mu}}{z_\nu} 
\frac{(-1)^k}{2^{\length(\nu)}}
\sum_{S_0 \in \PP{k}} (-2)^{|\U(S_0,S_1^\nu)|} N_{(S_0,S_1^\nu,S_2^\nu)}(\rvec,\pvec). \\
\end{equation}
Using the same trick as above, we rewrite this as:
\begin{multline*}
    \Ko_\mu(\rvec^\pvec) = (-1)^k \sum_{S_0,S_1,S_2 \in \PP{k}} \frac{z_\nu 2^{\ell(\nu)}}{(2k)!}
    \frac{ L_{\nu,\mu}}{z_\nu 2^{\ell(\nu)}} \,(-2)^{|\U(S_0,S_1)|} N_{(S_0,S_1,S_2)}(\rvec,\pvec) \\
= \frac{(-1)^k}{(2k)!} \sum_{S_0,S_1,S_2 \in \PP{k}} L_{\nu,\mu} (-2)^{|\U(S_0,S_1)|} N_{(S_0,S_1,S_2)}(\rvec,\pvec),
\end{multline*}
where $\nu$ denotes the type of $(S_1,S_2)$.
From a straight-forward variant of \cref{LemLPerm}
$L_{\nu,\mu}$ is the number of functions 
$f:[2k] \to [\ell(\mu)]$, that are constant on blocks of $\U(S_0,S_1)$
and such that,
each number $i$ has exactly $2\mu_i$ pre-images by $f$.
Hence,
\[ \Ko^{(2)}_\mu(\rvec^\pvec) = 
\frac{(-1)^k}{(2k)!} \sum_{ \substack{f:[2k] \to [\ell(\mu)] \\ |f^{-1}(i)|=2\mu_i}}
\left[ \sum_{ \substack{ S_0,S_1,S_2 \in \PP{k} \\ \U(S_0,S_1) \le \Pi(f) }}
(-2)^{|\U(S_0,S_1)|} N_{(S_0,S_1,S_2)}(\rvec,\pvec) \right]. \]
One can easily prove that the expression in bracket does not depend on $f$.
Recall that the number of functions $f$ with $|f_0^{-1}(i)|=2\mu_i$ (for all $i\le \ell(\mu)$)
is $(2k)!/\prod_i (2\mu_i)!$.
Hence, if $f_0$ is such a function (arbitrarily chosen), we have:
\[
\Ko^{(2)}_\mu(\rvec^\pvec) =\frac{(-1)^k}{\prod_i (2\mu_i)!}
\left[ \sum_{ \substack{ S_0,S_1,S_2 \in \PP{k} \\ \U(S_0,S_1) \le \Pi(f_0) }}          
(-2)^{|\U(S_0,S_1)|} N_{(S_0,S_1,S_2)}(\rvec,\pvec) \right].
\]
But we can assume without loss of generality that the partition $\Pi(f_0)=U_\mu$.
Moreover, \hbox{$\U(S_0,S_1) \le U_\mu$} is by definition equivalent to $S_0 \le U_\mu$ and $S_1 \le U_\mu$.
This completes the proof of \cref{EqKoN2}.
\end{proof}

\subsection{Discussion}
As mentioned earlier, despite the combinatorial formulas given in \cref{combFormulasInFerayNA2},
we have not been able to prove \cref{ConjMain} for $\a=2$.
Let us explain briefly why.

The core of the proof of \cref{thm:mainshifted} is \cref{lem:nonnegsum};
what is before is mainly formal manipulations that are independent
of the specific structure for $\a=1$.
Most ingredients of the proof of \cref{lem:nonnegsum} have an analogue for $\a=2$:
\begin{itemize}
\item $X_\mu$ can be replaced by $X^{(2)}_{\mu} \coloneqq \sum_{\pi \in S_{2n}} \macw^\mu(\pi)\pi$,
which form a basis of orthogonal quasi-idempotents in $\setC[H_k \backslash \Sn_{2k} / H_k]$
\cite[Section VII,1]{Macdonald1995}.
\item The factor $(-2)^{|\U(S_0,S_1)|}$ in \cref{EqShZonalN} can be interpreted (up to a constant factor)
 as the zonal spherical function $\macw^{(1^k)}_{S_0,S_1}$ 
indexed by a one-column partition \cite[Section VII.2, Example 2.(b)]{Macdonald1995};
therefore $\varSigma_V$ has a natural analogue
$\varSigma^{(2)}_V :=   \sum_{\tau   \in S_{V}} \macw^{(1^k)}(\tau) \, \tau$.
\end{itemize}
These elements are quasi-idempotents and self-conjugate, as in the case $\a=1$.
Only the final manipulation of the proof of \cref{lem:nonnegsum}
cannot be performed for $\a=2$ since $X^{(2)}_{\mu}$ is \emph{not a central element}.

Computer exploration shows that the analogue of the quantity $B^{\mu}_{S,T}$ is sometimes negative.
Therefore a different approach has to be used to prove \cref{ConjMain} for $\a=2$.

\section{Positivity of  \texorpdfstring{$\Ko^{(\alpha)}_{(k)}$}{normalized alpha-Kostka} in the falling factorial basis}
\label{SectOnePart}

In this section, we prove combinatorially Theorem \ref{ThmOnePart}:
for each $k \ge 1$, the quantity $\Ko^{(\alpha)}_{(k)}(\rvec^\pvec)$
has nonnegative coefficients in the $\a$ falling factorial basis.
The proof is based on
a combinatorial formula for Jack polynomials,
given by Knop and Sahi in \cite{KnopSahiCombinatoricsJack}.

\subsection{The Knop--Sahi combinatorial formula}

Following Knop and Sahi, an \emph{admissible tableau} of shape $\la$ is a filling of the Young diagram $\la$
with positive integers such that
\begin{enumerate}
\item $T(i,j) \neq T(i', j-1)$ whenever $(i,j)$ is in $T$, $j>1$ and $i'<i$,
\item the same number does not appear twice in a column.
\end{enumerate}
As an example,
\[
\ytableausetup{mathmode,boxsize=1.2em}
\begin{yt}
1 & 3 & 2 & \mathbf{2} \\
4 & \mathbf{4} & 1 \\
6 & 2 \\
2 & 5
\end{yt} 
\]
is an admissible tableau (rows are numbered from top to bottom). 
A box $(i,j)$ in an admissible tableau $T$ is
\emph{critical} if $j>1$ and $T(i,j) = T(i,j-1)$. 
The two boxes with bold entries are critical in the example above.
The \defin{weight} of an admissible tableau is defined as
\[
d_T(\alpha) = \prod_{\square \in T \text{ critical}} 
\big[ \alpha(a_\lambda(\square) +1) + (l_\lambda(\square) +1) \big],
\]
where $a_\lambda(\square)$ and $l_\lambda(\square)$ are
the arm- and leg-lengths, as defined in \cref{SubsecPartitions}.

Knop and Sahi give a combinatorial formula for the Jack symmetric functions
as a weighted sum over admissible tableaux:
\begin{theorem}[F. Knop, S. Sahi, \cite{KnopSahiCombinatoricsJack}]
    For any Young diagram $\la$,
 \begin{equation}
  \jackJ^{(\alpha)}_\la = \sum_{T} d_T(\alpha) \prod_{s \in T} x_{T(s)}
 \end{equation}
where the (infinite) sum is taken over all admissible tableaux of shape $\la$.
\end{theorem}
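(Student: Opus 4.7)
The plan is to verify that the right-hand side, denoted $\widetilde J_\la := \sum_T d_T(\a) \prod_s x_{T(s)}$, satisfies the three characterizing properties of $\jackJ_\la^{(\a)}$: symmetry in the variables $x_1, x_2, \ldots$, triangular expansion in the monomial basis with the $J$-normalization coefficient $\prod_{s \in \la}(\a\, a_\la(s) + l_\la(s) + 1)$ on $x^\la$, and being a simultaneous eigenfunction of the Sekiguchi (equivalently Laplace--Beltrami) operator with the standard Jack eigenvalue. These three properties together characterize $\jackJ_\la^{(\a)}$ uniquely, by \cite[Section VI.10]{Macdonald1995}.

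For symmetry, I would construct, for each adjacent transposition $s_i$, an involution $\iota_i$ on the set of admissible tableaux of shape $\la$ that realizes $s_i$ on monomials. The involution swaps occurrences of $i$ and $i+1$ column by column, using condition (1) of admissibility to control which labels can move and condition (2) to ensure a well-defined redistribution; critical boxes must be matched by the involution so that the weight $d_T(\a)$, which depends only on arm- and leg-lengths of critical positions, is preserved. For the leading term, the unique admissible tableau producing the monomial $x^\la$ is the one with row $j$ filled entirely with $j$'s, and a direct inspection shows that every box outside the first column is critical, contributing precisely the $J$-normalization factor. Triangularity in dominance order follows from condition (2), which restricts the multiset of labels appearing in each column.

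The deepest step is the eigenfunction property. The most transparent route, following Knop--Sahi, proceeds through nonsymmetric Jack polynomials $E_\eta$ indexed by compositions $\eta$, built recursively from $E_{(0,\dots,0)} = 1$ via Cherednik's intertwining operators. Specifically, an affine cyclic shift $\Phi$ sends $E_\eta$ to $E_{(\eta_n+1,\eta_1,\dots,\eta_{n-1})}$, and a simple transposition intertwiner (applicable when $\eta_i > \eta_{i+1}$) sends $E_\eta$ to $E_{s_i\eta}$ plus an explicit multiple of $E_\eta$ whose coefficient is exactly of the form $\a(a+1) + (l+1)$ for appropriate arm and leg values. Iterating these moves to reach any composition $\eta$ rearranging $\la$, and expanding the recursion, produces a combinatorial sum for $E_\eta$; symmetrizing (with the appropriate scalar) yields $\jackJ_\la^{(\a)}$, and one identifies the resulting terms with admissible tableaux, the transposition steps with critical boxes, and the intertwiner coefficients with the factors in $d_T(\a)$.

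The main obstacle is the combinatorial identification in the final step: one must show bijectively that the sum over symmetrized sequences of intertwiner moves is exactly indexed by admissible tableaux of shape $\la$, that the two admissibility conditions emerge as precisely the constraints surviving symmetrization, and that the product of intertwiner coefficients along any surviving history accumulates to $d_T(\a)$. This bookkeeping — where the row condition (1) and the column condition (2) must match up with the arm/leg structure of the recursion in just the right way — is the technical heart of the Knop--Sahi argument, and is substantially more delicate than the analogous combinatorial formula for Schur polynomials recovered at $\a = 1$.
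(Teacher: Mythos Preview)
The paper does not prove this theorem: it is quoted as a result of Knop and Sahi \cite{KnopSahiCombinatoricsJack} and used as a black box. There is therefore no proof in the paper to compare your proposal against.

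Your outline via nonsymmetric Jack polynomials and Cherednik intertwiners is indeed the route taken in the original Knop--Sahi paper, and your description of that part is a fair high-level summary. One point worth flagging: the first part of your plan --- proving symmetry of $\widetilde J_\la$ directly by a weight-preserving involution $\iota_i$ on admissible tableaux --- is not part of the Knop--Sahi argument and is, in fact, unnecessary once you go through the nonsymmetric construction, since symmetrizing the $E_\eta$ automatically yields a symmetric function. More importantly, such an involution is not at all easy to write down: condition (1) in the definition of admissibility compares entries in \emph{adjacent} columns in a nonlocal way, and there is no obvious swap of $i$'s and $i+1$'s that simultaneously preserves admissibility and the set of critical boxes (hence $d_T(\a)$). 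So if you intend this as a standalone step, it is a genuine gap; if you are only using it as heuristic motivation before the nonsymmetric argument, you should drop it, since the latter already delivers symmetry.
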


By definition of $\Ko^{(\alpha)}_{(k)}$,
for $|\la|=n \ge k$, one has
\begin{equation}\label{eq:jackkostka}
      \Ko_{(k)}^{(\alpha)}(\lambda) = \frac{1}{(n-k)!} [m_{(k, 1^{n-k})}] J_{\lambda}^{(\alpha)}
\end{equation}
where $[m_{(k, 1^{n-k})}]F$ denotes the coefficient of $m_{(k, 1^{n-k})}$ in the symmetric function $F$.
Equivalently, this is the coefficient of any monomial whose exponent is a permutation
of $(k,1^{n-k})$, for instance $x_1^k x_2 \dots x_{n-k+1}$.
Therefore one has
\begin{equation}\label{eq:Kok_FromKS}
    \Ko_{(k)}^{(\alpha)}(\lambda) = \frac{1}{(n-k)!} \sum_{T} d_T(\alpha)
\end{equation}                     
where the sum is taken over all admissible tableaux of shape $\lambda$
and of type $(k,1^{n-k})$, that is
having $k$ entries equal to $1$, and one entry equal to each $i$ between $2$
and $n-k$.
\medskip

We now give an easy equivalent formulation of \cref{eq:Kok_FromKS},
more suitable for our purpose.

\begin{definition}
    \label{def:KSHT}
    A \emph{Knop--Sahi hook tableau} of shape $\lambda$ is the following data:
\begin{enumerate}
    \item \label{item:forbidden}
    a set of $k$ boxes, called \emph{marked boxes}, in different columns of $\lambda$, such that
no marked box is down-right to another marked box in two adjacent columns;
\item \label{item:critical}
    the right box in a pair of two adjacent marked boxes in a row is called \emph{critical};
\item \label{item:arrows} every critical box has 
    \begin{itemize}
        \item either a right arrow pointing at a box 
            which is \textbf{weakly to its right},
        \item or a down arrow pointing at a box
            which is \textbf{strictly below it},
        \item or no arrow at all.
    \end{itemize}
\item \label{item:weight} each critical box with a right-pointing arrow has weight $\alpha$, and all other boxes has weight $1$.
\end{enumerate}
The set of Knop--Sahi hook tableaux of shape $\la$ is denoted $\tabHook(\lambda,k)$.
The weight $w$ of a tableau in $\tabHook(\lambda,k)$
is the product of the weights of the boxes in the tableau.
\end{definition}

\begin{example}
A Knop-Sahi hook tableau of shape $\la=(9,9,7,5)$ with $k=8$ marked boxes is represented below.
Marked boxes are represented with the symbol $\ast$.
Five of these marked boxes are critical.
Instead of using arrows, a superscript number $j$ indicates an arrow pointing at
the box $j$ steps to the right, while a subscript number indicates a down-pointing arrow,
pointing at the box $j$ steps below.
As there are three right-pointing arrows (three superscript numbers),
the weight of this particular tableau is $\a^3$.
\[
\ytableausetup{mathmode,boxsize=1.2em}
\begin{yt}
\;\ast\;  &       &       &       &   & \;\ast\; & \;\ast^2\; & \;\ast^1\; & \;\ast_1\; \\
        &\;\ast\; &\;\ast^4\; & \;\ast_2\;&   &        &        &        &        \\
        &       &       &       &   &        &     \\
        &       &       &       &   \\
\end{yt}
\]
\label{Ex:KSHT}
\end{example}

\begin{proposition}
    \label{Prop:Kok_FromKS}
For any integer $k \ge 1$ and Young diagram $\la$, one has:
    \[ \Ko_{(k)}^{(\alpha)}(\lambda) = \sum_{T \in \tabHook(\lambda,k) } w(T). \]
\end{proposition}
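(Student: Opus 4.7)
The plan is to unpack \eqref{eq:Kok_FromKS} by grouping admissible tableaux $T$ of shape $\lambda$ and type $(k, 1^{n-k})$ according to the set $M(T)$ of boxes containing the entry $1$. The key observation is that once $M(T)$ is fixed, the weight $d_T(\alpha)$ no longer depends on the actual labels placed in the remaining $n-k$ boxes, so the factor $1/(n-k)!$ in \eqref{eq:Kok_FromKS} absorbs all possible fillings of these boxes, leaving a sum over the positions of the $1$'s and a local weight at each critical box.

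First, I would verify that the positions of the $1$'s in an admissible $T$ form a valid marking in the sense of \cref{def:KSHT} item \eqref{item:forbidden}. The no-repeat-in-a-column condition (condition (2) of admissibility) forces the $1$'s to sit in distinct columns; and the condition $T(i,j) \neq T(i', j-1)$ for $i' < i$ (condition (1)), applied to a pair of $1$'s in adjacent columns, forbids the $1$ in column $j$ from sitting strictly below the $1$ in column $j-1$. Second, I would check that, conversely, given any such marking $M$, the $n-k$ unmarked boxes can be filled in an arbitrary bijective way with $\{2, \ldots, n-k+1\}$: since each of these labels appears exactly once, admissibility conditions (1) and (2) involving them are automatic (no two non-$1$ entries can be equal, and no non-$1$ entry can coincide with a $1$). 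Hence there are exactly $(n-k)!$ admissible tableaux $T$ with $M(T) = M$, and the set of Knop--Sahi critical boxes of each such $T$ is precisely the set of right boxes in adjacent pairs of marked boxes in a row, matching item \eqref{item:critical}.

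Third, I would identify the combinatorial meaning of the factor $\alpha(a_\lambda(\square)+1) + (l_\lambda(\square)+1)$ attached to each critical box $\square$ in $d_T(\alpha)$: this is exactly the total $w$-weight of the choice of arrow in \cref{def:KSHT} item \eqref{item:arrows}, since there are $a_\lambda(\square)+1$ right-arrow targets (each contributing weight $\alpha$), $l_\lambda(\square)$ down-arrow targets (each contributing weight $1$), and one ``no arrow'' option (weight $1$). Putting everything together yields
\[
\Ko_{(k)}^{(\alpha)}(\lambda) = \frac{1}{(n-k)!} \sum_{M} (n-k)! \prod_{\square \text{ crit.\ in } M} \bigl[\alpha(a_\lambda(\square)+1) + l_\lambda(\square)+1\bigr] = \sum_{T \in \tabHook(\lambda, k)} w(T),
\]
where $M$ runs over valid markings. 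I do not expect any serious obstacle; the argument is essentially a careful bookkeeping exercise exploiting the fact that the Knop--Sahi weight depends only on adjacent pairs of $1$'s in a row, not on the particular ordering of the remaining distinct labels.
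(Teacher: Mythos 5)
Your proposal is correct and follows essentially the same route as the paper: group the admissible tableaux of type $(k,1^{n-k})$ by the positions of their $1$'s, observe that each valid marking admits exactly $(n-k)!$ completions of identical weight, and interpret the Knop--Sahi factor $\alpha(a_\lambda(\square)+1)+l_\lambda(\square)+1$ as the total weight of the arrow choices in \cref{def:KSHT}. The only (trivial) omission is the case $|\lambda|<k$, where both sides vanish --- the left side by definition of $\Ko^{(\alpha)}_{(k)}$ and the right side because $\lambda$ has fewer than $k$ columns, so no column-distinct marking of size $k$ exists.
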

\begin{proof}
    If $n<k$ both sides are trivially equal to $0$, so let us focus on $n \ge k$.

    We start from \cref{eq:Kok_FromKS}.
Consider an admissible tableau of shape $\la$ and type $(k,1^{n-k})$.
The $k$ boxes filled with $1$ satisfy Condition \ref{item:forbidden} of
\cref{def:KSHT}.

Moreover,
the conditions defining admissible tableaux only involve pair of boxes with the same value.
Hence, we can permute entries $2$, \dots, $n-k+1$ in an admissible tableau of type $(k,1^{n-k})$
and get another admissible tableau of type $(k,1^{n-k})$.
As the weight is also defined using only pairs of boxes with the same value,
this operation does not change the weight.

Finally, every placement of $k$ entries equal to $1$ in $\la$
respecting Condition \ref{item:forbidden}
of \cref{def:KSHT} can be completed in $(n-k)!$ ways to an
admissible tableaux of type $(k,1^{n-k})$,
each with the same weight.

Therefore, \cref{eq:Kok_FromKS} can be rewritten as follows:
\[ \Ko_{(k)}^{(\alpha)}(\lambda) = \sum_{T} d_T(\alpha), \]
where the sum runs over sets $T$ of $k$ boxes as in \cref{item:forbidden} 
of \cref{def:KSHT} and 
\[
d_T(\alpha) = \prod_{\square \in T \text{ critical}} 
\big[ \alpha(a_\lambda(\square) +1) + (l_\lambda(\square) +1) \big].
\]
\emph{Critical} is here defined as in \cref{item:critical} in \cref{def:KSHT}.
\medskip 

Now, \cref{item:arrows,item:weight} in \cref{def:KSHT}
ensure that the total weighted counts agree:
a critical box $\square$ has $a_\lambda(\square) +1$ possible 
right-pointing arrows each with weight $\a$ 
and $l_\lambda(\square)$ possible down-pointing arrows each with weight $1$.
To this should be added the possibility of having no arrow, in which
case the box also has weight $1$.
\end{proof}

We cannot prove \cref{ThmOnePart} directly 
from \cref{Prop:Kok_FromKS}.
In the next subsection, we shall give another equivalent 
combinatorial description of $\Ko_{(k)}^{(\alpha)}(\lambda)$.

\subsection{A new combinatorial formula for hook monomial coefficients}

We now give a weight-preserving bijection from $\tabHook(\lambda,k)$
to a second family of tableaux,
called \emph{permuted tableaux}.
The latter is easier to analyze with respect to multi-rectangular coordinates,
see \cref{SubsectKkFFPos}.

\subsubsection{Left-to-right minima}
Recall that $\Sn_j$ denotes the set of permutations of size $j$.
We shall here view permutations as words.
By definition,
the $i$-th entry $\pi_i$ of a permutation $\pi$ is
a \emph{left-to-right minimum} if, for any $i'<i$, one has $\pi_{i'}>\pi_i$.
In other terms, when we reach $\pi_i$, 
reading the permutation from left to right,
the value $\pi_i$ is the minimum
of the already read entries.
As an example,
$4\, 2\, 5\, 1\, 3$ has three left-to-right minima $\pi_1=4$,
$\pi_2=2$ and $\pi_4=1$.

The number of left-to-right minima of a permutation $\pi$ will be denoted $\lrmin(\pi)$.
All permutations have at least one left-to-right minimum, its first letter $\pi_1$.

Observe that each permutation $\pi$ of $j+1$ can be obtained uniquely from a permutation $\hat{\pi}$
of $j$ by inserting the letter $j+1$ in some position in the word of $\hat{\pi}$.
The number of left-to-right minima is easily tracked through this process:
\begin{itemize}
    \item if the \emph{new maximum} $j+1$ is inserted \emph{at the beginning}
        of the permutation,
        then $\lrmin(\pi) = 1 + \lrmin(\hat{\pi})$.
    \item otherwise, $\lrmin(\pi) = \lrmin(\hat{\pi})$. We will refer to this case
        by saying that the \emph{new maximum} $j+1$
        is inserted \emph{inside} the permutation $\hat{\pi}$.
\end{itemize}
An immediate consequence, obtained by induction,
is the following formula for the generating polynomial of $\lrmin$:
\begin{equation}
    \sum_{\pi \in \Sn_j} t^{\lrmin(\pi)} = t\, (t+1)\, \cdots\, (t+j-1),
    \label{eq:gen_pol_lrmin}
\end{equation}
or, equivalently,
\begin{equation}
    \sum_{\pi \in \Sn_j} t^{j-\lrmin(\pi)} = 
    (1+t)\, \cdots\, \big(1+(j-1)\, t\big).
    \label{eq:gen_pol_jlrmin}
\end{equation}

\subsubsection{Our new formula: first version}
\begin{definition}
Let $\tabPerm(\lambda,k)$ be the set of tableaux of shape $\lambda$ and
$k$ marked boxes, labeled with positive integers, with the following properties:
\begin{enumerate}
\item No two marked boxes appear the same column;
\item if a row $r$ contains $j$ marked boxes,
    then reading the labels from left to right give a permutation $\pi_r$ in $\perms_j$;
\item the weight of a row $r$ with label permutation $\pi_r$ in $\perms_j$
    is given by $\alpha^{j-\lrmin(\pi_r)}$,
where $\lrmin$ denotes the number of left-to-right minima.
\end{enumerate}
The weight $w(T)$ of a tableau $T$ in $\tabPerm(\lambda,k)$
is given by the product of the weights of the rows.
\end{definition}
\begin{example}
    The following is a permuted tableau of weight $\a^3$:
    indeed, the permutation in the first row, $2\, 1$, 
    and the permutation in the second row, $4\, 1\, 5\, 2\, 3$,
    have both two left-to-right minima,
    while the trivial permutation in the last row, $1$, has
    one left-to-right minimum.
\[ \ytableausetup{mathmode,boxsize=1.2em}
\begin{yt}
2  &       &       &       &   &  1 &   &        &   \\
        &4 & 1 &       &   &        &   5     &   2     &  3      \\
        &       &       &       &   &        &     \\
        &       &       &   1    &   \\
\end{yt}\]
\label{Ex:PermTab}
\end{example}

The purpose of the subsection is to prove the following new combinatorial formula
for $\Ko_{(k)}^{(\alpha)}(\lambda)$.
\begin{proposition}
    \label{Prop:NewCombFormula1}
For any integer $k \ge 1$ and Young diagram $\la$, one has:
\[
 \Ko_{(k)}^{(\alpha)}(\lambda) =  \sum_{T \in \tabPerm(\lambda,k) } w(T).
\]
\end{proposition}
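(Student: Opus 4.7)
The plan is to derive this new combinatorial formula from \cref{Prop:Kok_FromKS} by exhibiting a weight-preserving bijection $\Phi \colon \tabHook(\lambda,k) \to \tabPerm(\lambda,k)$. The guiding identity is
\[
\sum_{\pi \in \Sn_j} \a^{j - \lrmin(\pi)} = (1+\a)(1+2\a) \cdots (1+(j-1)\a),
\]
which admits an insertion interpretation: build $\pi$ by successively inserting $1, 2, \dots, j$, where placing the $m$-th value at position $1$ yields a new left-to-right minimum (weight $1$) and placing it at any of the $m-1$ other positions yields a non-LR-minimum (weight $\a$). Meanwhile, on the Knop--Sahi side, summing the weights of arrow configurations at a critical box $\square = (r,c)$ with arm $a$ and leg $l$ gives $\a(a+1) + (l+1)$, decomposed as $a+1$ right-arrow choices of weight $\a$ and $l+1$ unit-weight choices (the $l$ down-arrows together with the no-arrow option). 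This parallelism strongly suggests that the arrow data at a critical box should be translated into one step of an insertion sequence for the permutation of the row where the corresponding mark ends up.

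My plan is therefore to define $\Phi$ by processing the critical boxes of $T \in \tabHook(\lambda,k)$ in right-to-left column order. At each critical box, a down arrow to $(r',c)$ moves the mark along its column to the lower row $r'$; a right arrow keeps the mark in place and contributes a non-LR-minimum insertion (weight $\a$) to the permutation of the row in which the mark ends up, with the target column encoding the insertion position among the $a+1$ possibilities; and the no-arrow choice keeps the mark in place and contributes an LR-minimum insertion (weight $1$). The weight of $\Phi(T)$ would then equal that of $T$ box by box by construction, reducing the proof to checking that $\Phi$ is a well-defined bijection.

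The main obstacle is that down arrows may move a mark to another row and thereby alter the combinatorics of that row's permutation; as one sees already for $\lambda = (3,3)$ with $k=3$, the bijection is \emph{not} mark-preserving and cannot be analyzed row-by-row in isolation. Verifying well-definedness and invertibility therefore requires careful accounting: one must choose the processing order so that, when a critical box is treated, the destination row of the mark has already been fixed by arrows processed earlier, and one must check that the permutation statistics and mark positions of $\Phi(T)$ uniquely recover the original arrow data. If this direct bijective construction proves too delicate, a fallback plan is to prove the identity by induction on $k$ via a rightmost-marked-column deletion operation, verifying that both sums satisfy the same recursion with the base case $k=0$ being trivial.
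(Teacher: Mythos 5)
Your overall strategy is the one the paper follows: reduce to \cref{Prop:Kok_FromKS} and build a weight-preserving bijection between $\tabHook(\la,k)$ and $\tabPerm(\la,k)$, matching the local choice ``right arrow / down arrow / no arrow'' at a critical box with the insertion of a new maximum into a row permutation, via the identity $\sum_{\pi\in\Sn_j}\a^{j-\lrmin(\pi)}=(1+\a)\cdots(1+(j-1)\a)$. However, as written the proposal has a genuine gap, and one of its concrete rules is wrong. You stipulate that a right arrow ``keeps the mark in place'' and only encodes an insertion position in the label data. This already fails for $\la=(3)$, $k=2$: the hook tableaux with marks in columns $1,2$ carry total weight $1+2\a$ (the critical box has arm $1$, so two right-arrow choices), while the permuted tableaux with that same mark set carry only $1+\a$; the missing $\a$ must be transferred to the mark set $\{1,3\}$, whose hook-side weight is $1$ but whose permuted-side weight is $1+\a$. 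So right arrows, not just down arrows, must relocate the mark (to the pointed column), and therefore the arm count $a_\la(\square)+1$ is matched by the number of possible landing columns, not by insertion slots inside a fixed row permutation. This is precisely what the paper's map does, and it is also why the delicate cases arise: when the pointed column already contains a mark (weakly above or strictly below), one must shift the existing marks in that row to preserve column-distinctness, and the label bookkeeping must be arranged so that down-arrow and no-arrow steps always insert the new maximum as a left-to-right minimum while right-arrow steps never do.

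Those delicate cases, together with the construction and verification of the inverse map (which must recover the arrow targets from the final mark positions and labels), constitute essentially all of the work in the paper's proof; your proposal flags them as obstacles but does not resolve them, so the argument is not complete. The fallback you mention (induction on $k$ by deleting the rightmost marked column) is also not developed: the Knop--Sahi weight of a configuration depends on arms and legs in the fixed shape $\la$ and on arrows that may point into or out of the deleted column, and it is not clear what common recursion both sides would satisfy. To turn the proposal into a proof you would need to replace the ``mark stays in place'' rule by a relocation rule for right arrows, specify the shift operations when the target column is occupied, fix a processing order and a labelling convention (e.g.\ always assign the new maximal label in the destination row), and then verify weight preservation and bijectivity case by case, which is exactly the content of the paper's maps $\Psi$ and $\Phi$.
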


\begin{proof}[Structure of the proof]
We will construct a weight-preserving bijection 
$\Psi: \tabHook(\la,k) \to \tabPerm(\la,k)$;
see \cref{lem:PsiPhiInverse,lem:Psi_WellDefined} below.
The proposition then follows from \cref{Prop:Kok_FromKS}.
\end{proof}

\subsubsection{The map $\Psi$}

We start with a Knop--Sahi hook tableau $T$.
To define its image by $\Psi$,
first replace the right-most marked box by a $1$.
Then we shall process marked boxes from right to left.
The box being processed is called the \emph{active box}.
At each stage, but the last, 
the next active 
box gets a label, which is a positive integer.
Therefore, already processed boxes, 
as well as the active box being processed, have a label.

We shall often assign to a box \emph{the new maximal label in its row}.
This simply means that we give it label $m = 1 + \max(b_1,\dots,b_l)$,
where $b_1,\dots,b_l$ are the labels of already labeled boxes in the considered row.
If the considered row has no labeled boxes yet, take by convention $m=1$.

We now explain how to process boxes in a case-by-case fashion:
\begin{description}
    \item[Case (N)] \textbf{The active box is not critical or has no arrows.}
Do nothing with the active box.
If the active box is not the left-most marked box of the tableau,
\emph{i.e.}, if this is not the last step,
assign to the next active box the maximal label in its row.

This is represented in the Figure below.
The dots indicate that there may be some columns without marked boxes
between the current active box and the next one.
Note also that the next active box can be above, below, or in the same row
as the current active box --- this is not relevant here.
\medskip
\begin{equation}\label{eq:ksbij-N}
    \begin{array}{c} \begin{tikzpicture}[->,thick,baseline=-0.25ex,
    block/.style={rectangle, draw=black,minimum size=6mm},
every loop/.style={},node distance=6mm]
\node[block] (X) {$\ast$};
\node [right=18mm of X] (B) { };
\node[block] [below of=B] (A) {$a$};
\node [left=5mm of A] {$\cdots$};
\end{tikzpicture}\end{array}
\quad
\Longrightarrow
\quad
  \begin{array}{c}
\begin{tikzpicture}[->,thick,baseline=-0.25ex,
    block/.style={rectangle, draw=black,minimum size=6mm},
every loop/.style={},node distance=6mm]
\node[block] (X) {$m$};
\node [right=18mm of X] (B) { };
\node[block] [below of=B] (A) {$a$};
\node [left=5mm of A] {$\cdots$};
\end{tikzpicture} \end{array}\tag{N}
\end{equation}
\item[Case (D)] \textbf{The active box is critical and has a down-pointing arrow.}
    First give the label $a$ of the active box to the next active box
    (which is immediately on its left, as the active box is critical).
    Then unmark the active box and mark instead the pointed box;
    assign to it the maximal label in its row.
\begin{equation}\label{eq:ksbij-down}
\begin{tikzpicture}[->,thick,baseline=-7.5ex,
    block/.style={rectangle, draw=black,minimum size=6mm},
every loop/.style={},node distance=6mm]
\node[block] (X) {$\ast$};
\node[block] [right of=X] (A) {$a$};
\node[block] [below=12mm of A] (B) {$ $};
\path[every node/.style={font=\sffamily\small}]
(A) edge[bend left=45] 
(B);
\end{tikzpicture}
\quad
\Longrightarrow
\quad
\begin{tikzpicture}[->,thick,baseline=-7.5ex,
    block/.style={rectangle, draw=black,minimum size=6mm},
every loop/.style={},node distance=6mm]
\node[block] (X) {$a$};
\node[block] [right of=X] (A) {$ $};
\node[block] [below=12mm of A] (B) {$m$};
\end{tikzpicture}\tag{D}
\end{equation}

\item[Case (R)] \textbf{The active box is critical and has a right-pointing arrow.}

    \emph{Subcase (Re), the column containing the pointed box is empty.}
    Proceed exactly as in case $(D)$.
\begin{equation}\label{eq:ksbij-right}
\begin{tikzpicture}[->,thick,baseline=-0.5ex,
    block/.style={rectangle, draw=black,minimum size=6mm},
every loop/.style={},node distance=6mm]
\node[block] (X) {$\ast$};
\node[block] [right of=X] (A) {$a$};
 \node[block] [right=24mm of A] (B) {$ $};
\path[every node/.style={font=\sffamily\small}]
(A) edge[bend left=45] (B);
\end{tikzpicture}
\quad
\Longrightarrow
\quad
\begin{tikzpicture}[->,thick,baseline=-0.5ex,
    block/.style={rectangle, draw=black,minimum size=6mm},
every loop/.style={},node distance=6mm]
\node[block] (X) {$a$};
\node[block] [right of=X] (A) {$ $};
\node[block] [right=24mm of A] (B) {$m$};
\end{tikzpicture}\tag{Re}
\end{equation}

\noindent
\emph{Subcase (Ra), there is already a marked box $b_1$ \textbf{weakly above} the pointed box
in the same column.}
Do as in case $(Re)$.
In addition, we perform a shift of marked boxes in the same row as $b$ as displayed below.
This operation ensures that we never have two marked boxes in the same column.
\begin{equation}\label{eq:ksbij-rightA}
\begin{tikzpicture}[->,thick,baseline=6.5ex,
    block/.style={rectangle, draw=black,minimum size=6mm},
every loop/.style={},node distance=6mm]
\node[block] (X) {$\ast$};
\node[block] [right of=X] (A) {$a$};
\node[block] [above=12mm of A] (DD) {$ $};
\node[block] [right=36mm of A] (G) {$ $};
\node[block] [right=6mm of DD] (B3) {$b_l$};
\node [right=15mm of DD] (BB) {$\cdots$};
\node[block] [right=24mm of DD] (B2) {$b_2$};
\node[block] [right=36mm of DD] (B1) {$b_1$};
\path[every node/.style={font=\sffamily\small}]
(A) edge[bend right=-20] (G);
\end{tikzpicture}
\Longrightarrow
\begin{tikzpicture}[->,thick,baseline=6.5ex,
    block/.style={rectangle, draw=black,minimum size=6mm},
every loop/.style={},node distance=6mm]
\node[block] (X) {$a$};
\node[block] [right of=X] (A) {$ $};
\node[block] [above=12mm of A] (DD) {$b_l$};
\node[block] [right=36mm of A] (G) {$m$};
\node[block] [right=6mm of DD] (B3) {$b_{l-1}$};
\node [right=15mm of DD] (BB) {$\cdots$};
\node[block] [right=24mm of DD] (B2) {$b_1$};
\node[block] [right=36mm of DD] (B1) {$ $};
\end{tikzpicture}\tag{Ra}
\end{equation}

\noindent
\emph{Subcase (Rb), there is already a marked box $b_1$ \textbf{strictly below} the pointed box
in the same column.}
We do as in Subcase $(Ra)$ above, except that the new marked box is not the 
pointed box, but the former box $b_1$; see Figure below.
\begin{equation}\label{eq:ksbij-rightB}
\begin{tikzpicture}[->,thick,baseline=-7.5ex,
    block/.style={rectangle, draw=black,minimum size=6mm},
every loop/.style={},node distance=6mm]
\node[block] (X) {$\ast$};
\node[block] [right of=X] (A) {$a$};
\node[block] [below=12mm of A] (DD) {$ $};
\node[block] [right=36mm of A] (G) {$ $};
\node[block] [right=6mm of DD] (B3) {$b_l$};
\node [right=15mm of DD] (BB) {$\cdots$};
\node[block] [right=24mm of DD] (B2) {$b_2$};
\node[block] [right=36mm of DD] (B1) {$b_1$};
\path[every node/.style={font=\sffamily\small}]
(A) edge[bend left=20] (G);
\end{tikzpicture}
\Longrightarrow
\begin{tikzpicture}[->,thick,baseline=-7.5ex,
    block/.style={rectangle, draw=black,minimum size=6mm},
every loop/.style={},node distance=6mm]
\node[block] (X) {$a$};
\node[block] [right of=X] (A) {$ $};
\node[block] [below=12mm of A] (DD) {$b_l$};
\node[block] [right=36mm of A] (G) {$ $};
\node[block] [right=6mm of DD] (B3) {$b_{l-1}$};
\node [right=15mm of DD] (BB) {$\cdots$};
\node[block] [right=24mm of DD] (B2) {$b_1$};
\node[block] [right=36mm of DD] (B1) {$m$};
\end{tikzpicture}\tag{Rb}
\end{equation}

%
\emph{
The special case where the active box has a right arrow pointing to itself
is considered as a degenerate case of $(Ra)$.
}
\[
\begin{tikzpicture}[->,thick,baseline=-0.5ex,
    block/.style={rectangle, draw=black,minimum size=6mm},
every loop/.style={},node distance=6mm]
\node[block] (X) {$\ast$};
\node[block] [right of=X] (A) {$a$};
\path[every node/.style={font=\sffamily\small}]
(A)  edge[loop right,looseness=12] node  { } (A);
\end{tikzpicture}
\quad
\Longrightarrow
\quad
\begin{tikzpicture}[->,thick,baseline=-0.5ex,
    block/.style={rectangle, draw=black,minimum size=6mm},
every loop/.style={},node distance=6mm]
\node[block] (X) {$a$};
\node[block] [right of=X] (A) {$m$};
\end{tikzpicture}
\]
\end{description}

When all marked boxes have been processed,
we get a tableau $T'$ with labeled marked boxes.
We set $\Psi(T)=T'$.

\begin{example}
This example illustrates the map described above.
We start from the Knop-Sahi hook tableau $T$ from \cref{Ex:KSHT}.
The first step is to assign label 1 to the right-most box and we get:
\[
\ytableausetup{mathmode,boxsize=1.2em}
\begin{yt}
\;\ast\;  &       &       &       &   & \;\ast\; & \;\ast^2\; & \;\ast^1\; & \;1_1\; \\
        &\;\ast\; &\;\ast^4\; & \;\ast_2\;&   &        &        &        &        \\
        &       &       &       &   &        &     \\
        &       &       &       &   \\
\end{yt}
\]
Since this box, which is the first active box,
has a down pointing arrow, we apply rule \eqref{eq:ksbij-down}.
The label $1$ is transfered to the box immediately at his left,
and instead of the active box, we mark the pointed box, 
in the same column, one row below. 
This box gets  the new maximal label in its row, $1$:
\[
\ytableausetup{mathmode,boxsize=1.2em}
\begin{yt}
\;\ast\;  &       &       &       &   & \;\ast\; & \;\ast^2\; & {1}^1 &   \\
        &\;\ast\; &\;\ast^4\; & \;\ast_2\;&   &        &        &        &  1      \\
        &       &       &       &   &        &     \\
        &       &       &       &   \\
\end{yt}
\]
In the next step, the new active box has a right-pointing arrow, one step to the right.
The column the arrow points already contains a marked box below the pointed box,
so we apply rule \eqref{eq:ksbij-rightB}.
In the step after that, we again apply rule \eqref{eq:ksbij-rightB}.
These two steps are represented below.
\[
\ytableausetup{mathmode,boxsize=1.2em}
\begin{yt}
\;\ast\;  &       &       &       &   & \;\ast\; & \;{1}^2 &        &   \\
        &\;\ast\; &\;\ast^4\; & \;\ast_2\;&   &        &        &   1     &  2      \\
        &       &       &       &   &        &     \\
        &       &       &       &   \\
\end{yt}
\quad \to \quad
\ytableausetup{mathmode,boxsize=1.2em}
\begin{yt}
\;\ast\;  &       &       &       &   &  1 &   &        &   \\
        &\;\ast\; &\;\ast^4\; & \;\ast_2\;&   &        &   1     &   2     &  3      \\
        &       &       &       &   &        &     \\
        &       &       &       &   \\
\end{yt}
\]
The active box is now the left-most $1$, which is not a critical box.
We apply rule \eqref{eq:ksbij-N}
and assign to the next active box the new maximal value
in its row, that is $4$.
This new active box has a down-pointing arrow, so that we apply rule \eqref{eq:ksbij-down}.
These two steps are represented below.
\[
\ytableausetup{mathmode,boxsize=1.2em}
\begin{yt}
\;\ast\;  &       &       &       &   &  1 &   &        &   \\
        &\;\ast\; &\;\ast^4\; & \;4_2\;&   &        &   1     &   2     &  3      \\
        &       &       &       &   &        &     \\
        &       &       &       &   \\
\end{yt}
\quad \to \quad
 \ytableausetup{mathmode,boxsize=1.2em}
\begin{yt}
\;\ast\;  &       &       &       &   &  1 &   &        &   \\
        &\;\ast\; & {4}^4 &       &   &        &   1     &   2     &  3      \\
        &       &       &       &   &        &     \\
        &       &       &   1    &   \\
\end{yt}
\]
In the next step, the active box has a right-arrow pointing
to a column $4$ steps to the right.
The pointed box is already occupied,
so that we apply rule \eqref{eq:ksbij-rightA}.
In the next step, the active box is not critical,
so that we give to the future active box the new maximum label in its row.
\[
 \ytableausetup{mathmode,boxsize=1.2em}
\begin{yt}
\;\ast\;  &       &       &       &   &  1 &   &        &   \\
        &4 & 1 &       &   &        &   5     &   2     &  3      \\
        &       &       &       &   &        &     \\
        &       &       &   1    &   \\
\end{yt}
\quad \to \quad
 \ytableausetup{mathmode,boxsize=1.2em}
\begin{yt}
2  &       &       &       &   &  1 &   &        &   \\
        &4 & 1 &       &   &        &   5     &   2     &  3      \\
        &       &       &       &   &        &     \\
        &       &       &   1    &   \\
\end{yt}
\]
The left-most marked box is not critical --- it never is by definition ---
and we do nothing.
Finally we get the permuted tableau from \cref{Ex:PermTab}.
\end{example}

\begin{lemma}
    Let $T$ be in $\tabHook(\la,k)$.
    The output of the above-described function $\Psi(T)$
    is a permuted tableau in $\tabPerm(\la,k)$.
    Moreover, $\Psi$ is weight-preserving.
    \label{lem:Psi_WellDefined}
\end{lemma}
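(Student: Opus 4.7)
The proof of \cref{lem:Psi_WellDefined} proceeds by induction on the processing steps of the algorithm $\Psi$. Let $B_1, B_2, \ldots$ denote the marked boxes in processing order (right-to-left by column). I plan to maintain the following invariants at the \emph{start} of processing each box $B_i$: \textbf{(I1)} no two currently marked boxes share a column; \textbf{(I2)} the total number of marked boxes equals $k$; \textbf{(I3)} in each row, the currently assigned labels form an initial segment $\{1, \ldots, \ell_r\}$ of the positive integers; and \textbf{(I4)} every labeled box sits at a column whose index is at least $c_{B_i}$, the column of the current active box.

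Invariants (I1) and (I2) follow from a direct case analysis: (N) leaves all marks untouched; (D) moves one mark vertically within its column; (Re) places the new mark in a column that was empty in every row; and (Ra)/(Rb) perform the shift precisely to resolve the column clash when the pointed column is already occupied. Invariant (I3) holds because new labels are always the next integer in a row (``new maximum in the row''), while transfers and shifts preserve the multiset of labels in each row. Invariant (I4) is preserved because every newly placed label sits either at the column $c_{B_{i+1}}$ of the next active box, at the column $c_{B_i}$ of the current active box itself, or strictly to the right of $c_{B_i}$; none of these is strictly smaller than $c_{B_{i+1}}$.

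For the weight-preservation $w(\Psi(T)) = w(T)$, the key is the interpretation via \cref{eq:gen_pol_jlrmin}: the row factor $\alpha^{j_r - \lrmin(\pi_r)}$ equals the weight generated by building $\pi_r$ incrementally through successive insertions of the current maximum, with each insertion contributing $1$ if it lands at the leftmost labeled column of the row and $\alpha$ otherwise. I match each right-pointing arrow in $T$ (each contributing $\alpha$ to $w(T)$) with exactly one non-leftmost insertion in the algorithm. In cases (Re) and (Ra), the new maximum $m$ is placed at the pointed column $c_G > c_{B_i}$ in the active row; since the transferred label $a$ sits at column $c_{B_i} - 1$ to its left, $m$ is not leftmost. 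In case (Rb), the shift appends $m$ to the right end of the word $b_l, \ldots, b_1$ in the row of $b_1$, again a non-leftmost insertion. Conversely, in cases (N) and (D), invariant (I4) ensures that at the moment of insertion there are no prior labels in the target row at a column strictly smaller than the insertion column; hence the new maximum lands at the leftmost labeled position and contributes $1$.

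The main technical obstacle I foresee is the verification of invariant (I4) under the shift operations in cases (Ra) and (Rb): these move existing labels leftward within a row, and one must confirm that the leftmost column any such shift can produce is $c_{B_i}$ itself (the position DD in the diagrams), never strictly smaller. This will follow by direct inspection of the shift diagrams, where DD is drawn directly above (respectively below) the current active box and hence sits at column $c_{B_i}$. Once (I4) is established throughout, the matching of right-arrows to non-leftmost insertions yields the claimed weight-preserving property, completing the proof.
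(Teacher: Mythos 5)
Your proof is correct and takes essentially the same route as the paper's: a case-by-case verification of the structural properties (column-distinctness, labels forming a permutation, conservation of shape and number of marks) combined with tracking how each step inserts a new row maximum, using the observation behind \cref{eq:gen_pol_jlrmin} that an insertion at the beginning of the word leaves $j-\lrmin$ unchanged while an insertion inside increases it by one, matching weights $1$ and $\a$ respectively. Your invariant (I4), including its preservation under the shifts of cases (Ra)/(Rb), merely makes explicit the positional fact the paper leaves implicit when asserting that cases (N) and (D) insert the new maximum \emph{at the beginning} of the row permutation.
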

\begin{proof}
    It is straightforward to check by a case-by-case analysis
    that the following is true at each stage of the construction.
    \begin{itemize}
        \item Marked boxes are in different columns.
        \item The labeled marked boxes in a given row
            form a permutation. Indeed, when we label a new box
            in a row, we always assign it the $1 + \max(b_1,\dots,b_l)$,
            where $b_1$, \dots, $b_l$ are the labels already present in this row.
            This clearly preserves the fact that the labels form a permutation.
        \item Neither the shape, nor the total number of marked boxes are changed.
    \end{itemize}

    The weight-preserving property is also obtained by a case-by-case analysis:
    \begin{itemize}
        \item In cases \eqref{eq:ksbij-N}
            and \eqref{eq:ksbij-down}, we add a new maximum 
            \emph{at the beginning} of the permutation $\pi$ corresponding to some row.
            This does not change the statistics $j-\lrmin(\pi)$:
            the number of left-to-right minima and the size of the permutation
            both increase by $1$.
            This fits with boxes without arrows (either critical or non-critical)
            and critical boxes
            with down-pointing arrows having weight $1$.
        \item In cases \eqref{eq:ksbij-right},
            \eqref{eq:ksbij-rightA} and \eqref{eq:ksbij-rightB},
            we add a new maximum
            \emph{inside} the permutation $\pi$ corresponding to some row.
            Then the statistics $j-\lrmin(\pi)$ increases by $1$:
            the size increases, but not the number of left-to-right minima.
            This fits with critical boxes with right-pointing arrows having weight $\a$.
            \qedhere
    \end{itemize}
\end{proof}

\subsubsection{The reverse map $\Phi$}
We start from a permuted tableaux $T'$.
To define its image $\Phi(T')$,
we shall process marked boxes from left to right.
As before, the box being processed is called \emph{active}.
At each stage, marked boxes on the right of the active box
have labels, while marked boxes on the left are unlabeled.
Already processed critical boxes might have arrows.

As above, we describe our procedure in a case-by-case fashion.
\begin{description}
    \item[Case (M)] \textbf{the active box has the maximal label in its row and
        there is no marked box in the column immediately to the right of the active box
        strictly below the active box.}
        When we apply $\Psi$,
        this situation may only occur after a step \eqref{eq:ksbij-N}.
        To reverse this step, one has just to remove the label of the active box.
\begin{equation}\label{eq:ksbij-M}
    \begin{array}{c}\begin{tikzpicture}[->,thick,
    block/.style={rectangle, draw=black,minimum size=6mm},
every loop/.style={},node distance=6mm]
\node[block] (X) {$m$};
\end{tikzpicture}\end{array}
\quad
\Longrightarrow
\quad
\begin{array}{c}
\begin{tikzpicture}[->,thick,
    block/.style={rectangle, draw=black,minimum size=6mm},
every loop/.style={},node distance=6mm]
\node[block] (X) {$\ast$};
\end{tikzpicture}\end{array}
\tag{M}
\end{equation}
A marked box in the column immediately to the right of the active box
strictly below the active box 
would correspond to
 the forbidden pattern in Knop--Sahi hook tableau,
 \cref{item:forbidden} of \cref{def:KSHT}
 (which is allowed in permuted tableaux).
This explains why this case has to be treated separately;
see case (B) below.

    \item[Case (E)] \textbf{
The label of the active box is not the maximum in its row
and the column immediately to the right of the active box is empty.}
        When we apply $\Psi$,
        this situation may only occur after a step \eqref{eq:ksbij-right}.
        To reverse this step, we should do the following.

Call $a$ the label of the active box. 
Look for the box $\Box_m$ containing the maximal label in the row of the active box.
We unmark this box, and replace it by a new marked box immediately to the right of the active box.
This new marked box gets label $a$ --- the active box is now unlabeled ---
and an arrow pointing to $\Box_m$. \vspace{-5mm}
\begin{equation}\label{eq:ksbij-E}
\begin{tikzpicture}[->,thick,baseline=-0.5ex,
    block/.style={rectangle, draw=black,minimum size=6mm},
every loop/.style={},node distance=6mm]
\node[block] (X) {$a$};
\node[block] [right of=X] (A) {$ $};
\node[block] [right=24mm of A] (B) {$m$};
\end{tikzpicture}
\quad
\Longrightarrow
\quad
\begin{tikzpicture}[->,thick,baseline=-0.5ex,
    block/.style={rectangle, draw=black,minimum size=6mm},
every loop/.style={},node distance=6mm]
\node[block] (X) {$\ast$};
\node[block] [right of=X] (A) {$a$};
 \node[block] [right=24mm of A] (B) {$ $};
\path[every node/.style={font=\sffamily\small}]
(A) edge[bend left=45] (B);
\end{tikzpicture}\tag{E}
\end{equation}

    \item[Case (A)] \textbf{
The label of the active box is not the maximum in its row
and there is a marked box $\Box_b$ in the column immediately
to the right of the active box weakly above the active box.}
        When we apply $\Psi$,
        this situation may only occur after a step \eqref{eq:ksbij-rightA}.
        To reverse this step, we should do the following.

Do as in case $(E)$. 
In addition, we perform a shift of the marked boxes in the row
as $\Box_b$ as in the following picture.
\begin{equation}
\begin{tikzpicture}[->,thick,baseline=6.5ex,
    block/.style={rectangle, draw=black,minimum size=6mm},
every loop/.style={},node distance=6mm]
\node[block] (X) {$a$};
\node[block] [right of=X] (A) {$ $};
\node[block] [above=12mm of A] (DD) {$b_l$};
\node[block] [right=36mm of A] (G) {$m$};
\node[block] [right=6mm of DD] (B3) {$b_{l-1}$};
\node [right=15mm of DD] (BB) {$\cdots$};
\node[block] [right=24mm of DD] (B2) {$b_1$};
\node[block] [right=36mm of DD] (B1) {$ $};
\end{tikzpicture}
\Longrightarrow
\begin{tikzpicture}[->,thick,baseline=6.5ex,
    block/.style={rectangle, draw=black,minimum size=6mm},
every loop/.style={},node distance=6mm]
\node[block] (X) {$\ast$};
\node[block] [right of=X] (A) {$a$};
\node[block] [above=12mm of A] (DD) {$ $};
\node[block] [right=36mm of A] (G) {$ $};
\node[block] [right=6mm of DD] (B3) {$b_l$};
\node [right=15mm of DD] (BB) {$\cdots$};
\node[block] [right=24mm of DD] (B2) {$b_2$};
\node[block] [right=36mm of DD] (B1) {$b_1$};
\path[every node/.style={font=\sffamily\small}]
(A) edge[bend left=-20] (G);
\end{tikzpicture}\tag{A}
\label{eq:ksbij-A}
\end{equation}

 \item[Case (B)] \textbf{there is a marked box $\Box$ in the column immediately
     to the right of the active box strictly below the active box.}

 \emph{Subcase (Bm): the label of this marked box $\Box_m$ is the maximum $m$
 in its row.}
        When we apply $\Psi$,
        this situation may only occur after a step \eqref{eq:ksbij-down}.
        To reverse this step, we should do the following.

 Unmark box $m$ and mark instead the box immediately to the right of the active box.
 Assign to this new marked box an arrow pointing to $\Box_m$ 
 and give it also the label of the active box --
 the active box is now unlabeled.
\begin{equation}\label{eq:ksbij-Bm}
\begin{tikzpicture}[->,thick,baseline=-7.5ex,
    block/.style={rectangle, draw=black,minimum size=6mm},
every loop/.style={},node distance=6mm]
\node[block] (X) {$a$};
\node[block] [right of=X] (A) {$ $};
\node[block] [below=12mm of A] (B) {$m$};
\end{tikzpicture}
\quad
\Longrightarrow
\quad
\begin{tikzpicture}[->,thick,baseline=-7.5ex,
    block/.style={rectangle, draw=black,minimum size=6mm},
every loop/.style={},node distance=6mm]
\node[block] (X) {$\ast$};
\node[block] [right of=X] (A) {$a$};
\node[block] [below=12mm of A] (B) {$ $};
\path[every node/.style={font=\sffamily\small}]
(A) edge[bend left=45] 
(B);
\end{tikzpicture}
\tag{Bm}
\end{equation}

\emph{Subcase (Bn): the label of this marked box $\Box_b$ is not the maximum $m$
 in its row.}
        When we apply $\Psi$,
        this situation may only occur after a step \eqref{eq:ksbij-rightB}.
        To reverse this step, we should do the following.

Look for the box $\Box_m$ with maximal label $m$ in the row of $\Box_b$.
Unmark this box and mark instead the box immediately to the right of the active box.
Assign to this new marked box a right arrow pointing to the column of $\Box_m$,
and give it the label $a$ of the active box -- the active box is now unlabeled.
Finally, perform a shift as in Case (A) to avoid having two boxes in the same column.
\begin{equation}\label{eq:ksbij-Bn}
\begin{tikzpicture}[->,thick,baseline=-7.5ex,
    block/.style={rectangle, draw=black,minimum size=6mm},
every loop/.style={},node distance=6mm]
\node[block] (X) {$a$};
\node[block] [right of=X] (A) {$ $};
\node[block] [below=12mm of A] (DD) {$b_l$};
\node[block] [right=36mm of A] (G) {$ $};
\node[block] [right=6mm of DD] (B3) {$b_{l-1}$};
\node [right=15mm of DD] (BB) {$\cdots$};
\node[block] [right=24mm of DD] (B2) {$b_1$};
\node[block] [right=36mm of DD] (B1) {$m$};
\end{tikzpicture}
\Longrightarrow
\begin{tikzpicture}[->,thick,baseline=-7.5ex,
    block/.style={rectangle, draw=black,minimum size=6mm},
every loop/.style={},node distance=6mm]
\node[block] (X) {$\ast$};
\node[block] [right of=X] (A) {$a$};
\node[block] [below=12mm of A] (DD) {$ $};
\node[block] [right=36mm of A] (G) {$ $};
\node[block] [right=6mm of DD] (B3) {$b_l$};
\node [right=15mm of DD] (BB) {$\cdots$};
\node[block] [right=24mm of DD] (B2) {$b_2$};
\node[block] [right=36mm of DD] (B1) {$b_1$};
\path[every node/.style={font=\sffamily\small}]
(A) edge[bend left=20] (G);
\end{tikzpicture}\tag{Bn}
\end{equation}
\end{description}
When all marked boxes have been processed,
we get a tableau $T$ with marked boxes and arrows pointing from critical boxes.
We set $\Phi(T')=T$.
\begin{example}
Here is an example of the inverse mapping.
Start from the following tableau
\[
T'=\ytableausetup{mathmode,boxsize=1.2em}
\begin{yt}
 \;& 2 &  &  &  &  &  &  & 3 & & 1 \\
 2 &   & &1 &  &  &  & 3&   &4&   \\
 & & 2 & & 3 & & 1
\end{yt}.
\]
The left-most marked box is the first active box.
As it is not the maximal in its row and the box at its upper right is marked,
we apply rule $(A)$.
Note that the maximum label in its row is $4$. We get
\[
\ytableausetup{mathmode,boxsize=1.2em}
\begin{yt}
 \;&   &  &  &  &  &  &        &  2 & 3 & 1 \\
 \ast & 2^8 & &1 &  &  &  & 3     &   & &   \\
 & & 2 & & 3 & & 1
\end{yt},
\]
where the superscript $8$ represents an arrow pointing $8$ boxes to the right,
that is to the box which used to contain $4$.

The new active box is the $2$ in the second column.
There is a marked box immediately to its right and below it,
so that we are in case $(B)$. 
As this box is not the maximum in its row, we apply rule $(Bn)$.
\[
\ytableausetup{mathmode,boxsize=1.2em}
\begin{yt}
 \;&   &  &  &  &  &  &        &  2 & 3 & 1 \\
 \ast & \ast^8 & 2^2 &1 &  &  &  & 3     &   & &   \\
 & & & & 2 & & 1
\end{yt},
\]
where the superscript $2$ indicates a right arrow pointing two boxes to the right,
which is the column that used to contain the maximum of the third row.

We proceed further and apply rules $(A)$ and $(Bm)$ to the boxes in the third and fourth columns,
respectively. We get:
\[
\ytableausetup{mathmode,boxsize=1.2em}
\begin{yt}
 \;&   &  &  &  &  &  &        &  2 & 3 & 1 \\
 \ast & \;\ast^8\; & \ast^2 &2^4 &  &  &  & 1     &   & &   \\
 & & & & 2 & & 1
\end{yt}
\quad \to \quad
\begin{yt}
 \;&   &  &  &  &  &  &        &  2 & 3 & 1 \\
 \;\ast & \;\ast^8\; & \ast^2 &\ast^4 &2_1  &  &  & 1     &   & &   \\
 & & & & & & 1
\end{yt}.
\]
The next active box (in the fifth column) has the maximal label in its row,
so that we just remove its label, according to rule $(M)$.  
After this operation, the same holds for boxes in seventh and eighth column,
so we also remove their label (see left figure below).
We now perform rule $(A)$ to the box in the ninth column.
The maximum in its row is $3$. We get
\[
\ytableausetup{mathmode,boxsize=1.2em}
\begin{yt}
 \;&   &  &  &  &  &  &        &  2 & 3 & 1 \\
 \;\ast & \;\ast^8\; & \ast^2 &\ast^4 &\ast_1  &  &  & \ast     &   & &   \\
 & & & & & & \ast
\end{yt}
\quad \to \quad
\begin{yt}
 \;&   &  &  &  &  &  &        &  \ast & 2^0 & 1 \\
 \;\ast & \;\ast^8\; & \ast^2 &\ast^4 &\ast_1  &  &  & \ast     &   & &   \\
 & & & & & & \ast
\end{yt}
\]
The superscript $0$ indicates a right arrow pointing to the box itself.
This is a degenerate case of rule $(A)$, where the maximum in the row of the active box
happens to be in the box immediately to its right.

The last two steps consist in applying twice rule $(M)$,
that is in removing the last two labels.
We get
\[
\Phi(T')=\ytableausetup{mathmode,boxsize=1.2em}
\begin{yt}
 \;&   &  &  &  &  &  &        &  \ast & \ast^0 & \ast \\
 \;\ast & \;\ast^8\; & \ast^2 &\ast^4 &\ast_1  &  &  & \ast     &   & &   \\
 & & & & & & \ast
\end{yt}.
\]
\end{example}

%

\begin{lemma}
    Let $T'$ be in $\tabPerm(\la,k)$.
    Then $\Phi(T')$ is an element of $\tabHook(\la,k)$.
    Moreover, $\Phi$ is the compositional inverse of $\Psi$.
    \label{lem:PsiPhiInverse}
\end{lemma}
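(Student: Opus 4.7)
The plan is to verify, by a case-by-case analysis, that the five rules defining $\Phi$ are precisely the local inverses of the five cases defining $\Psi$, and then to use this to conclude both that $\Phi(T') \in \tabHook(\la,k)$ and that $\Phi \circ \Psi = \mathrm{id}$ (and symmetrically $\Psi \circ \Phi = \mathrm{id}$). Concretely, I would first pair up the cases: \eqref{eq:ksbij-M} inverts \eqref{eq:ksbij-N}, \eqref{eq:ksbij-E} inverts \eqref{eq:ksbij-right} (subcase (Re)), \eqref{eq:ksbij-A} inverts \eqref{eq:ksbij-rightA}, \eqref{eq:ksbij-Bm} inverts \eqref{eq:ksbij-down}, and \eqref{eq:ksbij-Bn} inverts \eqref{eq:ksbij-rightB}. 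For each pair, the verification amounts to checking that applying one rule followed by its partner restores the original local configuration of marked boxes, labels, and arrows.

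Next, I would verify that the case distinctions used by $\Phi$ are well-defined, in the sense that at each stage of the construction exactly one of (M), (E), (A), (Bm), (Bn) applies to the current active box. This follows from the invariants maintained during $\Psi$: marked boxes lie in distinct columns (so there is at most one marked box in the column immediately to the right of the active box), the labels in any row form a permutation (so ``has the maximum label in its row'' is meaningful), and the only situation in which a marked box lies immediately down-right of another is the one produced by rule \eqref{eq:ksbij-down}, precisely the situation handled by (Bm). To make this rigorous, I would check inductively (processing from left to right, as $\Phi$ does) that before each step these invariants hold on the portion of the diagram containing the still-labeled marked boxes, and that after applying the appropriate $\Phi$-rule the partially built $\tabHook$ tableau satisfies the constraints of \cref{def:KSHT} on the portion already processed: no marked box sits down-right of another in adjacent columns, arrows from critical boxes point weakly to the right in the same row or strictly below in the same column, and all marked boxes remain in distinct columns.

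Putting the two previous paragraphs together, running $\Phi$ on $\Psi(T)$ reads off and reverses the same sequence of local moves that $\Psi$ performed, so $\Phi(\Psi(T))=T$; conversely, running $\Psi$ on $\Phi(T')$ reverses each move of $\Phi$, giving $\Psi(\Phi(T'))=T'$. Combined with \cref{lem:Psi_WellDefined}, this establishes that $\Psi$ and $\Phi$ are mutually inverse bijections between $\tabHook(\la,k)$ and $\tabPerm(\la,k)$, and in particular that $\Phi(T') \in \tabHook(\la,k)$.

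The main obstacle I expect is the bookkeeping in the subcases involving the horizontal shift of the neighbouring row, namely \eqref{eq:ksbij-rightA}/\eqref{eq:ksbij-A} and \eqref{eq:ksbij-rightB}/\eqref{eq:ksbij-Bn}. There, one must argue carefully that the shift introduced by $\Psi$ is exactly undone by the shift introduced by $\Phi$, that no collision of columns is created at intermediate stages, and that the labels $b_1,\dots,b_l$ are permuted consistently so as to recover the original permutation recorded in that row. A helpful observation for this step is that in the shifted row the set of labels is preserved and only the leftmost/rightmost position is altered, which matches the fact that the labels in a row of a permuted tableau are determined up to their positions by the rule ``new maximum inserted at a prescribed slot'' used in $\Psi$.
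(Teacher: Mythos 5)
Your proposal is correct and takes essentially the same route as the paper's proof: the same pairing of $\Phi$-rules with $\Psi$-rules ((M) with (N), (E) with (Re), (A) with (Ra), (Bm) with (D), (Bn) with (Rb)), together with verifying the invariants along the procedure to conclude $\Phi(T')\in\tabHook(\la,k)$ and that the two maps are mutually inverse. The paper's own argument is even terser---it simply asserts these checks are straightforward---so your extra attention to the exhaustiveness of the case distinctions and to the shift bookkeeping merely makes the same argument more explicit.
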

\begin{proof}
    It is straightforward to check that, along the procedure,
    we never have two marked boxes in the same column
    or two unlabeled marked boxes forming the forbidden
    pattern of Knop-Sahi hook tableaux.
    Note also that we only assign arrows to critical boxes,
    pointing either to a box weakly on its right in the same row
    or strictly below in the same column.
    Moreover, neither the shape, nor the number of marked boxes 
    is modified.
    This implies that the output $\Phi(T')$ is an element of $\tabHook(\la,k)$.

    Now, $\Phi$ is the compositional inverse of $\Psi$
    since, by construction,
    every step of $\Phi$ reverses a step of $\Psi$:
    \eqref{eq:ksbij-M} for \eqref{eq:ksbij-N},
    \eqref{eq:ksbij-E} for \eqref{eq:ksbij-right},
    \eqref{eq:ksbij-A} for \eqref{eq:ksbij-rightA},
    \eqref{eq:ksbij-Bm} for \eqref{eq:ksbij-down},
    and \eqref{eq:ksbij-Bn} for \eqref{eq:ksbij-rightB}.
\end{proof}

\subsubsection{Our new formula: second version}
Here we view a Young diagram $\la$ as a set of boxes.
In particular, $A \subseteq \la$ means that we consider a subset
of the set of boxes of $\la$.
Such a subset is called \emph{column-distinct} if no two elements of $A$
are in the same column.
Rows of the diagram $\la$ will also be considered
as subsets of the set of boxes of $\la$.

With this viewpoint, \cref{Prop:NewCombFormula1} can be rewritten as follows.
\begin{theorem}
For any integer $k \ge 1$ and Young diagram $\la$, one has:
\[
\tfrac{1}{k!} \shJack_{(k)}(\lambda)=\Ko_{(k)}^{(\alpha)}(\lambda) =  \sum_{\substack{ A \subseteq \la, \ |A|=k \\ \text{column-dinstinct}}}
\left(
\prod_{\substack{R \text{ row} \\ \text{of }\la}} P_{|R \cap A|}(\a)
\right),
\]
where, for $i \ge 0$, we set $P_i(\a) = \prod_{j=0}^{i-1} (1 + j \, \a)$.
    \label{thm:NewCombFormula2}
\end{theorem}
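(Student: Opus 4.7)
The plan is to observe that the first equality $\tfrac{1}{k!}\shJack_{(k)}(\lambda)=\Koa_{(k)}(\lambda)$ is already established as \cref{Eq:ShJKoa_OnePart}, so the only thing to do is to derive the second equality from the combinatorial description in \cref{Prop:NewCombFormula1}, which asserts that
\[
\Koa_{(k)}(\lambda) = \sum_{T \in \tabPerm(\lambda,k)} w(T).
\]

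The key observation is that a permuted tableau $T \in \tabPerm(\lambda,k)$ is equivalent to the following two pieces of data: (i) its underlying set $A$ of marked boxes, which by the very definition of $\tabPerm(\lambda,k)$ is a column-distinct subset of $\lambda$ of size $k$; and (ii) for each row $R$ of $\lambda$, independently, a permutation $\pi_R \in \Sn_{|R\cap A|}$ encoding the labels of the marked boxes of $R$ read from left to right. Moreover, the weight of $T$ factorizes as
\[
w(T) = \prod_{\substack{R \text{ row} \\ \text{of } \lambda}} \a^{|R \cap A| - \lrmin(\pi_R)}.
\]

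Performing the sum over the row-permutations first, for fixed $A$, and then over $A$, I would obtain
\[
\sum_{T \in \tabPerm(\lambda,k)} w(T) = \sum_{\substack{A \subseteq \lambda,\ |A|=k \\ \text{column-distinct}}} \prod_{\substack{R \text{ row} \\ \text{of } \lambda}} \Bigg( \sum_{\pi \in \Sn_{|R \cap A|}} \a^{|R \cap A| - \lrmin(\pi)} \Bigg).
\]
Then I would apply the generating polynomial identity \eqref{eq:gen_pol_jlrmin} with $t=\a$ to each inner sum: it equals $\prod_{i=1}^{j-1}(1+i\a)$, where $j=|R\cap A|$, which coincides with $P_j(\a) = \prod_{i=0}^{j-1}(1+i\a)$ since the $i=0$ factor is $1$. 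This is exactly the stated formula.

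The reduction outlined above is entirely routine; there is no real obstacle at this step. All of the substantive combinatorial work has already been carried out in \cref{Prop:NewCombFormula1}, whose proof is the weight-preserving bijection $\Psi \colon \tabHook(\lambda,k) \to \tabPerm(\lambda,k)$ constructed via the case analysis in \eqref{eq:ksbij-N}--\eqref{eq:ksbij-rightB} and inverted by $\Phi$ via \eqref{eq:ksbij-M}--\eqref{eq:ksbij-Bn}.
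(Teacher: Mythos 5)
Your proposal is correct and follows essentially the same route as the paper: the paper's proof likewise invokes \cref{Eq:ShJKoa_OnePart} for the first equality, identifies a permuted tableau with its column-distinct set of marked boxes together with one label-permutation per row, and sums over those permutations row by row using \eqref{eq:gen_pol_jlrmin} to produce the factors $P_{|R\cap A|}(\a)$. Nothing is missing.
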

\begin{proof}
  Recall that the first equality has been proved in \cref{Subsect:change_bases},
     so let us focus on the second.

    To a permuted tableau $T \in \tabPerm(\la,k)$,
    we can associate its set $M(T)$ of marked boxes:
    this is a column-distinct subset of cardinality $k$
    of the set of boxes of $\la$.

    Conversely each column-distinct subset $A$ of cardinality $k$ of $\la$
    is obtained from $\prod_{R} |R \cap A|!$ permuted tableaux,
    where the product is taken over rows of $\la$.
    Indeed to obtain such a tableau, one has to choose for each row $R$
    a permutation of size $|R \cap A|$, which gives the labels
    of marked boxes in this row.

    From the definition of the weight of permuted tableaux and
    \cref{eq:gen_pol_jlrmin}, we get that, for any
    column-distinct subset $A$ of cardinality $k$ of $\la$,
    \[\sum_{ \substack{ T \in \tabPerm(\la,k) \\ M(T)=A }} w(T)
    =\prod_{\substack{ R \text{ row} \\ \text{of }\la }} P_{|R \cap A|}(\a).\]
    Summing over $A$ and using \cref{Prop:NewCombFormula1},
    we get our theorem.
\end{proof}

\subsection{Nonnegativity in  \texorpdfstring{$\a$}{a} falling factorial basis}
\label{SubsectKkFFPos}

\begin{proof}[Proof of \cref{ThmOnePart}]
We start from \cref{thm:NewCombFormula2}:
\[
\tfrac{1}{k!} \shJack_{(k)}(\lambda)=
\Ko_{(k)}^{(\alpha)}(\lambda) =  \sum_{A \in \CD(\la,k)} w(A),\]
where $\CD(\la,k)$ is the set of column-distinct subsets $A$ of $\la$ of size $k$
and $w(A) \coloneqq \prod_R P_{|R \cap A|}(\a)$, the product running over rows of $\la$.
We will explain how to associate to sets $A$ a \emph{skeleton} $\hat{A}$,
and split the sum above according to this skeleton.

Let $\la=\rvec^\pvec$ be given by its multirectangular coordinates; see \cref{FigDiagMulti}.
Each box in $\la$ belongs to some unique rectangle $p_i \times r_j$. 
We label the corresponding row and column by $i$ and $j$, respectively.
Then we remove rows and columns which contain no boxes of $A$.
For example, \cref{eq:skeletonexample} shows a set $A$ with $5$ boxes
in $\la=\rvec^\pvec$
for $\pvec=(1,2,2,3)$, $\qvec = (4,2,3,2)$
and the construction of its skeleton $\hat{A}$.

\begin{equation}
\ytableausetup{mathmode,boxsize=.7em}
A=
\begin{yt}
\none[\;]  & \none[{\scriptstyle 4}] & \none[{\scriptstyle 4}] & \none[{\scriptstyle 4}] & \none[{\scriptstyle 4}] &
\none[{\scriptstyle 3}] & \none[{\scriptstyle 3}] & \none[{\scriptstyle 2}] &\none[{\scriptstyle 2}] & \none[{\scriptstyle 2}] &
\none[{\scriptstyle 1}] & \none[{\scriptstyle 1}] \\
    \none[{\scriptstyle 1}] &\; &\; & \;  & \;  & \; & \; & \; & \; & \ast & \ast & \; \\
    \none[{\scriptstyle 2}] &\; &\; & \;  & \;  & \; & \; & \; & \; & \; \\
    \none[{\scriptstyle 2}] &\ast &\; & \;  &\;  & \; & \; & \; & \; & \; \\
    \none[{\scriptstyle 3}] &\; &\; & \;  & \;  & \; & \; \\
    \none[{\scriptstyle 3}] &\; &\; & \;  & \;  & \; & \; \\
    \none[{\scriptstyle 4}] &\; &\; & \;  & \; \\
    \none[{\scriptstyle 4}] &\; &\; & \;  & \; \\
    \none[{\scriptstyle 4}] &\; &\ast& \;  & \ast
\end{yt}
\to\
\hat{A}=
\begin{yt}
    \none[\;]  & \none[{\scriptstyle 4}] & \none[{\scriptstyle 4}] & \none[{\scriptstyle 4}]
    & \none[{\scriptstyle 2}] &  \none[{\scriptstyle 1}] \\
    \none[{\scriptstyle 1}] &\; &\; & \; & \ast & \ast  \\
    \none[{\scriptstyle 2}] &\ast & \; & \; \\
 \none[{\scriptstyle 4}] &\; & \ast & \ast
\end{yt}
\label{eq:skeletonexample}
\end{equation}
Fix a skeleton $\hat{A_0}$.
We define $a_i$ and $b_i$ as the number of rows and columns, respectively, labeled $i$.
In the example above, $\avec=(1,1,0,1)$ and $\bvec=(1,1,0,3)$.

Note that if the size $k$ of $|A|$ and the number of entries in $\pvec$ and $\rvec$ is fixed,
there is only a finite number of such possible skeletons.
Moreover, $A$ being column-distinct is equivalent to its skeleton being column-distinct.
Also note that if two sets $A$ have the same skeleton, they also have the same weight $w(A)$.
We therefore have:
\begin{equation}
\sum_{ \substack{ A \in \CD(\rvec^\pvec, k), \\ \hat{A}=\hat{A_0} } } w(A) = w(A_0) \cdot | \{ A \in \CD(\rvec^\pvec, k), \hat{A}=\hat{A_0} \}|
\label{Eq:SumTech}
\end{equation}

It suffices to show that for each skeleton $\hat{A_0}$,
the quantity \eqref{Eq:SumTech}
has non-negative coefficients in the falling factorial basis.
Since $w(A_0)$ is a nonnegative polynomial in $\a$,
we focus on the number of sets $A$ with a given skeleton $\hat{A_0}$.
\medskip

To construct such a set $A$ of shape $\la=\rvec^\pvec$,
we need to choose how to distribute the rows and columns.
Consider $\avec$ and $\bvec$ in the skeleton $\hat{A_0}$ as above.
For each $i$, we need to choose rows of $\la$
corresponding to the $a_i$ rows labeled $i$ in $\hat{A}=\hat{A_0}$.
Since $\la$ has $p_i$ rows labeled $i$,
this can be done in $\binom{p_i}{a_i}$ ways.
A similar reasoning holds for selecting the columns, so there are in total
\[
 \prod_{i} \binom{p_i}{a_i} \times \prod_{j} \binom{r_j}{b_j}
\]
subsets $A$ of $\la$ with shape $\rvec^\pvec$ that have $\hat{A_0}$ as skeleton.
This expression is non-negative in the falling factorial basis.
\end{proof}

\begin{remark}
    A similar proof, starting directly from \cref{Prop:Kok_FromKS},
    would not be possible. 
    Indeed, Condition \ref{item:forbidden} in \cref{def:KSHT},
    as well as the definition of critical boxes,
    do not depend only on the skeleton as they involve
    the notion of \emph{adjacent columns}.
    \label{rk:KSTableuaux_NotAdequate}
\end{remark}

\section{Computer experiments}
\label{SectComputer}

We now briefly describe our computer methods to verify \cref{ConjMain} for small $\mu$.
To our knowledge, there is no straightforward way to obtain $\jackJ^{\star,(\alpha)}_\mu(\rvec^\pvec)$
in terms of multi-rectangular coordinates. 
Our method is based on data available from Lasalle's web page\footnote{\url{http://igm.univ-mlv.fr/~lassalle/free.html}}:
Lassalle gives a table of expressions of
 $\Ch_{\mu}^{(\a)}$ in terms of \emph{free cumulants},
 for small partitions $\mu$.
A similar table for shifted Jack is computed using \cref{PropShJackOnCha}.
Finally, free cumulants are expressed in terms of multi-rectangular coordinates,
using the recursive equations given in \cite{RattanStanleyTopDegree}
(we have slightly adapted these equations 
since Rattan considered Stanley's version of multirectangular coordinates,
while we use the more symmetric one, but this does not create any difficulty).
Then it is straightforward to expand this in the falling factorial basis and check for positivity.


It would be interesting to find an algorithm to directly compute
$\jackJ^{\star,(\alpha)}_\mu(\rvec^\pvec)$
in terms of multi-rectangular coordinates. 

\section*{Acknowledgements}
We thank Maciej Dołęga and Piotr \'Sniady for numerous discussions on topics related to this work.
We thank Michel Lassalle for making his numerical data available on his web page.

VF is partially supported by the grant SNF-149461 ``Dual combinatorics of Jack polynomials''.
  
PA is partially supported by the grant  ``Knut and Alice Wallenberg Foundation'' (2013.03.07).

\bibliographystyle{alpha}
\bibliography{./biblio}

\def\cprime{$'$}
\begin{thebibliography}{AFNT15}

\bibitem[AFNT15]{FerayEtJeans}
J.-C. Aval, V.~F{\'e}ray, J.-C. Novelli, and J.-Y. Thibon.
\newblock {Quasi-symmetric functions as polynomial functions on Young
  diagrams}.
\newblock {\em J. Alg. Comb.}, 41(3):669--706, 2015.

\bibitem[Ale15]{MathOverflow194852}
P.~Alexandersson.
\newblock {Certain signed sum over $S_n$}.
\newblock MathOverflow, 2015.
\newblock URL:http://mathoverflow.net/q/194852 (version: 2015-04-27).

\bibitem[Bia03]{Biane2003}
Ph. Biane.
\newblock {Characters of symmetric groups and free cumulants}.
\newblock In {\em {Asymptotic combinatorics with applications to mathematical
  physics (St. Petersburg, 2001)}}, volume 1815 of {\em {Lecture Notes in
  Math.}}, pages 185--200. Springer, Berlin, 2003.

\bibitem[BL89]{BiedenharnLouckFactorialSchur}
L.C. Biedenharn and J.D. Louck.
\newblock {A new class of symmetric polynomials defined in terms of tableaux}.
\newblock {\em Adv. Appl. Math.}, 10(4):396--438, 1989.

\bibitem[CFF13]{NousUnicellulaire}
G.~Chapuy, V.~F{\'e}ray, and E.~Fusy.
\newblock {A simple model of trees for unicellular maps}.
\newblock {\em J. Comb. Th. Series A}, 120:2064--2092, 2013.

\bibitem[DF{\'S}10]{NousKerovExplicitInterpretation}
M.~Do{\l}\k{e}ga, V.~F{\'e}ray, and P.~{\'S}niady.
\newblock {Explicit combinatorial interpretation of {K}erov character
  polynomials as numbers of permutation factorizations}.
\newblock {\em Adv. Math.}, 225(1):81--120, 2010.

\bibitem[DF{\'S}14]{DFSOrientability}
M.~Do{\l}\k{e}ga, V.~F{\'e}ray, and P.~{\'S}niady.
\newblock {Jack polynomials and orientability generating series of maps}.
\newblock {\em S{\'e}m. Loth. Comb.}, B70j:50 pp. (electronic), 2014.

\bibitem[F{\'e}r09]{FerayPreuveKerov}
V.~F{\'e}ray.
\newblock {Combinatorial interpretation and positivity of Kerov{\rq}s character
  polynomials}.
\newblock {\em J. Alg. Comb.}, 29(4):473--507, 2009.

\bibitem[F{\'e}r10]{FerayPreuveStanley}
V.~F{\'e}ray.
\newblock {Stanley{\rq}s formula for characters of the symmetric group}.
\newblock {\em Ann. Comb.}, 13(4):453--461, 2010.

\bibitem[F{\'S}11a]{NousBoundsOnCharacters}
V.~F{\'e}ray and P.~{\'S}niady.
\newblock {Asymptotics of characters of symmetric groups related to {S}tanley
  character formula}.
\newblock {\em Ann. Math.}, 173(2):887--906, 2011.

\bibitem[F{\'S}11b]{NousZonal}
V.~F{\'e}ray and P.~{\'S}niady.
\newblock {Zonal polynomials via Stanley's coordinates and free cumulants}.
\newblock {\em J. Alg.}, 334(1):338--373, 2011.

\bibitem[GR07]{GouldenRattan2007}
I.~P. Goulden and A.~Rattan.
\newblock {An explicit form for {K}erov's character polynomials}.
\newblock {\em Trans. Amer. Math. Soc.}, 359(8):3669--3685 (electronic), 2007.

\bibitem[IO02]{IvanovOlshanski2002}
V.~Ivanov and G.~Olshanski.
\newblock {Kerov's central limit theorem for the {P}lancherel measure on
  {Y}oung diagrams}.
\newblock In {\em {Symmetric functions 2001: surveys of developments and
  perspectives}}, volume~74 of {\em {NATO Sci. Ser. II Math. Phys. Chem.}},
  pages 93--151. Kluwer Acad. Publ., Dordrecht, 2002.

\bibitem[Jac71]{Jack1970/1971}
H.~Jack.
\newblock {A class of symmetric polynomials with a parameter}.
\newblock {\em Proc. Roy. Soc. Edinburgh Sect. A}, 69:1--18, 1970/1971.

\bibitem[KO94]{KerovOlshanskiPolFunc}
S.~Kerov and G.~Olshanski.
\newblock {Polynomial functions on the set of Young diagrams}.
\newblock {\em Comptes rendus de l'Acad{\'e}mie des sciences. S{\'e}rie 1,
  Math{\'e}matique}, 319(2):121--126, 1994.

\bibitem[KS96]{KnopSahiShiftedSym}
F.~Knop and S.~Sahi.
\newblock {Difference equations and symmetric polynomials defined by their
  zeros}.
\newblock {\em Internat. Math. Res. Notices}, 1996(10):473--486, 1996.

\bibitem[KS97]{KnopSahiCombinatoricsJack}
F.~Knop and S.~Sahi.
\newblock {A recursion and a combinatorial formula for Jack polynomials}.
\newblock {\em Inv. Math.}, 128(1):9--22, 1997.

\bibitem[Las98]{LassalleBinomiauxGeneralises}
M.~Lassalle.
\newblock {Coefficients binomiaux g{\'e}n{\'e}ralis{\'e}s et polyn{\^o}mes de
  Macdonald}.
\newblock {\em J. Funct. Anal.}, 158(2):289--324, 1998.

\bibitem[Las08]{LassalleConjecturePQ}
M.~Lassalle.
\newblock {A positivity conjecture for Jack polynomials}.
\newblock {\em Math. Res. Lett.}, 15(4):661--681, 2008.

\bibitem[Las09]{LassalleFreeCumulants}
M.~Lassalle.
\newblock {Jack polynomials and free cumulants}.
\newblock {\em Adv. Math.}, 222(6):2227--2269, 2009.

\bibitem[Mac92]{MacdonaldVariationsSchur}
I.~G. Macdonald.
\newblock {Schur functions: theme and variations}.
\newblock {\em S{\'e}m. Loth. Comb.}, 28:5--39, 1992.

\bibitem[Mac95]{Macdonald1995}
I.~G. Macdonald.
\newblock {\em {Symmetric functions and {H}all polynomials}}.
\newblock {Oxford Mathematical Monographs}. The Clarendon Press Oxford
  University Press, New York, second edition, 1995.
\newblock With contributions by A. Zelevinsky, Oxford Science Publications.

\bibitem[MS99]{MolevSaganLRFactorialSchur}
A.~Molev and B.~Sagan.
\newblock {A Littlewood-Richardson rule for factorial Schur functions}.
\newblock {\em Trans. Amer. Math. Soc.}, 351(11):4429--4443, 1999.

\bibitem[Oko98]{Okounkov1998}
Andrei Okounkov.
\newblock {({S}hifted) {M}acdonald polynomials: {$q$}-integral representation
  and combinatorial formula}.
\newblock {\em Compositio Math.}, 112(2):147--182, 1998.

\bibitem[OO97a]{OkounkovOlshanskiShiftedJack}
A.~Okounkov and G.~Olshanski.
\newblock {Shifted {J}ack polynomials, binomial formula, and applications}.
\newblock {\em Math. Res. Lett.}, 4(1):69--78, 1997.

\bibitem[OO97b]{OkounkovOlshanskiShiftedSchur}
A.~Okounkov and G.~Olshanski.
\newblock {Shifted Schur functions}.
\newblock {\em Algebra i Analiz}, 9(2):73--146, 1997.
\newblock (Russian); English translation, {\em St. Petersburg Math. J.}
  9(2):239--300, 1998.

\bibitem[OO98]{OkounkovOlshanski1998}
A.~Okounkov and G.~Olshanski.
\newblock {Shifted {S}chur functions. {II}. {T}he binomial formula for
  characters of classical groups and its applications}.
\newblock In {\em {Kirillov's seminar on representation theory}}, volume 181 of
  {\em {Amer. Math. Soc. Transl. Ser. 2}}, pages 245--271. Amer. Math. Soc.,
  Providence, RI, 1998.

\bibitem[PS11]{PetrulloSenatoKerov}
P.~Petrullo and D.~Senato.
\newblock {Explicit formulae for Kerov polynomials}.
\newblock {\em J. Alg. Comb.}, 33(1):141--151, 2011.

\bibitem[Rat08]{RattanStanleyTopDegree}
A.~Rattan.
\newblock {Stanley's character polynomials and coloured factorisations in the
  symmetric group}.
\newblock {\em J. Comb. Th., Series A}, 115(4):535--546, 2008.

\bibitem[Sag01]{SaganSymmetric}
B.~E. Sagan.
\newblock {\em {The symmetric group: representations, combinatorial algorithms,
  and symmetric functions}}, volume 203 of {\em {Graduate Texts in
  Mathematics}}.
\newblock Springer-Verlag, New York, second edition, 2001.

\bibitem[Ser77]{Serre}
Jean-Pierre Serre.
\newblock {\em {Linear representations of finite groups, translated from the
  second French edition by Leonard L. Scott}}, volume~42.
\newblock Springer-Verlag, 1977.

\bibitem[{\'S}ni06]{SniadyGenusExpansion}
P.~{\'S}niady.
\newblock {Asymptotics of characters of symmetric groups, genus expansion and
  free probability}.
\newblock {\em Discrete Math.}, 306(7):624--665, 2006.

\bibitem[Sta89]{Stanley1989}
R.~P. Stanley.
\newblock {Some combinatorial properties of {J}ack symmetric functions}.
\newblock {\em Adv. Math.}, 77(1):76--115, 1989.

\bibitem[Sta01]{StanleyEC2}
Richard~P. Stanley.
\newblock {\em {Enumerative Combinatorics, Volume 2}}.
\newblock Cambridge University Press, 1 edition, June 2001.

\bibitem[Sta03]{Stanley2003}
R.~P. Stanley.
\newblock {Irreducible symmetric group characters of rectangular shape}.
\newblock {\em S{\'e}m. Lothar. Combin.}, 50:Art. B50d, 11 pp. (electronic),
  2003.

\bibitem[Sta06]{Stanley-preprint2006}
R.~P. Stanley.
\newblock {A conjectured combinatorial interpretation of the normalized
  irreducible character values of the symmetric group}.
\newblock Preprint arXiv:math.CO/0606467, 2006.

\end{thebibliography}

\end{document}